\newtheorem{thm}{Theorem}[section]
\newtheorem{cor}[thm]{Corollary}
\newtheorem{lem}[thm]{Lemma}
\theoremstyle{definition}
\newtheorem{defn}[thm]{Definition}
\theoremstyle{remark}
\numberwithin{equation}{section}
\newcommand{\R}{\mathbb R}
\newcommand{\be}{\begin{equation}}
\newcommand{\ee}{\end{equation}}
\newcommand{\ep}{\eps}
\newcommand{\eps}{\varepsilon}
\newcommand{\p}{\partial}
\newcommand{\comment}[1]{}
\begin{document}

\title[Two phase problems for distributed sources]{Two-phase problems with distributed source: regularity of the free boundary}%
\author{D. De Silva}
\address{Department of Mathematics, Barnard College, Columbia University, New York, NY 10027}
\email{\tt  desilva@math.columbia.edu}
\author{F. Ferrari}
\address{Dipartimento di Matematica dell'Universit\`a di Bologna, Piazza di Porta S. Donato, 5, 40126 Bologna, Italy.}
\email{\tt fausto.ferrari@unibo.it}
\author{S. Salsa}
\address{Dipartimento di Matematica del Politecnico di Milano, Piazza Leonardo da Vinci, 32, 20133 Milano, Italy.}
\email{\tt sandro.salsa@polimi.it }

\thanks{ D.~D.~ and F.~ F.~  are supported by the ERC starting grant project 2011 EPSILON (Elliptic PDEs and Symmetry of Interfaces and Layers for Odd Nonlinearities). F.~F.~is supported by  Miur, Italy, by University of Bologna, Italy.
S.~S.~ is supported by Miur Grant, Geometric Properties of Nonlinear Diffusion Problems.  F.~F.~\ wishes to thank the Department of Mathematics of Columbia University, New York, for the  kind hospitality.}

\begin{abstract}We investigate the regularity of the free boundary for a general class of two-phase free boundary problems with non-zero right hand side. We prove that Lipschitz or flat free boundaries are $C^{1,\gamma}$. In particular, viscosity solutions are indeed classical.\end{abstract}

% ----------------------------------------------------------------
\maketitle
% ----------------------------------------------------------------

\section{Introduction and main results}

In this paper we consider two phase free boundary problems governed by
uniformly elliptic equations with distributed sources. Our purpose is to
investigate the regularity of the free boundary under additional hypotheses
such as flatness or Lipschitz continuity. A model problem we have in mind is
the following:
\begin{equation}  \label{fb}
\left\{
\begin{array}{ll}
\Delta u = f, & \hbox{in $\Omega^+(u) \cup \Omega^-(u),$} \\
\  &  \\
(u_\nu^+)^2 - (u_\nu^-)^2= 1, & \hbox{on $F(u):= \partial \Omega^+(u) \cap
\Omega.$} \\
&
\end{array}%
\right.
\end{equation}

Here, as usually for any bounded domain $\Omega \subset \mathbb{R}^n$,
\begin{equation*}
\Omega^+(u):= \{x \in \Omega : u(x)>0\}, \quad \Omega^-(u):= \{x \in \Omega
: u(x)\leq 0\}^\circ,
\end{equation*}
and $u_\nu^+$ and $u_\nu^-$ denote the normal derivatives in the inward
direction to $\Omega^+(u)$ and $\Omega^-(u)$ respectively.

Typical examples are the Prandtl-Bachelor model in fluiddynamics (see e.g.
\cite{B1,EM}), where $f=\mathbf{1}_{\Omega ^{-}\left( u\right) }$, the
characteristic function of the negative phase, or the eigenvalue problem in
magnetohydrodynamics (1,1) considered in \cite{FL}, where $f=\lambda u.$
Other examples come from limits of singular perturbation problems with
forcing term as in \cite{LW}, where the authors analyze solutions to  
\eqref{fb}, arising in the study of flame propagation with nonlocal
effects. \smallskip

The homogeneous case $f\equiv 0$ was settled in the classical works of
Caffarelli \cite{C1,C2}. A key step in these papers is the construction of a
family of continuous supconvolution deformations that act as comparison
subsolutions.

The results in \cite{C1,C2} have been widely generalized to different
classes of homogeneous elliptic problems. See for example \cite{CFS, FS1,
FS2} for linear operators, \cite{AF,F1, F2, Fe1,W1, W2} for fully nonlinear
operators and \cite{LN} for the $p$-Laplacian. All these papers follow the
guidelines of \cite{C1,C2}.

In \cite{D}, De Silva introduced a new strategy to investigate inhomogeneous
free boundary problems, motivated by a classical one
phase problem in hydrodynamic. This method has been successfully applied
in \cite{DR} to nonlocal one phase Bernoulli type problems, governed by the
fractional Laplacian. For another application of the techniques in \cite{D} see also \cite{LT}.

Here we extend the method in \cite{D} to two phase
problems to prove that flat (see below) or Lipschitz free boundaries of %
\eqref{fb} are $C^{1,\gamma}.$

In order to better emphasize the ideas involved, we first develop the regularity
theory for free boundaries of viscosity solutions to problem \eqref{fb} (see Section \ref{section2} for the
relevant definitions), and then we extend our results to a more general class of free boundary problems. For simplicity, in order to avoid the machinery of $%
L^p$-viscosity solution, we assume that $f$ is bounded in $\Omega$ and
continuous {in $\Omega^+(u) \cup \Omega^-(u)$}.  Our results may be extended to the case when
 $f$ is merely bounded measurable.
 
We remark that in view of Theorem 4.5 in \cite{CJK}, a viscosity solution to \eqref{fb} is
locally Lipschitz. In fact, as it can be easily checked, our viscosity
solutions are also weak solutions in the sense of Definition 4.4 in that
paper and both $\Delta u^{\pm} -f$ are non negative Radon measures.

We now state our first main results. Here constants depending
only on $n, \|f\|_\infty,$ and $Lip(u)$ will be called universal. 

\begin{thm}[Flatness implies $C^{1,\protect\gamma}$]
\label{flatmain1} Let $u$ be a (Lipschitz) viscosity solution to \eqref{fb}
in $B_1$. Assume that $f \in L^\infty(B_1)$ is continuous {in $B_1^+(u)
\cup B_1^-(u).$} 
There exists a universal constant $\bar \delta>0$ such that, if
\begin{equation}  \label{flat}
\{x_n \leq - \delta\} \subset B_1 \cap \{u^+(x)=0\} \subset \{x_n \leq
\delta \},
\end{equation}
with $0 \leq \delta \leq \bar \delta,$ then $F(u)$ is $C^{1,\gamma}$ in $%
B_{1/2}$.
\end{thm}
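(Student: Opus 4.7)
The plan is to follow De Silva's flatness-improvement strategy \cite{D}, suitably adapted to the two-phase setting. The key is an \emph{improvement-of-flatness} lemma: if $u$ is $\eps$-trapped between translates of the two-plane solution $U_\beta(t) := \alpha t^+ - \beta t^-$ (with $\alpha^2-\beta^2=1$, $\alpha \geq 1$) in direction $\nu$, in the sense that $U_\beta(x\cdot\nu - \eps) \leq u(x) \leq U_\beta(x\cdot\nu + \eps)$ in $B_1$, then in $B_\rho$ (for a universal small $\rho$) the same holds with $\eps$ replaced by $\rho\eps/2$ and $\nu$ replaced by some $\nu'$ with $|\nu-\nu'|\leq C\eps$, provided $\eps \leq \eps_0(\rho)$ and $\norm{f}_\infty$ is sufficiently small after rescaling. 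Iterating the lemma at dyadic scales and summing the angle changes produces a $C^{1,\gamma}$ parametrization of $F(u)$. Starting from \eqref{flat}, one first runs a preliminary step based on Lipschitz regularity and a nondegeneracy/barrier argument to ensure both phases are nontrivially present and that $u$ is in fact close in $L^\infty$ to a genuine two-plane solution $U_\beta$ with $\beta$ bounded.

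The improvement-of-flatness lemma is proved by contradiction plus compactness. Suppose there exist sequences $\eps_k \to 0$, solutions $u_k$ and parameters $\beta_k$ such that $u_k$ is $\eps_k$-trapped but no $\rho$-improvement holds for any $\nu'$. Define the rescaled differences $\tilde u_k(x) := (u_k(x) - U_{\beta_k}(x_n))/\eps_k$ on $B_1 \cap \{|x_n|\geq \eps_k\}$. The first task is a \textbf{partial Harnack inequality} for $\tilde u_k$: by constructing explicit radial sup- and infconvolution-type deformations of $U_{\beta_k}$ in each phase that serve as strict viscosity sub- and supersolutions (absorbing the $f$-term into the construction via the smallness of $\norm{f}_\infty$ relative to $\eps_k$), one shows that if $\tilde u_k$ separates from its extrema at one point, it does so in a definite neighborhood. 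Iterating this yields uniform interior H\"older estimates for $\tilde u_k$ on compact subsets of $B_1^+ \cup B_1^-$; Arzel\`a--Ascoli (combined with the trapping condition to control behavior across $\{x_n=0\}$) then produces a limit $\tilde u_0$ defined in $B_1 \setminus \{x_n=0\}$.

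The next step is to identify the \textbf{linearized problem} solved by $\tilde u_0$. Inside each phase, $\Delta \tilde u_k = f/\eps_k$; forcing $\norm{f}_\infty = o(\eps)$ along the iteration (this is why the rescaling $u_r(x)=u(rx)/r$, which multiplies $f$ by $r$, keeps the source negligible) gives $\Delta \tilde u_0 = 0$ in $B_{1/2}^\pm$. At the flat interface, plugging smooth test functions tangent to $U_{\beta_k}$ into the free boundary condition $(u_\nu^+)^2-(u_\nu^-)^2=1$ and performing a first-order expansion shows, in the viscosity sense, that $\tilde u_0$ satisfies the linear transmission condition
\[
\alpha \, (\tilde u_0)_n^+(x',0) - \beta \, (\tilde u_0)_n^-(x',0) = 0.
\]
Regularity for this two-phase transmission problem implies that $\tilde u_0$ admits a first-order Taylor expansion at $0$ of the form $\tilde u_0(x) \approx x'\cdot \xi$ with $|\xi| \leq C$. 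Pulling this approximation back to the original scale contradicts the assumed failure of the $\rho$-improvement for $k$ large, via an elementary geometric inclusion between the graph of $U_\beta$ and the graphs of nearby tilted two-plane solutions.

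The main obstacle is the correct derivation of the linearized transmission condition, since both phases interact through the free boundary relation and one must build matching radial barriers in $\{u>0\}$ and $\{u<0\}$ whose gradient-squared difference agrees with $1$ to first order while still dominating the source $f$. Tracking the quantitative interplay between $\eps$, $\rho$, and $\norm{f}_\infty$ through the iteration is precisely what forces the smallness threshold $\delta \leq \bar\delta$ in the hypothesis; once the improvement-of-flatness is established, the iteration at geometric scales and the resulting $C^{1,\gamma}$ conclusion are standard.
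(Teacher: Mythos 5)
Your outline follows the paper's strategy at a high level (De Silva-style improvement of flatness via compactness, partial Harnack, linearization to a transmission problem), but it has a genuine gap: you treat the problem as if the flatness hypothesis \eqref{flat} always places $u$ near a fixed two-plane solution $U_\beta$ with $\beta$ under control, and your normalization $\tilde u_k = (u_k - U_{\beta_k})/\eps_k$ and your iteration implicitly assume $\beta_k$ stays bounded away from degeneracy. This is false, and the paper explicitly flags it as the central difficulty: the inclusion \eqref{flat} controls the zero set of $u^+$ but says nothing about the size of $u^-$, so $u^-$ can be positive yet arbitrarily small compared with $u^+$. In that regime $u$ is \emph{not} $\eps$-close to any genuine two-plane configuration, the normalization of the negative phase by $\beta\eps$ blows up, and the smallness requirement on $f$ needed for the non-degenerate Harnack inequality ($\|f\|_\infty \lesssim \eps^2\beta$) becomes vacuous as $\beta\to 0$. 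Your ``preliminary nondegeneracy step'' cannot repair this, because no barrier argument forces $\beta$ to be bounded below under \eqref{flat} alone.

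What the paper actually does, and what your proposal misses, is a dichotomy with a transition mechanism. In the \emph{degenerate case} ($\|u^-\|_\infty \lesssim \eps^2$) only $u^+$ is normalized, the relevant Harnack inequality and improvement of flatness are proved for $u^+$ alone against the one-plane solution $U_0$ (requiring the stronger smallness $\|f\|_\infty\lesssim\eps^4$), and the linearized problem is a pure Neumann problem rather than a transmission problem. In the \emph{non-degenerate case} ($\beta$ bounded below) one proceeds essentially as you describe. The delicate point is the iteration in Theorem \ref{main_new}: one may start in the degenerate regime and find, after finitely many rescalings, that $u^-$ is no longer negligible; the paper's Lemma \ref{finalcase} handles precisely this transition by a hyperbolic rescaling $u_\eps(x)=\eps^{-1/2}u(\eps^{1/2}x)$ that converts the situation into a non-degenerate one with a new $\beta'\sim\eps^2>0$, after which the non-degenerate iteration takes over. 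Without this case analysis and the switching lemma, the induction cannot close, so your proposal as written does not yield Theorem \ref{flatmain1}.

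Two smaller points. First, the normalization you define is not quite the one that makes the partial Harnack work: the paper normalizes by $(u-\alpha x_n)/(\alpha\eps)$ in the positive phase and $(u-\beta x_n)/(\beta\eps)$ in the negative phase, a phase-dependent scaling that matches the free boundary condition at first order; a single division by $\eps$ does not linearize the transmission condition cleanly. Second, the bound you state, $\|f\|_\infty=o(\eps)$, is too weak; in the compactness argument one needs $\|f\|_\infty\leq\eps^2\beta$ (and $\eps^4$ in the degenerate case) so that both the subsolution barriers in the Harnack lemma and the passage to the harmonic limit go through.
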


Theorem \ref{flatmain1} still holds when \eqref{flat} is replaced by
other common flatness conditions (see Subsection \ref{subsection7}).

\begin{thm}[Lipschitz implies $C^{1,\protect\gamma}$]
\label{Lipmain} Let $u$ be a (Lipschitz) viscosity solution to \eqref{fb} in $B_1$, with
$0\in F(u)$. Assume that $f \in L^\infty(B_1)$ is continuous {in $B_1^+(u)
\cup B_1^-(u).$} 
 If $F(u)$ is a Lipschitz graph in a neighborhood of $0$, then $%
F(u)$ is $C^{1,\gamma}$ in a (smaller) neighborhood of $0$.
\end{thm}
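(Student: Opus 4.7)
The strategy is to reduce Theorem \ref{Lipmain} to Theorem \ref{flatmain1} via a blow-up at $0\in F(u)$. For $r>0$ set $u_r(x):=u(rx)/r$; each $u_r$ is a viscosity solution of \eqref{fb} with source $f_r(x):=rf(rx)$ and free boundary Lipschitz (with the same constant as $F(u)$ near $0$) on balls $B_{\rho/r}$. Since $\{u_r\}$ inherits the Lipschitz bound of $u$ and $\|f_r\|_\infty\le r\|f\|_\infty\to 0$, a subsequence $u_{r_k}$ converges locally uniformly to a global function $u_0$ which, by stability of the viscosity notion under uniform limits (together with the standard non-degeneracy of $u^{\pm}$, which guarantees Hausdorff convergence of the free boundaries), solves the homogeneous two-phase problem
\[
\Delta u_0=0 \text{ in } \Omega^\pm(u_0),\qquad (u_{0,\nu}^+)^2-(u_{0,\nu}^-)^2=1 \text{ on } F(u_0).
\]

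The blow-up $u_0$ has Lipschitz free boundary through the origin, in fact a Lipschitz cone with some axis $\nu$ arising from the tangent direction of $F(u)$ at $0$. By Caffarelli's classification of global Lipschitz solutions of the homogeneous two-phase problem (see \cite{C1,C2}), every such $u_0$ is necessarily a two-plane solution
\[
u_0(x)=\alpha(x\cdot\nu)^+-\beta(x\cdot\nu)^-,\qquad \alpha^2-\beta^2=1,\ \alpha\ge 1.
\]
Combined with the non-degeneracy of the $u_{r_k}^\pm$, this forces $F(u_{r_k})$ to lie in an arbitrarily thin strip around $\{x\cdot\nu=0\}$ once $k$ is large enough.

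Now fix $\bar\delta$ as the universal flatness threshold provided by Theorem \ref{flatmain1}. For $k$ sufficiently large, the strip condition \eqref{flat} holds for $u_{r_k}$ after a rotation sending $-\nu$ to $e_n$, and simultaneously $\|f_{r_k}\|_\infty\le\bar\delta$, so Theorem \ref{flatmain1} gives that $F(u_{r_k})$ is $C^{1,\gamma}$ in $B_{1/2}$. Scaling back yields the conclusion in the ball $B_{r_k/2}(0)$. The main obstacle is the classification step: one must rule out degenerate one-phase blow-ups (via non-degeneracy of both $u^+$ and $u^-$, a delicate matter in the inhomogeneous two-phase setting) and then invoke the Lipschitz rigidity theorem for the homogeneous problem to identify $u_0$. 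A secondary technical point is the Hausdorff convergence of the free boundaries of $u_{r_k}$, which, while standard, must be established carefully under the assumption that $f$ is only bounded and continuous in the open phases.
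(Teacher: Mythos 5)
Your argument is essentially the paper's proof: blow up at $0 \in F(u)$, obtain a global homogeneous two-phase solution with Lipschitz free boundary, identify it as a two-plane solution $U_\beta$ via the Liouville-type rigidity (the paper's Lemma~\ref{Liouville}, itself a consequence of Caffarelli's $C^{1,\alpha}$ regularity for the homogeneous problem plus a rescaling argument), and feed the resulting flatness into Theorem~\ref{flatmain1} (in its normalized form, Theorem~\ref{main_new}). The \emph{obstacle} you flag -- ruling out degenerate one-phase blow-ups -- is a red herring: the flatness theorem already covers $\beta=0$ through its degenerate branch, and the non-degeneracy needed for Hausdorff convergence of $F(u_{r_k})$ is supplied by Lemma~\ref{deltand}, which is stated for Lipschitz graphs and applies directly here.
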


The proof of Theorem \ref{flatmain1} is based on an
improvement of flatness, obtained via a compactness argument which linearizes the
problem into a limiting one.  The key tool is a geometric Harnack inequality
that localizes the free boundary well, and allows the rigorous passage to
the limit.

The main difficulty in the analysis  comes from
the case when $u^-$ is degenerate, that is very close to zero without being
identically zero. In this case the flatness assumption does not guarantee
closeness of $u$ to an ``optimal" (two-plane) configuration. Thus one needs
to work only with the positive phase $u^+$  to balance the situation
in which $u^+$ highly predominates over $u^-$ and the case in which $u^-$ is
not too small with respect to $u^+.$

Theorem \ref{Lipmain} follows from Theorem \ref{flatmain1} and the main
result in \cite{C1}, via a blow-up argument.

Sections \ref{section2} through \ref{section6} are devoted to the proof of the theorems above. In
particular, in Section \ref{section2} we introduce the relevant definitions and some
preliminary lemmas. In Section \ref{section3} we describe the linearized problem
associated to \eqref{fb}. Section \ref{Harnacksec} is devoted to the proof of Harnack
inequality both in the non-degenerate and in the degenerate setting. In
Section \ref{section5}, we present the proof of the improvement of flatness lemmas.
Section \ref{section6} contains the proof of the  Theorem \ref{flatmain1} and Theorem \ref{Lipmain}.

From Section \ref{section8} to Section \ref {section11} we deal with more general problems of the form 
\begin{equation}  \label{fbcv}
\left \{
\begin{array}{ll}
\mathcal{L}u = f, & \hbox{in $\Omega^+(u) \cup
\Omega^-(u),$} \\
\  &  \\
u_\nu^+=G(u^-_\nu,x), & \hbox{on $F(u):= \partial \Omega^+(u) \cap \Omega,$}
\\
&
\end{array}%
\right.
\end{equation} with $f$ bounded on $\Omega$ and continuous in $\Omega^+(u) \cup \Omega^-(u)$, and $u$ Lipschitz continuous with $Lip(u)  \leq L$. Here
\begin{equation*}
\mathcal{L}=\sum_{i,j=1}^na_{ij}(x)D_{ij} + {\bf b} \cdot \nabla, \quad a_{ij}\in C^{0,\bar\gamma}(\Omega), \:\:\: {\bf b} \in C(\Omega)\cap L^\infty(\Omega),
\end{equation*} is uniformly elliptic i.e. there exist
 $0<\lambda\leq\Lambda$ such that, for every $\xi\in\mathbb{R}^n$ and
every $x\in\Omega$,\begin{equation*}
\lambda|\xi|^2\leq \sum_{i,j=1}^na_{ij}(x)\xi_i\xi_j\leq \Lambda|\xi|^2,
\end{equation*} 
and
$$G(\eta,x):[0,\infty)\times\Omega\rightarrow(0,\infty)$$
satisfies the 
following assumptions:
\begin{itemize}
\item[(H1)]
$G(\eta,\cdot)\in C^{0,\bar\gamma}(\Omega)$ uniformly in $\eta;\quad G(\cdot,x)\in C^{1,\bar\gamma}([0,L])$ for every $x\in \Omega.$

\item[(H2)]
$G'(\cdot, x)>0$ with $G(0,x)\geq\gamma_0>0$ uniformly in $x$.
\item[(H3)]
There exists $N>0$ such that $\eta^{-N}G(\eta,x)$ is strictly decreasing in $\eta$, uniformly in $x$.
\end{itemize}
\smallskip

In this framework we prove the following  main results. Here, a constant
depending (possibly) on $n,Lip(u),\lambda,\Lambda,[a_{ij}]_{C^{0,\bar\gamma}},\|{\bf b}\|_{L^
\infty}, \|f\|_{L^\infty}, [G(\eta,\cdot)]_{C^{0,\bar\gamma}},$ $\gamma_0$ and $N$ is called universal. The $C^{1,\bar\gamma}$ norm of $G(\cdot,x)$ may depend on $x$ and enters our proofs in a qualitative way only.

\begin{thm}[Flatness implies $C^{1,\protect\gamma}$]
\label{flatmain2} Let $u$ be a Lipschitz  viscosity solution to $(\ref{fbcv})$
in $B_1,$ with $Lip(u) \leq L$. Assume that $f$ is continuous {in $B_1^+(u)
\cup B_1^-(u),$} $\|f\|_{L^\infty(B_1)} \leq L$ and $G$ satisfies $(H1)$-$(H3)$. There exists a universal
constant $\bar \delta>0$ such that, if
\begin{equation*}
\{x_n \leq - \delta\} \subset B_1 \cap \{u^+(x)=0\} \subset \{x_n \leq
\delta \},
\end{equation*}
with $0 \leq \delta \leq \bar \delta,$ then $F(u)$ is $C^{1,\gamma}$ in $
B_{1/2}$.
\end{thm}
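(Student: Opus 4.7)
The plan is to follow the scheme developed for Theorem \ref{flatmain1}, namely improvement of flatness via compactness and linearization, iterated geometrically to yield $C^{1,\gamma}$ regularity of $F(u)$. The modifications are of three types: replacing $\Delta$ by the variable-coefficient operator $\mathcal{L}$, replacing the specific condition $(u_\nu^+)^2-(u_\nu^-)^2=1$ by $u_\nu^+=G(u_\nu^-,x)$, and absorbing the $x$-dependence of $G$ via (H1). At each step I would check that the estimates depend only on the universal parameters listed above.

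\medskip

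\textbf{Geometric Harnack inequality.} As in Section \ref{Harnacksec}, I would first establish a Harnack-type inequality that localizes $F(u)$ in thin strips, in two regimes: a non-degenerate regime, where $u$ is close to a two-plane configuration $V(x)=\alpha(x_n+a)^+-\beta(x_n+a)^-$ with $\alpha=G(\beta,0)$ (so that both phases grow comparably); and a degenerate regime, where $u^-$ is very small and one argues only with $u^+$. The proofs use classical interior Harnack for $\mathcal{L}$ together with radial barriers adapted to $\mathcal{L}$, exploiting uniform ellipticity and the lower bound $G(0,x)\ge\gamma_0>0$ from (H2) for non-degeneracy of $u^+$ near $F(u)$.

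\medskip

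\textbf{Linearization and improvement of flatness.} Setting $\tilde u_\epsilon=(u-V)/\epsilon$ (after an appropriate rescaling of the working ball), I would show in the non-degenerate regime that $\tilde u_\epsilon$ converges uniformly along subsequences, as $\epsilon\to 0$, to a H\"older function $\tilde u$ solving
\begin{equation*}
\begin{cases}
\mathrm{tr}(A(0)\,D^2\tilde u)=0 & \text{in } B_{1/2}\cap\{x_n\neq 0\},\\
\tilde u^+_n=G'(\beta,0)\,\tilde u^-_n & \text{on } B_{1/2}\cap\{x_n=0\}.
\end{cases}
\end{equation*}
This is the correct analogue of the linearized two-phase problem of Section \ref{section3}: the classical case $(u_\nu^+)^2-(u_\nu^-)^2=1$ is recovered via $G'(\beta,0)=\beta/\alpha$. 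H\"older regularity of the $a_{ij}$ and of $G(\cdot,x)$ from (H1), boundedness of $\mathbf{b}$ and $f$, and the rescaling force all lower-order contributions to vanish in the limit, while (H2)--(H3) keep $G'(\beta,0)>0$ and $\beta$ (hence $\alpha$) in a compact set, preventing degeneration of the two-plane. A classical $C^{1,1/2}$-type estimate at flat points for this transmission problem then provides the flat point information needed to run the usual contradiction/compactness argument for improvement of flatness. In the degenerate regime, $u^+$ effectively solves a one-phase problem with boundary condition $u^+_\nu\approx G(0,x)\ge\gamma_0$, and the one-phase improvement of flatness applies.

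\medskip

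\textbf{Iteration and main obstacle.} Iterating the improvement of flatness, choosing at each scale the regime (degenerate or non-degenerate) that applies and the corresponding lemma, the flatness decays by a universal factor and the normal direction converges at a geometric rate, yielding $F(u)\in C^{1,\gamma}$ in $B_{1/2}$. The delicate point, as in Theorem \ref{flatmain1}, is precisely the dichotomy between the two regimes, since the flatness hypothesis alone does not control $u^-$ from below: managing this dichotomy uniformly throughout the iteration, while absorbing the $x$-dependence of $a_{ij}$, $\mathbf{b}$, $f$, and $G$ into universal constants, is the core of the argument.
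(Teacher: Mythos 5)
Your proposal follows the paper's strategy closely: Harnack inequality via barriers adapted to $\mathcal{L}$, compactness and linearization to a transmission problem, improvement of flatness, the degenerate/non-degenerate dichotomy, and geometric iteration. The transmission condition you write is correct under your normalization $\tilde u_\eps = (u-V)/\eps$; the paper instead normalizes separately on each side, $\tilde u = (u-\alpha x_n)/(\alpha\eps)$ in the positive phase and $\tilde u = (u-\beta x_n)/(\beta\eps)$ in the negative phase, which has the advantage of keeping $\tilde u$ of order one even as $\beta$ drops toward the degenerate threshold, and yields the condition $\tilde\alpha\,\tilde u_n^+ - \tilde\beta\,G_0'(\tilde\beta)\,\tilde u_n^- = 0$, which reduces to your form upon rescaling the two halves. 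It also first reduces to $a_{ij}(0)=\delta_{ij}$, so the interior equation is simply $\Delta\tilde u = 0$.

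The one point where the proposal misreads the hypotheses is the role of (H3). You group (H2)--(H3) together as keeping $\beta$ and $G'(\beta,0)$ in a compact range, but compactness of $\beta$ already follows from $\beta\le Lip(u)\le L$, and (H3) is used elsewhere and essentially. In the Harnack barrier step, after pushing the two-plane configuration by a radial barrier the unit normal tilts by $\kappa = 1+\tilde k\delta\eps$, and the free boundary condition of the candidate strict subsolution becomes $\alpha(1+c_1\eps)\kappa - G_0(\beta\kappa) - O(\eps^2)$. To show this is positive one needs the growth control $G_0(\beta\kappa) < G_0(\beta)\kappa^N$, which is exactly what (H3) (applied with $\kappa > 1$) delivers; it lets one compare the loss $G_0(\beta\kappa)-G_0(\beta) \lesssim N\tilde k\delta\eps\,G_0(\beta)$ with the gain $c_1\eps\,G_0(\beta)$ from the interior Harnack estimate, after which taking $\delta$ small closes the argument. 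Without a polynomial growth bound on $G$, the barrier need not be a strict comparison subsolution under an $O(\eps)$ tilt of the normal, and the touching argument breaks. If you actually carry out the ``radial barrier adapted to $\mathcal{L}$'' construction you will discover this, so it is not a strategic gap, but at the level of the proposal this is the one hypothesis whose purpose has been misidentified.
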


\begin{thm}[Lipschitz implies $C^{1,\protect\gamma}$]
\label{Lipmainvar} Let $u$ be a Lipschitz viscosity solution to $(\ref{fbcv})$ in $B_1$,
with $0\in F(u)$ and $Lip(u) \leq L.$ Assume that $f$ is continuous {in $B_1^+(u)
\cup B_1^-(u)$}, $\|f\|_{L^\infty(B_1)} \leq L$ and $G$ satisfies $(H1)$-$(H3)$. If $F(u)$ is a Lipschitz graph in a neighborhood of $0$, then $F(u)$ is $
C^{1,\gamma}$ in a (smaller) neighborhood of $0$.
\end{thm}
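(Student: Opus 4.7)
The plan is to deduce Theorem~\ref{Lipmainvar} from Theorem~\ref{flatmain2} by a blow-up argument, mirroring the way Theorem~\ref{Lipmain} follows from Theorem~\ref{flatmain1} via the main result of \cite{C1}. At the blow-up, the equation is driven by a constant-coefficient linear operator with zero right-hand side, and the classical Caffarelli theory---extended to linear uniformly elliptic operators as in \cite{CFS,FS1,FS2}---applies. This yields $C^{1,\gamma}$ regularity of the free boundary of the blow-up limit, which (being a cone through the origin) forces flatness, and then Theorem~\ref{flatmain2} closes the argument at a small enough scale.

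Concretely, for $r>0$ set $u_r(x):=r^{-1}u(rx)$. A direct computation shows $u_r$ is Lipschitz with $\mathrm{Lip}(u_r)\le L$, has $0\in F(u_r)$, and solves $\mathcal{L}_r u_r=r\,f(rx)$ in $B_{1/r}^+(u_r)\cup B_{1/r}^-(u_r)$ together with the rescaled free-boundary condition $(u_r)^+_\nu=G((u_r)^-_\nu,rx)$, where
\[
\mathcal{L}_r v \;=\; \sum_{i,j} a_{ij}(rx)D_{ij}v \;+\; r\sum_i b_i(rx)D_i v.
\]
By Arzel\`a--Ascoli, a subsequence $u_{r_k}$ converges locally uniformly to a Lipschitz $u_0$. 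Using the H\"older continuity of the $a_{ij}$, the boundedness of $f$ and $\mathbf{b}$, and hypothesis~(H1), we have $\mathcal{L}_{r_k}\to\mathcal{L}_0:=\sum a_{ij}(0)D_{ij}$ uniformly on compacta, $r_k f(r_kx)\to 0$ in $L^\infty_{\mathrm{loc}}$, and $G(\cdot,r_kx)\to G_0(\cdot):=G(\cdot,0)$ uniformly in $\eta\in[0,L]$. A standard stability argument for viscosity solutions (see Section~\ref{section2}) then shows $u_0$ is a global viscosity solution of the \emph{homogeneous} problem $\mathcal{L}_0 u_0=0$ in $\{u_0>0\}\cup\{u_0\le 0\}^\circ$ with $(u_0)^+_\nu=G_0((u_0)^-_\nu)$ on $F(u_0)$. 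Because $F(u)=\{x_n=\varphi(x')\}$ near $0$ for some Lipschitz $\varphi$ with $\varphi(0)=0$, the rescaled graphs $F(u_{r_k})=\{x_n=r_k^{-1}\varphi(r_kx')\}$ are equi-Lipschitz and converge to a globally Lipschitz graph $F(u_0)=\{x_n=\varphi_0(x')\}$ on $\mathbb{R}^{n-1}$.

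The constant-coefficient linear analogue of the main result of \cite{C1}, proved in \cite{CFS,FS1,FS2}, applied to $u_0$ gives that $F(u_0)$ is of class $C^{1,\gamma}$ in a neighborhood of the origin; in particular $\varphi_0$ is differentiable at $0$. After rotating coordinates so that $\nabla\varphi_0(0)=0$, for every $\varepsilon>0$ we may choose $\rho>0$ with
\[
\{x_n\le -\varepsilon\rho\}\cap B_\rho \;\subset\; B_\rho\cap\{u_0^+=0\} \;\subset\; \{x_n\le \varepsilon\rho\}\cap B_\rho.
\]
The locally uniform convergence $u_{r_k}\to u_0$ (together with the Hausdorff convergence of free boundaries that it entails, compare Section~\ref{section2}) yields the same inclusion for $u_{r_k}$, with $2\varepsilon$ in place of $\varepsilon$, for every $k$ large enough. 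Setting $\tilde u(x):=\rho^{-1}u_{r_k}(\rho x)$, a final rescaling produces a viscosity solution of the (rotated) problem \eqref{fbcv} in $B_1$ satisfying the flatness hypothesis of Theorem~\ref{flatmain2} with $\delta=2\varepsilon$; choosing $\varepsilon\le\bar\delta/2$, Theorem~\ref{flatmain2} gives $F(\tilde u)\in C^{1,\gamma}(B_{1/2})$, and unscaling yields $F(u)\in C^{1,\gamma}(B_{r_k\rho/2})$, proving the theorem.

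The main obstacle I foresee is the \emph{degenerate} regime $u_0^-\equiv 0$: in that case the blow-up is only a one-phase solution, and one must invoke the one-phase analogue of Caffarelli's Lipschitz-implies-$C^{1,\gamma}$ theorem in Step~3, while the two-phase stability of the free-boundary condition under the limit $u_{r_k}\to u_0$ must be handled with the care developed in Section~\ref{section2} to ensure $u_0$ is a \emph{bona fide} viscosity solution of the expected (possibly one-phase) free-boundary condition. Everything else---the passage of the operators to the limit, the Hausdorff convergence of the graphs $\varphi_{r_k}$, and the scale-unwinding at the end---is essentially bookkeeping.
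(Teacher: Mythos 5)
Your proposal is correct and follows essentially the same blow-up scheme as the paper. The only (cosmetic) difference is that in Step~3 you invoke the local $C^{1,\gamma}$ regularity of $F(u_0)$ at the origin (from the Caffarelli theory and its linear-operator extensions) and then transfer the resulting flatness back to $u_{r_k}$ by a final rescaling, whereas the paper packages this same input into its Liouville-type Lemma~\ref{Liouville}, concluding that the blow-up is exactly a two-plane solution $U_\beta$ (with $\beta\ge 0$, which already covers the degenerate one-phase case you worry about at the end) and then reads the flatness hypothesis off the uniform convergence $u_{r_k}\to U_\beta$ directly. Both variants rest on the same two ingredients --- the compactness/stability lemma of Section~\ref{section8} to pass the equation and the free-boundary condition to the blow-up limit, and the classical Lipschitz-implies-$C^{1,\gamma}$ theorem for the resulting homogeneous constant-coefficient problem --- and neither adds or removes a genuine step relative to the other.
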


Some remarks are in order. In particular,  further extensions can be achieved with small extra effort.  Actually, there is no problem to extend our results to the case when ${\bf b}$ and $f$ are merely bounded measurable, however as we already mentioned in the prototype problem we wish to avoid too many technicalities.

 In Theorems \ref{flatmain2} and \ref{Lipmainvar} we need to assume the Lipschitz continuity of our solution
unless the operator can be put into divergence form. Indeed, in this case an
almost monotonicity formula is available (see \cite{MP}) and under the assumption $G(\eta,x)\rightarrow \infty, \text{ as } \eta\rightarrow \infty$ one can
reproduce the proof of Theorem 4.5 in \cite{CJK}, to recover the Lipschitz
continuity of a viscosity solution. Observe that then $f=f(x,u,\nabla u)$ is allowed, with $f(x,\cdot,\cdot)$ locally bounded.

\section{Compactness and localization lemmas}\label{section2}

In this section, we state basic definitions and we prove some elementary lemmas. First we need the following standard notion.

\begin{defn}Given $u, \varphi \in C(\Omega)$, we say that $\varphi$
touches $u$ by below (resp. above) at $x_0 \in \Omega$ if $u(x_0)=
\varphi(x_0),$ and
$$u(x) \geq \varphi(x) \quad (\text{resp. $u(x) \leq
\varphi(x)$}) \quad \text{in a neighborhood $O$ of $x_0$.}$$ If
this inequality is strict in $O \setminus \{x_0\}$, we say that
$\varphi$ touches $u$ strictly by below (resp. above).
\end{defn}

We refer to the usual $C$-viscosity definition of subsolution, supersolution and solution of an elliptic PDE, see e.g. \cite{CC}. 
Let us  introduce the
notion of comparison subsolution/supersolution.

\begin{defn}\label{defsub}
We say that $v \in C(\Omega)$ is a strict (comparison) subsolution (resp.
supersolution) to (\ref{fb}) in $\Omega$, if and only if $v \in C^2(\overline{\Omega^+(v) }) \cap  C^2(\overline{\Omega^-(v) })$ and the
following conditions are satisfied:
\begin{enumerate}
\item $ \Delta v  > f $ (resp. $< f $) in $\Omega^+(v) \cup \Omega^-(v)$;
\item If $x_0 \in F(v)$, then $$(v_\nu^+)^2 - (v_\nu^-)^2 >1\quad (\text{resp. $(v_\nu^+)^2 - (v_\nu^-)^2 <1$, $v_\nu^+(x_0) \neq 0$).}$$
\end{enumerate}

\end{defn}

Notice that by the implicit function theorem, according to our definition the free boundary of a comparison subsolution/supersolution is $C^2$.

Finally we can give the definition of viscosity solution to the problem (\ref{fb}).

\begin{defn}\label{defnhsol} Let $u$ be a continuous function in
$\Omega$. We say that $u$ is a viscosity solution to (\ref{fb}) in
$\Omega$, if the following conditions are satisfied:
\begin{enumerate}
\item $ \Delta u = f$ in $\Omega^+(u) \cup \Omega^-(u)$ in the
viscosity sense;
\item  Any (strict) comparison subsolution $v$ (resp. supersolution) cannot touch $u$ by below (resp. by above) at a point $x_0 \in F(v) $ (resp. $F(u)$.)
\end{enumerate}
\end{defn}

The next lemma shows that ``$\delta-$flat" viscosity solutions (in the sense of our main Theorem \ref{flatmain1}) enjoy non-degeneracy of the positive part $\delta$-away from the free boundary.
Precisely,
\begin{lem}\label{deltand}Let $u$ be a solution to \eqref{fb} in $B_2$ with  $Lip(u) \leq L$ and $\|f\|_{L^\infty} \leq L$. If
$$\{x_n \leq g(x') - \delta\} \subset \{u^+=0\} \subset \{x_n \leq g(x') + \delta\},$$ with $g$ a Lipschitz function, $Lip(g) \leq L, g(0)=0$,
then $$u(x) \geq c_0 (x_n- g(x')), \quad x \in \{x_n \geq g(x') + 2\delta\}\cap B_{\rho_0}, $$ for some $c_0, \rho_0 >0$ depending on $n,L$ as long as  $\delta \leq c_0.$
\end{lem}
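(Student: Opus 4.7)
After horizontally translating, assume $x_0'=0$ and $g(0)=0$; set $d:=x_{0,n}\ge 2\delta$. The goal is to show $u(de_n)\ge c_0 d$. The plan is to build a strict comparison subsolution $\phi$ (in the sense of Definition \ref{defsub}), to slide it continuously into position, and to use the definition of viscosity solution (Definition \ref{defnhsol}) to preclude every possible contact, thus forcing $\phi\le u$ on a universal ball around $x_0$.

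Concretely, I would construct a two-sided radial barrier with a smooth spherical free boundary $\Sigma=\partial B_R(\bar z)$. Choose a sphere of universal radius $R>0$ centered on the $e_n$-axis at $\bar z$ so that $\Sigma$ lies just above the flatness strip $\{|x_n-g(x')|\le\delta\}$ and $x_0=de_n$ sits inside $B_R(\bar z)$ at a distance of order $d$ from $\Sigma$. In $\Omega^+(\phi)$ (the part of $B_R(\bar z)$ where $\phi>0$) take a strictly subharmonic radial profile of the type $\phi(x)=c\bigl(|x-\bar z|^{-\alpha}-R^{-\alpha}\bigr)$ with $\alpha>n-2$ (on an annulus, extended suitably near the center), so that $\Delta\phi>\|f\|_{L^\infty}$. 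In $\Omega^-(\phi)$ (outside $B_R(\bar z)$) take a mildly negative quadratic extension with $\Delta\phi>-\|f\|_{L^\infty}$. Calibrate the constants so that $(\phi^+_\nu)^2-(\phi^-_\nu)^2>1$ on $\Sigma$, making $\phi$ a strict comparison subsolution. A direct computation then yields $\phi(x_0)\ge c_0 d$ for a universal $c_0>0$.

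The comparison $\phi\le u$ on $\overline{B_{\rho_0}}$ is then established by sliding $\phi_t(x):=\phi(x-te_n)$ from $t$ large (where $\phi_t\le u$ trivially on $\overline{B_{\rho_0}}$, since $\phi_t^+$ has been lifted out of $B_{\rho_0}$ while the Lipschitz bound $|u|\le L\operatorname{dist}(\cdot,F(u))$ dominates $\phi_t^-$) down to $t=0$. Let $t_*$ be the first critical value at which a contact point $p\in\overline{B_{\rho_0}}$ between $\phi_{t_*}$ and $u$ appears. Three cases are ruled out: (a) interior contact $p\in\Omega^+(\phi_{t_*})\cup\Omega^-(\phi_{t_*})$ contradicts the strong maximum principle, since $\Delta(u-\phi_{t_*})=f-\Delta\phi_{t_*}<0$ there; (b) contact $p\in F(\phi_{t_*})$ is forbidden by condition (2) of Definition \ref{defnhsol}, because $\phi_{t_*}$ is a strict comparison subsolution; (c) boundary contact $p\in\partial B_{\rho_0}$ is excluded by choosing $\rho_0$ small enough (depending on $L$) that $\phi_t<u$ uniformly on $\partial B_{\rho_0}$ along the sliding, using Lipschitz continuity of $u$ and non-degeneracy of $\phi$ on this sphere. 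Hence $t_*=0$, so $\phi\le u$ on $\overline{B_{\rho_0}}$, and evaluating at $x_0$ gives $u(x_0)\ge\phi(x_0)\ge c_0 d$.

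The main obstacle is the geometric placement of $\Sigma$: since $g$ is only Lipschitz, one cannot fit an interior tangent ball at a general boundary point of the Lipschitz region $\{x_n>g(x')+\delta\}$, and one must instead thread $B_R(\bar z)$ inside this region by exploiting the interior cone condition, forcing $R$ and $\rho_0$ to scale with $L$. Throughout the sliding one must also verify the geometric inclusion $\Omega^+(\phi_t)\cap B_{\rho_0}\subset\overline{\Omega^+(u)}$ (otherwise $\phi_t>0\ge u$ at some point, immediately breaking the comparison); the hypothesis $\delta\le c_0$ provides the transversality margin that makes this inclusion quantitatively stable along the sliding.
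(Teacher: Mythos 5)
Your proposal takes a genuinely different route from the paper. The paper does not build a two--phase comparison subsolution at all: it constructs a purely radial \emph{strict supersolution} $w$ of $\Delta w<-\|f\|_\infty$ on an annulus, slides it down from above until the first contact with $u$, observes that the contact point $z$ must land on a level set where $u(z)=\eta>0$ (the supersolution property rules out interior contact in each phase, and the geometry of $w$ rules out contact on $F(u)$), and then transports the lower bound $u(z)\ge\eta$ to $e_n$ via a Harnack chain of balls contained in $\{u>0\}$. The free--boundary condition of Definition~\ref{defsub} plays no role; only the PDE, the Lipschitz bound and the flatness are used. Your scheme, by contrast, hinges on verifying $(\phi_\nu^+)^2-(\phi_\nu^-)^2>1$ and on case~(b) of your sliding argument, so it is strictly more machinery-heavy.

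More importantly, your version has a genuine gap in the geometric placement, which you flag but do not resolve, and it is not a detail that ``scaling $R,\rho_0$ with $L$'' fixes. With $\bar z$ on the $e_n$-axis, for $\Sigma=\partial B_R(\bar z)$ to clear the Lipschitz subgraph $\{x_n\le g(x')+\delta\}$ one needs the south pole of $\Sigma$ to sit at height roughly $\delta+\tfrac{1}{2}RL^2$. If $R$ is a fixed universal constant, then for $x_0=de_n$ with $d$ close to its minimum value $2\delta$ and $\delta$ small, $x_0$ lies \emph{below} the south pole and is not even inside $B_R(\bar z)$; if instead $R\sim d$, the same computation forces $d\gtrsim\delta(1+L^2)$, which fails once $L\ge 1$. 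The paper escapes exactly this trap by its first two sentences: it proves the claim only for $x_n\ge g(x')+C\delta$ with a possibly large universal $C$, and after rescaling $x\mapsto x/d$ works at unit scale (so the barrier has size $O(1)$, the rescaled source has small $L^\infty$-norm, and $\delta/d\le 1/C$ is as flat as one wishes). The range $2\delta\le x_n-g(x')< C\delta$ is then recovered by a Harnack chain, not by the barrier. Your plan omits both the dichotomy $d\gtrless C\delta$ and the rescaling step, and without them the stated construction cannot be carried out.

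A smaller slip: in $\Omega^-(\phi)$ Definition~\ref{defsub} requires $\Delta\phi>f$, hence (not knowing the sign of $f$) you should impose $\Delta\phi>\|f\|_{\infty}$ there, not $\Delta\phi>-\|f\|_{\infty}$ as you wrote; and with $\phi^-_\nu$ forced small by the free--boundary condition while $\Delta\phi^->-\|f\|_\infty$ must fail (indeed $\Delta\phi^-<-\|f\|_\infty$), the negative phase of your barrier is constrained to a thin shell, which compounds the difficulty of choosing the comparison domain so that no contact occurs on $\partial B_{\rho_0}$.
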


\begin{proof} All constants in this proof will depend on $n,L.$

It suffices to show that our statement holds for $ \{x_n \geq g(x') + C\delta\}$ for a possibly large constant $C$. Then one can apply Harnack inequality to obtain the full statement. 

We prove the statement above at $x=de_n$ (recall that $g(0)=0$). Precisely, we want to show that $$u(de_n) \geq c_0 d, \quad d \geq C \delta.$$After rescaling, we reduce to proving that $$u(e_n) \geq c_0$$ as long as $\delta \leq 1/C$, and $\|f\|_\infty$ is sufficiently small.
Let $\gamma>0$ and $$w(x)= \frac{1}{2\gamma}(1-|x|^{-\gamma})$$ be defined on the closure of the annulus $B_2 \setminus \overline B_1$ with  $\|f\|_\infty$ small enough so that
$$\Delta w <- \|f\|\quad \textrm{on  $B_2\setminus  \overline{B}_{1}$}.$$ Let $$w_t(x) = w(x+te_n).$$ Notice that $$|\nabla w_0| < 1\quad \textrm{on $\p B_{1}.$}$$

From our flatness assumption for $t>,0$ sufficiently large (depending on the Lipschitz constant of $g$), $w_t$ is strictly above $u$.  We decrease $t$ and let $\bar t$ be the first $t$ such that $w_t$ touches $u$ by above. Since $w_{\bar t}$ is a strict supersolution to $\Delta u=f$ in 
$B_2\setminus\bar{B}_1$ the touching point $z$ can occur only on the 
$\eta:=\frac{1}{2\gamma}(1-2^{-\gamma}) $ level set in the positve phase of $u$, and $|z|\leq C = C(L).$  Since $u$ is  Lipschitz continuous, $0< u (z) = \eta \leq L d(z, F(u))$, that is a full ball around $z$ of radius  $\eta/L$ is contained in the positive phase of $u$. Thus, for $\bar \delta$ small depending on $\eta, L$ we have that $B_{\eta/2L}(z) \subset \{x_n \geq  g(x') + 2 \bar \delta\}$. Since $x_n =g(x') + 2 \bar \delta$ is Lipschitz we can connect $e_n$ and $z$ with a chain of intersecting balls included in the positive side of $u$ with radii comparable to $\eta/2L$.  The number of balls depends on $L$ . Then we can apply Harnack inequality and obtain $$u(e_n) \geq c u(z)= c_0,$$ as desired.
\end{proof}

Next, we state a compactness lemma. For its proof, we refer the reader to Section \ref{section8} where the analogue of this result for a more general class of operators and free boundary conditions is stated and proved (see Lemma \ref{compvar}).

\begin{lem} \label{compact_delta}Let $u_k$ be a sequence of viscosity solutions to \eqref{fb}  with right-hand-side $f_k$ satisfying  $\|f_k\|_{L^\infty} \leq L.$ Assume $u_k \to u^*$ uniformly on compact sets, and $\{u_k^+=0\} \to \{(u^*)^+=0\}$ in the Hausdorff distance. Then $$-L \leq \Delta u^* \leq L, \quad \text{in $\Omega^+(u^*) \cup \Omega^-(u^*)$} $$ in the viscosity sense and $u^*$ satisfies the free boundary condition  $$ ({u^*_\nu}^+)^2 - ({u^*_\nu}^-)^2= 1  \quad \hbox{on $F(u^*)$}$$ in the viscosity sense of Definition $\ref{defnhsol}.$
\end{lem}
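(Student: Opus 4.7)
The plan is to establish the two conclusions separately: the interior inequalities on $\Delta u^*$ follow from the standard viscosity stability argument, while the free boundary condition requires a sliding contradiction combined with the Hausdorff convergence of the positivity sets.

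For the bulk inequality, I fix $x_0 \in \Omega^+(u^*) \cup \Omega^-(u^*)$ and let $\varphi \in C^2$ touch $u^*$ strictly from above at $x_0$. If $x_0 \in \Omega^+(u^*)$, continuity of $u^*$ and uniform convergence place a small ball around $x_0$ inside $\Omega^+(u_k)$ for $k$ large; if $x_0 \in \Omega^-(u^*)$, the Hausdorff convergence $\{u_k^+ = 0\} \to \{(u^*)^+ = 0\}$ supplies the same fact. The usual perturbation argument then produces contact points $x_k \to x_0$ between $u_k$ and $\varphi + c_k$ with $c_k \to 0$, lying in the appropriate phase of $u_k$, and the $C$-viscosity inequality $\Delta \varphi(x_k) \geq f_k(x_k) \geq -L$ passes to the limit. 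Touching from below, symmetrically, yields $\Delta \varphi(x_0) \leq L$.

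For the free boundary condition I argue by contradiction. Suppose a strict comparison subsolution $v$ (in the sense of Definition \ref{defsub}, with $\Delta v > L$ so as to exceed every $f_k$) touches $u^*$ from below at $x_0 \in F(v)$; the supersolution case is symmetric. After a small smooth perturbation that preserves both strict inequalities of Definition \ref{defsub}, I may assume $v < u^*$ on $\overline{B_\rho(x_0)}\setminus\{x_0\}$ with $u^* - v \geq \delta > 0$ on $\partial B_\rho(x_0)$. Setting
\[
s_k := \min_{\overline{B_\rho(x_0)}} (u_k - v), \qquad w_k := v + s_k,
\]
uniform convergence gives $|s_k| \leq \|u_k - u^*\|_\infty \to 0$, and the strict touching on $\partial B_\rho(x_0)$ pushes the minimizer $y_k$ into the open ball with $y_k \to x_0$, so that $w_k \leq u_k$ in $B_\rho(x_0)$ with equality at $y_k$.

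The crux is that $w_k$ remains itself a strict comparison subsolution of \eqref{fb} with right-hand side $f_k$: the bulk inequality $\Delta w_k = \Delta v > L \geq f_k$ is preserved by the vertical translation, while the shifted free boundary $F(w_k) = \{v = -s_k\}$ is a $C^2$ perturbation of $F(v)$ near $y_k$ on which $((w_k)^+_\nu)^2 - ((w_k)^-_\nu)^2 > 1$ by continuity of $\nabla v$ up to $F(v)$ from each phase. If $y_k \in F(w_k)$, the viscosity property of $u_k$ is contradicted directly. Otherwise $y_k$ lies in the interior of $\Omega^\pm(w_k)$; the sign of $w_k(y_k)$ together with $u_k(y_k) = w_k(y_k)$ and continuity place $y_k$ in the corresponding phase of $u_k$, and the interior $C^2$ touching from below contradicts $\Delta u_k = f_k$ in the viscosity sense, since $\Delta w_k > f_k$ strictly. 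The main obstacle is sustaining both strict inequalities under the vertical shift, which requires smoothness of $\{v = -s_k\}$ near $y_k$ (using $v^+_\nu > 1$ so that $\nabla v \neq 0$ near $F(v)$ from the positive side) and the continuity of $\nabla v$ up to $F(v)$ from each side in order to transfer the free boundary inequality to the shifted surface; a secondary technical point is the initial smooth perturbation that makes the touching strict while preserving the inequalities in Definition \ref{defsub}.
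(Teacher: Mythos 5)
Your interior argument is fine and matches the paper's (the paper proves the analogous Lemma~\ref{compvar} in Section~\ref{section8}): perturb to strict touching, pull the contact point into the correct phase of $u_k$ using uniform convergence and the Hausdorff convergence of positivity sets, and pass to the limit.

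The free-boundary part, however, has a fatal gap: the vertically translated function $w_k = v + s_k$ is \emph{not} a strict comparison subsolution of \eqref{fb} when $s_k \neq 0$. Its free boundary $F(w_k) = \{v = -s_k\}$ sits entirely inside one phase of $v$ (inside $\Omega^-(v)$ if $s_k>0$, inside $\Omega^+(v)$ if $s_k<0$), where $v$ is $C^2$; consequently $w_k$ crosses its own zero level set smoothly, and its two one-sided normal derivatives at $F(w_k)$ are identical, namely $|\nabla v|$ evaluated there. Hence
$$\bigl((w_k)^+_\nu\bigr)^2 - \bigl((w_k)^-_\nu\bigr)^2 = 0 \not> 1,$$
so the free-boundary inequality of Definition~\ref{defsub}(ii) simply fails. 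The claim that ``continuity of $\nabla v$ up to $F(v)$ from each phase'' transfers the inequality to the shifted surface confuses the two traces of $\nabla v$ on $F(v)$ with the (single, coincident) traces of $\nabla w_k$ on $F(w_k)$; the gradient jump of a two-phase comparison function lives only at $F(v)$ and is destroyed by a vertical shift. This is visible already on the model subsolution $v = 2x_n^+ - x_n^-$: for $s>0$ the function $v+s$ has free boundary $\{x_n=-s\}$ with equal one-sided normal derivatives $1$, so the jump condition $(w^+_\nu)^2-(w^-_\nu)^2>1$ cannot hold. Because of this, the case $y_k \in F(w_k)$ yields no contradiction and the proof collapses.

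The fix, which is what the paper does in Lemma~\ref{compvar}, is to slide $v$ \emph{horizontally} in the normal direction: after arranging that the free boundaries $F(v)$ and $F(u^*)$ touch strictly (by a preliminary perturbation of the type $\bar v(x) = v(x+\eta|x'-\bar x'|^2 e_n)+\eta\,\mathrm{dist}(x,F(v))-\mathrm{dist}(x,F(v))^2$), one finds $c_k\to 0$ so that $v(x+c_k e_n)$ touches $u_k$ at $x_k \to \bar x$. A horizontal translation preserves both the interior strict inequality and the gradient jump along the translated free boundary, so the translated function is again a strict comparison sub/supersolution, and one splits into the two cases $x_k\in\Omega^{\pm}(u_k)$ (interior contradiction) or $x_k\in F(u_k)$ (free-boundary contradiction), both of which now genuinely contradict Definition~\ref{defnhsol}.
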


We are now ready to re-formulate our main Theorem \ref{flatmain1} using the two lemmas above. First, we denote by $U_\beta$ the following one-dimensional function, $$U_\beta(t) = \alpha t^+ - \beta t^-, \quad  \beta \geq 0, \quad \alpha = \sqrt{1 +\beta^2},$$ where $$t^+ = \max\{t,0\}, \quad t^-= -\min\{t,0\}.$$Then $U_\beta(x)= U_\beta(x_n)$ is the so-called two-plane solution to \eqref{fb} when $f \equiv 0$.

\begin{lem}\label{normalize} Let $u$ be a solution to \eqref{fb} in $B_1$ with  $Lip(u) \leq L$ and $\|f\|_{L^\infty} \leq L$. For any $\ep >0$ there exist $\bar \delta, \bar r >0$ depending on $\ep, n,$ and  $L$ such that if \begin{equation*} \{x_n \leq - \delta\} \subset B_1 \cap \{u^+(x)=0\} \subset \{x_n \leq \delta \},\end{equation*} with $0 \leq \delta \leq \bar \delta,$ then
\be\label{conclusion_beta}\|u - U_{\beta}\|_{L^{\infty}(B_{\bar r})} \leq \eps \bar r\ee for some $0 \leq \beta \leq L.$

\end{lem}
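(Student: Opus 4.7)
The plan is to argue by contradiction via a rescaling and compactness argument, extracting an entire limit that must coincide with a two-plane solution. Assume the conclusion fails: then there exists $\eps_0>0$ such that for every $j\in\mathbb{N}$, choosing the candidate radii $\bar r_j=1/j$ and $\bar\delta_j=1/j^2$, one finds a solution $u_j$ of \eqref{fb} in $B_1$ with $Lip(u_j)\leq L$, $\|f_j\|_{L^\infty}\leq L$, satisfying
$$\{x_n\leq-\delta_j\}\subset B_1\cap\{u_j^+=0\}\subset\{x_n\leq\delta_j\},\qquad \delta_j\leq 1/j^2,$$
while $\|u_j-U_\beta\|_{L^\infty(B_{1/j})}>\eps_0/j$ for every $\beta\in[0,L]$. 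The crucial choice $\bar\delta_j\ll\bar r_j$ ensures that, after rescaling, the flatness parameter vanishes much faster than the working radius. I would then set $\tilde u_j(x):=j\,u_j(x/j)$ on $B_j$; a direct computation gives $Lip(\tilde u_j)\leq L$, $\|\Delta\tilde u_j\|_{L^\infty}\leq L/j\to 0$, the flatness $\{x_n\leq-j\delta_j\}\subset\{\tilde u_j^+=0\}\subset\{x_n\leq j\delta_j\}$ with $j\delta_j\leq 1/j\to 0$, and the free boundary condition $(\tilde u_{j\nu}^+)^2-(\tilde u_{j\nu}^-)^2=1$ is preserved because $\nabla\tilde u_j(x)=\nabla u_j(x/j)$.

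By Arzel\`a--Ascoli a subsequence of $\{\tilde u_j\}$ converges locally uniformly on $\mathbb{R}^n$ to a Lipschitz function $\tilde u^*$. Lemma \ref{deltand} applied to $u_j$ (valid for large $j$ since $\delta_j\to 0$), translated into rescaled coordinates, gives $\tilde u_j(y)\geq c_0 y_n$ whenever $y_n\geq 2j\delta_j$ and $|y|\leq j\rho_0$; since $2j\delta_j\to 0$ and $j\rho_0\to\infty$, this yields $\tilde u^*(y)\geq c_0 y_n$ on all of $\{y_n>0\}$. Combined with the upper flatness this forces $\{(\tilde u^*)^+=0\}=\{x_n\leq 0\}$ and moreover $\{\tilde u_j^+=0\}\to\{x_n\leq 0\}$ in Hausdorff distance on every compact set. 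The compactness Lemma \ref{compact_delta} then produces $\Delta\tilde u^*=0$ (in the viscosity sense) in each phase, together with the free boundary condition $(\tilde u^{*+}_\nu)^2-(\tilde u^{*-}_\nu)^2=1$ on $F(\tilde u^*)=\{x_n=0\}$.

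Finally I would classify $\tilde u^*$. Since $\tilde u^{*+}$ is a nonnegative Lipschitz harmonic function on $\{x_n>0\}$ vanishing on $\{x_n=0\}$, its odd Schwarz reflection across the hyperplane is harmonic on $\mathbb{R}^n$ with at most linear growth, hence linear by Liouville; vanishing on $\{x_n=0\}$ forces $\tilde u^{*+}(x)=\alpha x_n^+$ with $\alpha\in(0,L]$ (positivity from non-degeneracy). The same argument applied to $\tilde u^{*-}$ gives $\tilde u^{*-}(x)=\beta x_n^-$ with $\beta\in[0,L]$, and the viscosity free boundary condition forces $\alpha^2-\beta^2=1$, so $\tilde u^*\equiv U_\beta$. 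Using the scaling identity $\|u_j-U_\beta\|_{L^\infty(B_{1/j})}=(1/j)\|\tilde u_j-U_\beta\|_{L^\infty(B_1)}$ with this particular $\beta$, the contradiction hypothesis becomes $\|\tilde u_j-U_\beta\|_{L^\infty(B_1)}>\eps_0$ for every $j$, contradicting $\tilde u_j\to U_\beta$ uniformly on $B_1$. The main technical point will be passing the free boundary condition to the limit, which is precisely the content of Lemma \ref{compact_delta}; the Liouville-type classification of $\tilde u^*$ is then entirely standard.
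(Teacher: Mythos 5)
Your proof is correct, but it takes a genuinely different route from the paper's. The paper does not rescale at all: it fixes $\eps$, lets $\delta_k\to 0$ with the ball radius held at $B_1$, extracts a limit $u^*$ by compactness, and argues that $u^*$ solves a transmission problem ($|\Delta u^*|\le L$ on each side of $\{x_n=0\}$ with the free boundary condition on the interface), whose $C^{1,\gamma}$ regularity up to $\{x_n=0\}$ yields a Taylor expansion $\|u^*-U_\beta\|_{L^\infty(B_{\bar r})}\le C\bar r^{1+\gamma}$; the radius $\bar r$ is then chosen at the end so that $C\bar r^{1+\gamma}\le\tfrac{\eps}{2}\bar r$, and the contradiction is transferred to $u_k$ by uniform convergence. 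You instead tie $\bar r_j$ and $\bar\delta_j$ together \emph{a priori} with the crucial condition $\bar\delta_j\ll\bar r_j$, blow up at scale $1/\bar r_j$ so that the right-hand side $\|\Delta\tilde u_j\|\le L/j\to 0$ vanishes in the limit, and obtain an \emph{entire} two-phase harmonic function which you classify by Schwarz reflection and Liouville as $U_\beta$. What each approach buys: the paper's version keeps $\bar r$ free until the very end (no constraint on the relationship between $\bar\delta$ and $\bar r$ beyond what the final choice imposes), at the cost of invoking $C^{1,\gamma}$ regularity for the fixed-boundary nonlinear transmission problem, which the paper asserts without a detailed proof; your version replaces that regularity input by an elementary Liouville classification, at the cost of the extra rescaling bookkeeping and of hard-wiring $\bar\delta_j=1/j^2$, $\bar r_j=1/j$ into the contradiction hypothesis (which is permissible: the negation of the statement allows you to choose the failing pair $(\bar\delta,\bar r)$). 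The only steps you should spell out more carefully are (i) that $\tilde u^*\le 0$ on $\{x_n\le 0\}$ (which follows from $\{x_n\le -j\delta_j\}\subset\{\tilde u_j^+=0\}$ and $j\delta_j\to 0$), so that $\{\tilde u^*\le 0\}^\circ=\{x_n<0\}$ and the Schwarz reflection applies to $\tilde u^{*-}$ as well, and (ii) that the viscosity free boundary condition for $\tilde u^*=\alpha x_n^+-\beta x_n^-$ along the full hyperplane $\{x_n=0\}$ indeed forces $\alpha^2-\beta^2=1$ (which one gets by testing with the quadratic perturbations $\alpha' x_n^+-\beta' x_n^-\pm c x_n^2$). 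Both points are routine, and the overall argument is sound.
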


\begin{proof} Given $\eps>0$ and $\bar r$ depending on $\eps$ to be specified later,
assume by contradiction that there exist a sequence $\delta_k \to 0$ and a sequence of solutions $u_k$ to the problem \eqref{fb} with right-hand-side $f_k$ such that $Lip(u_k), \|f_k\| \leq L$ and \begin{equation}\label{trap} \{x_n \leq - \delta_k\} \subset B_1 \cap \{u_k^+(x)=0\} \subset \{x_n \leq \delta_k \},\end{equation} but the $u_k$ do not satisfy the conclusion \eqref{conclusion_beta}.

Then, up to a subsequence, the $u_k$ converge uniformly on compacts to a function $u^*$. In view of \eqref{trap} and the non-degeneracy  of $u_k^+$ $2\delta_k$-away from the free boundary (Lemma \ref{deltand}), we can apply our compactness lemma and conclude that $$-L \leq \Delta u^* \leq L, \quad \text{in $B_{1/2} \cap \{x_n \neq 0\}$}$$ in the viscosity sense and also  \be\label{FBu*} ({u^*_n}^+)^2 - ({u^*_n}^-)^2= 1  \quad \hbox{on $F(u^*)=B_{1/2} \cap \{x_n=0\},$}\ee with $$u^* >0 \quad \text{in $B_{\rho_0} \cap \{x_n >0\}$}.$$
Thus, $$u^* \in C^{1,\gamma}(B_{1/2} \cap \{x_n \geq 0\}) \cap C^{1,\gamma}(B_{1/2} \cap \{x_n \leq 0\})$$ for all $\gamma$ and in view of \eqref{FBu*} we have that (for any $\bar r$ small)
$$\|u^* - (\alpha x_n^+ - \beta x_n^- )\|_{L^\infty(B_{\bar r})} \leq C(n,L) \bar r^{1+\gamma}$$
with $\alpha^2=1+\beta^2.$ If $\bar r$ is chosen depending on $\eps$ so that $$ C(n,L) \bar r^{1+\gamma} \leq \frac{\eps}{2} \bar r,$$ since the $u_k$ converge uniformly to $u^*$ on $B_{1/2}$ we obtain that for all $k$ large
$$\|u_k- (\alpha x_n^+ - \beta x_n^- )\|_{L^\infty(B_{\bar r})} \leq \eps \bar r,$$ a contradiction.
\end{proof}

In view of Lemma \ref{normalize}, after rescaling our main Theorem \ref{flatmain1} follows from the following main Theorem \ref{main_new}.

\begin{thm}\label{main_new} Let $u$ be a solution to \eqref{fb} in $B_1$ with $Lip(u) \leq L$ and $\|f\|_{L^\infty} \leq L$. There exists a universal
constant $\bar \ep>0$ such that, if \be\label{initialass}\|u - U_{\beta}\|_{L^{\infty}(B_{1})} \leq \bar \eps\quad \text{for some $0 \leq \beta \leq L,$}\ee and
\begin{equation*} \{x_n \leq - \bar \ep\} \subset B_1 \cap \{u^+(x)=0\} \subset \{x_n \leq \bar \ep \},\end{equation*} and $$\|f\|_{L^\infty(B_1)} \leq \bar \ep,$$
then $F(u)$ is $C^{1,\gamma}$ in $B_{1/2}$.

\end{thm}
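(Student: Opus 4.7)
My proof plan follows the De Silva improvement-of-flatness strategy adapted to two-phase problems. The key step is an improvement-of-flatness lemma: there exist universal constants $\rho\in(0,1)$ and $\bar\eps>0$ such that if $u$ solves \eqref{fb} in $B_1$ with $\|u-U_\beta\|_{L^\infty(B_1)}\leq\eps$, $\|f\|_{L^\infty(B_1)}\leq\eps^2$, $\eps\leq\bar\eps$, and the trapping condition holds with $\delta=\eps$, then there exist a unit vector $\nu$ with $|\nu-e_n|\leq C\eps$ and $\beta'\geq 0$ with $|\beta'-\beta|\leq C\eps$ such that
\[
\|u(x)-U_{\beta'}(x\cdot\nu)\|_{L^\infty(B_\rho)}\leq \tfrac{\eps\rho}{2}.
\]
Once this is available, rescaling $u_\rho(x)=u(\rho x)/\rho$ preserves the solution class (with forcing $\rho f(\rho\cdot)$, still universally small) and produces a sequence of rescaled solutions each satisfying the hypothesis with the improved $\eps/2$. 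Iterating yields a geometrically decreasing sequence of two-plane approximations at every free boundary point in $B_{1/2}$; the corresponding normals and $\beta$-parameters converge at a geometric rate, uniformly in the base point, giving $C^{1,\gamma}$ regularity of $F(u)$ in $B_{1/2}$.

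To prove improvement of flatness I would argue by contradiction: assume sequences $\eps_k\to 0$, $\beta_k\to\beta^*\in[0,L]$ and solutions $u_k$ violating the conclusion. Consider the normalized differences
\[
\tilde u_k(x)=\frac{u_k(x)-U_{\beta_k}(x_n)}{\eps_k}.
\]
The geometric Harnack inequality (Section \ref{Harnacksec}) is exactly what is needed to trap the graphs of $\tilde u_k^\pm$ in horizontal strips of shrinking height, which translates into uniform Hölder estimates on $\tilde u_k^\pm$ restricted to their respective phases. Passing to a subsequence, $\tilde u_k\to\tilde u$ locally uniformly on $B_{1/2}\cap\{x_n\neq 0\}$. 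Using Lemma \ref{compact_delta} and standard viscosity arguments, $\tilde u$ solves the linearized problem: $\Delta\tilde u=0$ in $B_{1/2}^\pm$ with a transmission condition at $\{x_n=0\}$ of the form $\alpha^*\partial_n\tilde u^+-\beta^*\partial_n\tilde u^-=0$ when $\beta^*>0$ (where $\alpha^*=\sqrt{1+(\beta^*)^2}$). Interior $C^{1,\gamma}$ (in fact smoothness) of the linearized problem gives an expansion
\[
|\tilde u(x)-\tilde u(0)-\xi'\cdot x'-(a^+x_n^+-a^-x_n^-)|\leq C|x|^2,\qquad \alpha^* a^+=\beta^* a^-,
\]
which pulls back, for $k$ large, to a strictly better two-plane approximation of $u_k$ in $B_\rho$, contradicting the standing assumption.

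The principal obstacle is the degenerate regime $\beta^*=0$: then $U_{\beta_k}\to x_n^+$, the negative phase of $u_k$ may be too small to be detected at scale $\eps_k$, and one cannot trap $\tilde u_k$ from below on the negative side. As the authors emphasize in the discussion preceding Section \ref{section2}, the right fix is to prove a separate, one-phase version of Harnack and of improvement of flatness working exclusively with $u_k^+$; the limiting problem becomes the one-phase Neumann-type problem $\Delta\tilde u=0$ in $\{x_n>0\}$ with $\partial_n\tilde u^+=0$ on $\{x_n=0\}$, whose interior regularity again supplies the improved plane. To assemble the two cases, one fixes a universal threshold $\beta_0$: if $\beta\geq\beta_0$, apply the non-degenerate argument; if $\beta<\beta_0$, apply the degenerate one-phase argument. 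The forcing $f$ enters only through an error of order $\|f\|_\infty$ in the viscosity comparison step, which the scaling $\|f\|_\infty\leq\eps^2$ (or $\bar\eps$ after rescaling, since $f\rho\to 0$ under the iteration) renders harmless. Once improvement of flatness is in place in both regimes, the iteration and the deduction of $C^{1,\gamma}$ regularity from geometric decay of the approximating two-plane solutions are routine.
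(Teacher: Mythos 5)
Your overall architecture---Harnack to compactness, linearize, improve flatness, iterate---is the right one and matches the paper. But the way you propose to \emph{assemble} the degenerate and non-degenerate cases contains a genuine gap, and it is precisely the most delicate point of this two-phase argument.

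You suggest fixing a universal threshold $\beta_0$ once and for all and dispatching each solution to the non-degenerate argument if $\beta\geq\beta_0$ and to the one-phase (degenerate) argument if $\beta<\beta_0$. This does not work, because in the degenerate regime the improvement-of-flatness lemma controls only $u^+$ and proceeds under the \emph{dynamic} side condition $\|u_k^-\|_{L^\infty}\leq\eps_k^2$. This is not a property you can guarantee in advance from the initial $\beta$: as you rescale, the negative phase shrinks linearly ($u_k^-(x)=\rho_k^{-1}u^-(\rho_k x)$) while $\eps_k$ shrinks geometrically, so there may be a first step $\bar k$ at which $\|u_{\bar k}^-\|_\infty\geq\eps_{\bar k}^2$ and the degenerate iteration stalls. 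At that moment you are not yet close to a \emph{true} two-plane solution $U_{\beta'}$ with $\beta'$ bounded below, so you cannot simply switch to the non-degenerate lemma either. The paper handles the transition with Lemma~\ref{finalcase}: it proves that at the first bad step $\|u_{\bar k}^-\|_{L^\infty(B_2)}\leq\bar C\eps_{\bar k}^2$, and after further rescaling $v(x)=\eps_{\bar k}^{-1/2}u_{\bar k}(\eps_{\bar k}^{1/2}x)$ is trapped between $U_{\beta'}(x_n\pm C'\eps_{\bar k}^{1/2})$ with $\beta'\sim\eps_{\bar k}^2$; only then can the non-degenerate iteration take over. This intermediate lemma---which rests on a boundary Harnack / Hopf expansion of $u^-$ near the hyperplane---is not a routine consequence of the two improvement-of-flatness lemmas and cannot be skipped.

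A secondary but related issue is the scaling of $f$. You posit $\|f\|_\infty\leq\eps^2$ throughout, but the non-degenerate Harnack/improvement lemmas require $\|f\|_\infty\leq\eps^2\beta$ (the factor $\beta$ is what makes the barrier $v_t$ a genuine subsolution when perturbing $U_\beta$), while the degenerate ones require $\|f\|_\infty\leq\eps^4$. These are incompatible with each other for fixed $\eps$ and small $\beta$, and it is exactly after the rescaling of Lemma~\ref{finalcase} that the inequality $\|g\|_\infty\leq\eta^2\beta'$ becomes verifiable (since $\beta'\sim\eps_{\bar k}^2$ and $g$ gains a factor $\eps_{\bar k}^{1/2}$). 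Without tracking these different scales your iteration would not close. In short: identifying the degenerate regime and sketching the one-phase Neumann limit is correct, but the crux of the proof---the dynamic handover from degenerate to non-degenerate, encoded in Lemma~\ref{finalcase}---is missing from your plan.
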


Finally, we also need the following elementary lemma which holds for any arbitrary continuous function $u$.

\begin{lem}\label{elementary} Let $u$ be a continuous function. If for $\eta>0$ small, $$\|u - U_{\beta}\|_{L^{\infty}(B_{2})} \leq \eta, \quad \text{$0 \leq \beta \leq L,$}$$ and
\begin{equation*} \{x_n \leq - \eta\} \subset B_2 \cap \{u^+(x)=0\} \subset \{x_n \leq \eta \},\end{equation*} then \begin{itemize}\item If $\beta \geq \eta^{1/3}$, then $$U_\beta(x_n - \eta^{1/3}) \leq u(x) \leq U_\beta(x_n + \eta^{1/3}),\quad \text{in $B_1$}$$ \\
\item If $\beta < \eta^{1/3},$ then $$U_0(x_n - \eta^{1/3}) \leq u^+(x) \leq U_0(x_n + \eta^{1/3}),\quad \text{in $B_1$}.$$
\end{itemize}\end{lem}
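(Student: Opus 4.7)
The statement is purely algebraic and geometric: no PDE input is used, only the explicit structure of $U_\beta(t) = \alpha t^+ - \beta t^-$ with $\alpha = \sqrt{1+\beta^2}$. The property I want to exploit is that $U_\beta$ is monotone nondecreasing with slope $\alpha$ on $\{t>0\}$ and slope $\beta$ on $\{t<0\}$. My plan is to split into the two cases and verify the claimed sandwich by direct comparison, using the flatness inclusion only to rule out points where $u^+>0$ while $x_n$ is very negative.

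In Case 1, $\beta \geq \eta^{1/3}$, both slopes of $U_\beta$ are at least $\beta \geq \eta^{1/3}$. A short case check (for $s$, $s-\eta^{1/3}$, $s+\eta^{1/3}$ lying in the same half-line or straddling $0$, using $\alpha \geq \beta$) yields the uniform estimate
$$U_\beta(s+\eta^{1/3}) - U_\beta(s) \geq \beta\,\eta^{1/3} \geq \eta^{2/3}, \qquad U_\beta(s) - U_\beta(s-\eta^{1/3}) \geq \eta^{2/3},$$
for every $s\in\mathbb{R}$. Since $\eta^{2/3} \geq \eta$ for $\eta\leq 1$, combining with the hypothesis $|u-U_\beta|\leq \eta$ on $B_2$ gives $U_\beta(x_n-\eta^{1/3}) \leq U_\beta(x_n)-\eta \leq u(x) \leq U_\beta(x_n)+\eta \leq U_\beta(x_n+\eta^{1/3})$ for $x\in B_1$, as required.

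In Case 2, $\beta < \eta^{1/3}$, so $\alpha \leq 1 + \tfrac12 \beta^2 \leq 1 + \tfrac12 \eta^{2/3}$; the function $U_\beta$ is quantitatively close to $U_0(t)=t^+$ on the positive side, and the small slope on the negative side forces $u^-$ to be small. I argue the two inequalities for $u^+$ separately. For the upper bound $u^+ \leq U_0(x_n+\eta^{1/3})$, I may assume $u^+(x)>0$ and split by the sign of $x_n$: if $x_n\geq 0$, the hypothesis gives $u(x) \leq \alpha x_n + \eta \leq x_n + \tfrac12\eta^{2/3} + \eta \leq x_n + \eta^{1/3}$ for $\eta$ small; if $x_n<0$, the inclusion $\{u^+=0\}\supset\{x_n\leq -\eta\}$ forces $x_n>-\eta$, so $u(x)\leq \beta x_n + \eta \leq \eta \leq x_n + \eta^{1/3}$. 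For the lower bound, only $x_n>\eta^{1/3}$ is nontrivial, and there $u(x) \geq \alpha x_n - \eta \geq x_n - \eta > 0$, so $u^+(x)=u(x)\geq x_n - \eta^{1/3}$.

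The whole argument is bookkeeping; the only point that needs attention is making sure the chain of inequalities among $\eta$, $\eta^{1/3}$, $\eta^{2/3}$ all point the right way, which is automatic once $\eta$ is smaller than a purely numerical constant, as the hypothesis ``$\eta>0$ small'' allows. No compactness, PDE, or free-boundary-specific machinery is invoked beyond the elementary flatness inclusion.
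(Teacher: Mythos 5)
Your proof is correct, and it proceeds by the only natural route: exploit the explicit piecewise-linear structure of $U_\beta$, splitting a uniform lower bound on its increment over an interval of length $\eta^{1/3}$ according to whether $\beta$ is above or below $\eta^{1/3}$, and use the flatness inclusion just once, in the degenerate case, to rule out $u^+>0$ at points with $x_n \le -\eta$. The paper itself leaves Lemma \ref{elementary} unproved (it is stated as an elementary fact with no argument given), so there is no authorial proof to compare against; the decomposition you use is exactly the one the statement's two bullets are designed to invite. Two minor points worth stating explicitly when you write it up: in the first case the flatness inclusion plays no role at all (the $L^\infty$ bound suffices, since $U_\beta(s+\eta^{1/3})-U_\beta(s)\ge\beta\,\eta^{1/3}\ge\eta^{2/3}\ge\eta$ for every $s$), and in the second case the chain $x_n>-\eta$, $u(x)\le\eta$, $\eta\le x_n+\eta^{1/3}$ requires $2\eta\le\eta^{1/3}$, which you should record as the quantitative smallness condition on $\eta$ alongside $\tfrac12\eta^{2/3}+\eta\le\eta^{1/3}$ from the positive-side estimate.
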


\section{The linearized problem}\label{section3}

This section is devoted to the study of the linearized problem associated with  our free boundary problem \eqref{fb}, that is
 the following boundary
value problem, ($\tilde \alpha \neq 0$)
\begin{equation}\label{Neumann_p}
  \begin{cases}
    \Delta \tilde u=0 & \text{in $B_\rho \cap \{x_n \neq 0\}$}, \\
\ \\
\tilde \alpha^2  (\tilde u_n)^+ - \tilde \beta^2 (\tilde u_n)^-=0 & \text{on $B_\rho \cap \{x_n =0\}$}.
  \end{cases}\end{equation}
Here $(\tilde u_n)^{+}$ (resp. $(\tilde u_n)^-$) denotes the derivative in the $e_n$ direction of $\tilde u$ restricted to $\{x_n >0\}$ (resp. $\{x_n <0\}$). 

We remark that Theorem \ref{main_new} will follow, see Section \ref{section6}, via a compactness argument  from the regularity
properties of viscosity solutions to \eqref{Neumann_p}.

\begin{defn}A continuous function $u$ is a viscosity solution to \eqref{Neumann_p} if

(i)   $\Delta \tilde u=0  \quad \text{in $B_\rho \cap \{x_n \neq 0\}$},$ in the viscosity sense;

\

(ii) Let $\phi$ be a function of the form
$$\phi(x) = A+ px_n^+- qx_n^-+B Q(x-y)$$ with $$Q(x) = \frac 1 2 [(n-1)x_n^2 - |x'|^2], \quad y=(y',0), \quad A \in \R, B>0$$ and $$\tilde \alpha^2 p- \tilde \beta^2 q>0.$$ Then $\phi$ cannot touch $u$ strictly by below at a point $x_0= (x_0', 0) \in B_{\rho}.$

Analogously, if  $$\tilde \alpha^2 p- \tilde \beta^2 q<0$$ then $\phi$ cannot touch $u$ strictly by above at $x_0$.
\end{defn}

We wish to prove the following regularity result for viscosity solutions to the linearized problem.

\begin{thm}\label{linearreg}Let $\tilde u$ be a viscosity solution to \eqref{Neumann_p} in $B_{1/2}$ such that $\|\tilde u\|_\infty \leq 1$. There exists a universal constant $\bar C$ such that
\be\label{lr}|\tilde u(x) - \tilde u(0) -(\nabla_{x'}\tilde u(0)\cdot x' + \tilde px_n^+ - \tilde qx_n^-)| \leq \bar C r^2, \quad \text{in $B_r$}\ee for all $r \leq 1/4$ and with $\tilde \alpha^2 \tilde p -\tilde \beta^2 \tilde q=0.$
\end{thm}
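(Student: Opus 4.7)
The plan is to reduce the problem to a standard pointwise interior regularity estimate for a single harmonic function, by reflecting $\tilde u$ across the interface $\{x_n=0\}$ in a way dictated by the transmission condition. Specifically, I would set
\[
w(x', x_n) := \begin{cases} \tilde u(x', x_n), & x_n \geq 0, \\ A\, \tilde u(x', -x_n) + B\, \tilde u(x', x_n), & x_n < 0, \end{cases}
\]
with $A := (\tilde\alpha^2 - \tilde\beta^2)/(\tilde\alpha^2 + \tilde\beta^2)$ and $B := 2\tilde\beta^2/(\tilde\alpha^2+\tilde\beta^2) = 1 - A$. Continuity of $w$ across $\{x_n=0\}$ is equivalent to $A+B=1$, and a direct classical computation shows that matching $\partial_{x_n} w$ from both sides of the interface is equivalent, modulo the transmission condition $\tilde\alpha^2 \tilde u_n^+ = \tilde\beta^2 \tilde u_n^-$, precisely to this choice of $A$.

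I would next argue that $w$ is harmonic on the whole ball $B_{1/2}$. Since $\tilde u$ is classically harmonic in the two open half-balls (being a bounded harmonic function with continuous trace on $\{x_n=0\}$), $w$ is classically harmonic in each open half-ball as well, being a linear combination of $\tilde u(x',x_n)$ and $\tilde u(x',-x_n)$. The content lies at the interface, where I would verify $w$ is a viscosity solution of $\Delta w = 0$. If $\phi$ touches $w$ strictly from above at $x_0 \in \{x_n=0\}$, I would build a wedge-type test function $\psi(x) = A_0 + p x_n^+ - q x_n^- + B_0 Q(x-y)$ for $\tilde u$, with $(p,q)$ extracted from the one-sided linearisations of $\phi$ on either side of the interface so that $\tilde\alpha^2 p - \tilde\beta^2 q > 0$, and $B_0$ chosen to absorb the tangential Hessian of $\phi$. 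The viscosity definition of $\tilde u$ then forbids such a $\psi$ from touching $\tilde u$ strictly from below at $x_0$, which, upon unwinding the comparison, forces $\Delta \phi(x_0) \leq 0$. The touching-from-below case is symmetric.

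Once $w$ is harmonic on $B_{1/2}$, standard pointwise interior $C^{1,1}$ estimates give $|D^2 w(0)| \leq \bar C \|w\|_{L^\infty(B_{1/2})} \leq \bar C$ (using $\|\tilde u\|_\infty \leq 1$), so Taylor expansion yields $|w(x) - w(0) - \nabla w(0)\cdot x| \leq \bar C r^2$ on $B_r$, $r\leq 1/4$. For $x_n\geq 0$ this is exactly the claimed estimate with $\tilde p := \partial_{x_n} w(0)$. For $x_n<0$ I would invert the reflection identity, writing $\tilde u(x) = (w(x) - A\,\tilde u(x',-x_n))/B$, and substitute the just-obtained expansion for $w$ together with the expansion of $\tilde u(x',-x_n)$ from the upper half-ball. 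After simplification the error remains $O(r^2)$ and the coefficient of $x_n$ on the negative side is some $\tilde q$ satisfying $\tilde\alpha^2\tilde p - \tilde\beta^2\tilde q = 0$ by the very definition of $A$ and $B$.

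The delicate step is the viscosity identification at the interface in the second paragraph: without a priori classical one-sided derivatives of $\tilde u$ on $\{x_n=0\}$, one must read $(p,q)$ off $\phi$ carefully, confirm the sign of $\tilde\alpha^2 p - \tilde\beta^2 q$, and arrange for the touching of $\psi$ against $\tilde u$ to be strict (by a standard small perturbation of the $Q$-term and a vertical translation) so that the admissible viscosity test-function class applies. Should this route become awkward, a safe fallback is first to bootstrap one-sided $C^{1,\mu}$ regularity of $\tilde u$ up to $\{x_n=0\}$ via a Hopf-type argument using the admissible wedge test functions, and then perform the reflection matching pointwise in the classical category.
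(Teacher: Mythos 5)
Your reflection identity is correct at the classical level. With
\[
w(x',x_n)=\tilde u(x',x_n)\ \text{for } x_n\ge 0,\qquad w(x',x_n)=A\,\tilde u(x',-x_n)+B\,\tilde u(x',x_n)\ \text{for }x_n<0,
\]
$A=(\tilde\alpha^2-\tilde\beta^2)/(\tilde\alpha^2+\tilde\beta^2)$, $B=1-A$, the two one--sided normal derivatives of $w$ on $\{x_n=0\}$ do match precisely under the transmission condition, and if $\tilde u$ had classical one--sided $C^1$ traces the conclusion would follow quickly. The gap is in the viscosity identification at the interface, and it is not a small one. First, the orientation of your reflection is wrong for the comparison to close: inverting your formula gives $\tilde u(x)=\bigl(w(x)-A\,w(x',-x_n)\bigr)/B$ for $x_n<0$, and with $A>0$ (as here, since $\tilde\alpha^2=1+\tilde\beta^2>\tilde\beta^2$) a bound $w\le\phi$ (touching from above) yields $-A\,w(x',-x_n)\ge -A\,\phi(x',-x_n)$, so it does {\it not} propagate to an upper barrier for $\tilde u$ on $\{x_n<0\}$; the same sign obstruction appears for touching from below. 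One would have to reflect from the negative side instead, so that the corresponding coefficient is negative. Second, and more fundamentally, even with the right orientation the transformed barrier $\Phi$ obtained by replacing $w$ with $\phi$ in the inverse formula has one--sided slopes $p$, $q$ at the interface satisfying $\tilde\alpha^2 p-\tilde\beta^2 q=0$ {\it exactly}, not $>0$. The admissible class of test functions in the viscosity definition requires a strict inequality, and there is no room under $\Phi$ for a wedge with $\tilde\alpha^2 p-\tilde\beta^2 q>0$ (raising $p$ or lowering $q$ pushes the wedge above $\Phi$ at first order). This is exactly why the paper's Theorem~\ref{34} first adds the kink $\varepsilon|x_n|+\varepsilon x_n^2-\varepsilon$ to the classical solution before sliding: the kink creates a genuine $O(\varepsilon)$ angular gap at the interface which can absorb half of the kink in the wedge test function while keeping $\tilde\alpha^2 p-\tilde\beta^2 q>0$. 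Your sketch omits this ingredient, and without it the attempted construction of $\psi$ fails.

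Your proposed fallback, to ``bootstrap one--sided $C^{1,\mu}$ regularity of $\tilde u$ up to $\{x_n=0\}$ via a Hopf--type argument'' and then reflect classically, is essentially asking for the conclusion: the boundary regularity is the hard part, and the admissible wedge test functions do not by themselves furnish a Hopf lemma at the interface. The paper avoids the circularity by a different decomposition: Theorem~\ref{33} constructs an explicit classical solution $\tilde v$ to the transmission problem (via the odd harmonic part, then a Neumann problem on one side, then a matching harmonic extension on the other), which has the $C^2$--up--to--the--interface expansion \eqref{lr_again} by construction. Theorem~\ref{34} then compares the kinked translates $\tilde v_{t,\varepsilon}$ against $\tilde u$ and uses \eqref{lr_again} to build the admissible wedge at the touching point, with the kink providing the slack needed for $\tilde\alpha^2 p-\tilde\beta^2 q>0$. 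If you want to pursue the reflection route, the realistic version is to reflect from the negative side, add the $\varepsilon$-kink to your reflected $w$, and carry out a sliding argument against $\tilde u$; but by the time you have done so you will have reproduced the paper's proof with more bookkeeping, and you will also need a separate treatment of the degenerate case $\tilde\beta=0$ where your $B$ vanishes.
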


In order to prove the above result, first we show in the following Theorem \ref{33}, that problem \eqref{Neumann_p} admits a classical solution.  

\begin{thm}\label{33} Let $h$ be a continuous function on $\p B_1$. There exists a (unique) classical solution $\tilde v$ to \eqref{Neumann_p} with $\tilde v = h$ on $\p B_1$, that is $\tilde v \in C^{\infty}(B_1 \cap \{x_n \geq 0\}) \cap C^{\infty}(B_1 \cap \{x_n \leq 0\}).$ In particular,
there exists a universal constant $\tilde C$ such that
\be\label{lr_again}|\tilde v(x) - \tilde v(\bar x) -(\nabla_{x'}\tilde v(\bar x)\cdot (x'- \bar x') + \tilde p(\bar x)x_n^+ - \tilde q(\bar x)x_n^-)| \leq \tilde C\|\tilde v\|_{L^\infty} r^2 , \quad \text{in $B_r(\bar x)$}\ee for all $r \leq 1/4, \bar x=(\bar x',0) \in B_{1/2}$ and with $\tilde \alpha^2 \tilde p(\bar x) -\tilde \beta^2 \tilde q(\bar x)=0.$
\end{thm}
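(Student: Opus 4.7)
The strategy is to reduce the transmission problem \eqref{Neumann_p} to the standard Laplace equation on $B_1$ via a reflection trick, from which both smoothness up to the interface and the estimate \eqref{lr_again} follow as interior harmonic estimates in disguise. For existence of $\tilde v$, the plan is to use the variational formulation: minimize
$$J(u) = \frac{1}{2}\int_{B_1 \cap \{x_n>0\}} \tilde\alpha^2\,|\nabla u|^2\, dx + \frac{1}{2}\int_{B_1 \cap \{x_n<0\}} \tilde\beta^2\,|\nabla u|^2\, dx$$
over $\{u \in H^1(B_1) : u|_{\partial B_1} = h\}$, falling back on Perron's method if $h$ is merely continuous (sub/supersolutions of the form $A + px_n^+ - qx_n^-$ with $\tilde\alpha^2 p$ strictly bigger or smaller than $\tilde\beta^2 q$ provide the required barriers in the sense of the paper's viscosity definition). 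The Euler--Lagrange equations give $\Delta\tilde v = 0$ in $B_1 \cap \{x_n \neq 0\}$ together with the natural transmission condition $\tilde\alpha^2 \tilde v_n^+ = \tilde\beta^2 \tilde v_n^-$ across $\{x_n = 0\}$, in the trace sense; uniqueness is immediate from the coercivity of $J$. A standard tangential difference-quotient argument applied to the weak formulation (valid since the interface is translation-invariant in $x_1, \ldots, x_{n-1}$) then bootstraps $\tilde v$ into $H^k(B_{3/4} \cap \{x_n > 0\}) \cap H^k(B_{3/4} \cap \{x_n < 0\})$ for every $k$, providing the one-sided regularity needed for the next step.

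Set $A = (\tilde\alpha^2 - \tilde\beta^2)/(\tilde\alpha^2 + \tilde\beta^2)$ and $B = 2\tilde\beta^2/(\tilde\alpha^2 + \tilde\beta^2)$, so that $A + B = 1$ and $B\tilde\alpha^2/\tilde\beta^2 - A = 1$, and define
$$W(x) := \begin{cases} \tilde v(x', x_n), & x_n \geq 0, \\ A\,\tilde v(x', -x_n) + B\,\tilde v(x', x_n), & x_n < 0. \end{cases}$$
Because $A+B=1$, $W$ is continuous across $\{x_n=0\}$. On each open half-ball $W$ is harmonic: the reflected term $\tilde v(x', -x_n)$ is harmonic on $\{x_n<0\}$ since $\tilde v$ is harmonic on $\{x_n>0\}$ and harmonicity is preserved under the normal reflection $x_n \mapsto -x_n$. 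A direct computation using the transmission condition $\tilde v_n^- = (\tilde\alpha^2/\tilde\beta^2)\tilde v_n^+$ together with the algebraic identity $B\tilde\alpha^2/\tilde\beta^2 - A = 1$ then gives $W_{x_n}(x',0^-) = \tilde v_n^+ = W_{x_n}(x',0^+)$, so $W \in C^1(B_1)$. Testing against $\phi \in C_c^\infty(B_1)$ and integrating by parts on each half, the interfacial boundary terms cancel thanks to this $C^1$ matching and the harmonicity on each side, yielding $\Delta W = 0$ weakly in $B_1$; Weyl's lemma upgrades this to classical harmonicity.

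Since $\|W\|_{L^\infty(B_1)} \leq C\|\tilde v\|_{L^\infty(B_1)}$ by construction, standard interior harmonic estimates yield $\|D^k W\|_{L^\infty(B_{3/4})} \leq C_k\|\tilde v\|_{L^\infty(B_1)}$ for every $k$. Recovering $\tilde v = W$ on $\{x_n\geq 0\}$ and $\tilde v(x', x_n) = B^{-1}\bigl(W(x', x_n) - A\,W(x', -x_n)\bigr)$ on $\{x_n\leq 0\}$ (using $\tilde v(x',-x_n) = W(x',-x_n)$ when $-x_n>0$) shows $\tilde v$ is smooth up to the interface from each side with bounds of the same order. Second-order Taylor expansion of each smooth piece about $\bar x = (\bar x', 0) \in B_{1/2}$ then yields \eqref{lr_again}: the value and the tangential gradient of $\tilde v$ are continuous across $\{x_n=0\}$, while the normal-derivative jump is exactly captured by $\tilde p(\bar x)x_n^+ - \tilde q(\bar x)x_n^-$ with $\tilde p(\bar x) = \tilde v_{x_n}^+(\bar x)$ and $\tilde q(\bar x) = \tilde v_{x_n}^-(\bar x)$, and the identity $\tilde\alpha^2\tilde p(\bar x) - \tilde\beta^2\tilde q(\bar x) = 0$ is simply the transmission condition itself. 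The principal obstacle is a chicken-and-egg issue in the reflection step: defining $W$ and verifying its $C^1$ matching presupposes one-sided normal derivatives of $\tilde v$ on the interface, which is precisely why the tangential difference-quotient bootstrap in the first paragraph is a necessary prerequisite rather than a cosmetic refinement.
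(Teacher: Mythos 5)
Your proof is correct and, at its core, is the same reflection idea as the paper's, but organized in the opposite order. The paper builds the classical solution explicitly: it first solves a Dirichlet problem on the upper half-ball for $w$ (the odd part of $h$, vanishing on $\{x_n=0\}$, so smooth up to the interface by odd reflection), then a mixed Dirichlet--Neumann problem for $\tilde v_2$ on the lower half-ball with smooth Neumann data $\tilde q\, w_n$, and finally sets $\tilde v_1 = w + \tilde v_2(x',-x_n)$; the transmission condition falls out algebraically from the choice $\tilde q = \tilde\alpha^2/(\tilde\alpha^2+\tilde\beta^2)$, and smoothness of $\tilde v$ up to $\{x_n=0\}$ is built in from the start, so the Taylor estimate \eqref{lr_again} is immediate with $\tilde p(\bar x)=\tilde p\,w_n(\bar x)$, $\tilde q(\bar x)=\tilde q\,w_n(\bar x)$. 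You instead obtain existence abstractly (variational/Perron), bootstrap one-sided $H^k$ regularity by tangential difference quotients, and only then apply the reflection to reduce to a single harmonic function $W$. Your algebra checks out: with $A=(\tilde\alpha^2-\tilde\beta^2)/(\tilde\alpha^2+\tilde\beta^2)$, $B=2\tilde\beta^2/(\tilde\alpha^2+\tilde\beta^2)$ one indeed has $A+B=1$ and $B\tilde\alpha^2/\tilde\beta^2 - A=1$, and the computation $W_{x_n}(x',0^-)=-A\tilde v_n^+ + B\tilde v_n^- = (B\tilde\alpha^2/\tilde\beta^2 - A)\tilde v_n^+=\tilde v_n^+$ shows the $C^1$ matching, so $W$ is genuinely harmonic and the $L^\infty$ Schauder bounds transfer back to $\tilde v$. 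Two places are thin: (i) the continuous-data issue (no $H^1$ extension is guaranteed) is waved toward Perron's method without specifying barriers that actually force attainment of the boundary value at each point of $\partial B_1$ (the affine barriers you describe live on the interface, not on the sphere), whereas the paper's construction handles continuous data automatically through standard Dirichlet/Neumann solvers; and (ii) the chicken-and-egg issue you flag is real and is precisely what the paper's constructive route avoids, since there one never needs a priori regularity -- the solution is assembled from pieces that are smooth by inspection. Resolving (i) by smooth approximation of $h$ and interior compactness, and carrying out the difference-quotient bootstrap, brings your argument to a complete proof; it is a legitimately different -- more abstract but also more robust -- route to the same conclusion.
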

\begin{proof}
Let $w$ be the harmonic function in $B_1 \cap \{x_n >0\}$ such that $$w = 0 \quad \text{on $B_1 \cap \{x_n=0\}$}, \quad w(x) = h(x', x_n) - h(x', -x_n) \quad \text{on $\p B_1 \cap \{x_n >0\}$.}$$ Then $w \in C^\infty(B_1\cap \{x_n \geq 0\}).$ Call $$\phi(x') = w_n(x', 0), \quad (x',0) \in B_1.$$ Let
$$\tilde v_1(x) = w(x)+ \tilde v_2(x', -x_n) \quad \text{in $\bar B_1 \cap \{x_n\geq 0\}$}$$
where $ \tilde v_2$ is the solution to the problem $$\begin{cases}
\Delta \tilde v_2 = 0 \quad \text{in $B_1 \cap \{x_n<0\}$}\\
\tilde v_2 = h \quad \text{on $\p B_1 \cap \{x_n <0\}$}\\
(\tilde v_2)_n = \tilde q \phi \quad \text{on $B_1 \cap  \{x_n=0\}$}\end{cases}$$ with $$ \tilde q = \frac{\tilde \alpha^2}{\tilde \beta^2+\tilde \alpha^2}.$$ Then it is easily verified that the function
$$\tilde v = \begin{cases} \tilde v_1  \quad \text{in $\bar B_1 \cap \{x_n \geq 0\}$}\\ \tilde v_2  \quad \text{in $\bar B_1 \cap \{x_n \leq 0\}$}\end{cases}$$
is the unique classical solution to our problem and hence it satisfies the estimate \eqref{lr_again} with $$\tilde q(\bar x) = \tilde q \phi(\bar x), \quad \tilde p(\bar x) = \tilde p \phi(\bar x), $$ and $$\tilde p = \frac{\tilde \beta^2}{\tilde \beta^2+\tilde \alpha^2}.$$
\end{proof}
Finally, to obtain our regularity result we only need to show the following fact.

\begin{thm}\label{34} Let $\tilde u$ be a viscosity solution to \eqref{Neumann_p} in $B_{1}$ such that $\|\tilde u\|_\infty \leq 1$ and let $\tilde v$ be the classical solution to \eqref{Neumann_p} in $B_{1/2}$ with boundary data $\tilde u$. Then $\tilde u = \tilde v.$
\end{thm}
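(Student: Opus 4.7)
The plan is to prove $\tilde u \equiv \tilde v$ in $B_{1/2}$ by a sliding comparison argument that combines the viscosity definition with Hopf's lemma. Suppose for contradiction that $M := \max_{\overline B_{1/2}}(\tilde u - \tilde v) > 0$ (the opposite sign is symmetric and handled by the same argument applied to $\tilde v - \tilde u$, using the complementary branch of the viscosity condition). Since $\tilde u$ is a viscosity solution of $\Delta \tilde u = 0$ in $\{x_n \neq 0\}$, it is a classical harmonic function in each open half-ball $B_{1/2}^\pm := B_{1/2}\cap\{\pm x_n > 0\}$; thus $\tilde u - \tilde v$ is harmonic in each $B_{1/2}^\pm$ and vanishes on $\partial B_{1/2}$, and the strong maximum principle in each half-ball forces the maximum point $x_0$ to lie on the flat interface $B_{1/2}\cap\{x_n=0\}$. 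Setting $w := \tilde v + M - \tilde u \geq 0$ (nonnegative, harmonic in each $B_{1/2}^\pm$, and not identically zero since $w \equiv M$ on $\partial B_{1/2}$), Hopf's lemma applied in each half-ball yields constants $c, c', r_0 > 0$ with $w \geq c\,x_n$ in $B_{r_0}^+(x_0)$ and $w \geq -c'\,x_n$ in $B_{r_0}^-(x_0)$. Combined with the $C^2$ expansion of $\tilde v$ from Theorem~\ref{33}, this gives the upper bound
\[
\tilde u(x) \leq \tilde u(x_0) + \nabla_{x'}\tilde v(x_0)\cdot(x'-x_0') + (\tilde p(x_0)-c)\,x_n^+ - (\tilde q(x_0)+c')\,x_n^- + \bar C r^2
\]
in $B_{r_0}(x_0)$, with $r = |x-x_0|$ and $\tilde\alpha^2\tilde p(x_0) = \tilde\beta^2\tilde q(x_0)$.

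Set $\sigma := \tfrac12\min(c,c')$ and, for $B > 0$ to be fixed and a sliding parameter $K \geq 0$, consider
\[
\phi_K(x) := \tilde u(x_0) + \nabla_{x'}\tilde v(x_0)\cdot(x'-x_0') + (\tilde p(x_0)-\sigma)\,x_n^+ - (\tilde q(x_0)+\sigma)\,x_n^- + B\,Q(x-x_0) + K.
\]
Placing the center of $Q$ at $y := x_0 + (\nabla_{x'}\tilde v(x_0)/B,\,0)$ absorbs the linear $x'$ term into the additive constant (using $B\,Q(x-y) = B\,Q(x-x_0) + \nabla_{x'}\tilde v(x_0)\cdot(x'-x_0') - |\nabla_{x'}\tilde v(x_0)|^2/(2B)$), so $\phi_K$ is an admissible test function of the form $A + p\,x_n^+ - q\,x_n^- + B\,Q(x-y)$ whose free-boundary coefficient equals $\tilde\alpha^2(\tilde p(x_0)-\sigma) - \tilde\beta^2(\tilde q(x_0)+\sigma) = -(\tilde\alpha^2+\tilde\beta^2)\sigma < 0$. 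The displayed upper bound on $\tilde u$ yields
\[
\phi_K(x) - \tilde u(x) \geq K + B\,Q(x-x_0) + \sigma|x_n| - \bar C r^2 \qquad \text{in } B_{r_0}(x_0),
\]
which is positive for $K$ large but fails at $K=0$ on points of $\{x_n=0\}$ with $|x'-x_0'|$ of order $r_0$, where $\sigma|x_n|$ vanishes while $B\,Q = -\tfrac12 B|x'-x_0'|^2$ dominates.

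Hence $K^* := \inf\{K : \phi_K \geq \tilde u \text{ in } B_{r_0}(x_0)\}$ is strictly positive, and $\phi_{K^*}$ touches $\tilde u$ from above at some $x_1 \in \overline{B_{r_0}(x_0)}$. The strong maximum principle, applied to the nonnegative harmonic function $\phi_{K^*} - \tilde u$ in each open half-ball, excludes $x_1 \in B_{r_0}(x_0)\cap\{x_n \neq 0\}$: such a touching would propagate $\phi_{K^*} \equiv \tilde u$ throughout that half-ball and, by continuity up to the interface, force $K^*=0$. Thus $x_1$ lies on the flat interface; a mild (generic) perturbation of $B$ and $y$ makes the touching strict at the single point $x_1$. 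The admissible test function $\phi_{K^*}$ then touches $\tilde u$ strictly from above at an interface point with $\tilde\alpha^2 p - \tilde\beta^2 q < 0$, directly contradicting condition (ii) of the viscosity definition and completing the proof. The main obstacle is the restricted form of admissible test functions: $Q$ is \emph{concave} in the tangential variables $x'$, so a naive use of Taylor's expansion of $\tilde v$ does not produce a valid upper test function; it is precisely the two-sided Hopf boost that supplies the $\sigma|x_n|$ slack needed for the sliding constant $K$ to localize the first touching of $\phi_K$ onto the flat interface, where the viscosity condition is invoked.
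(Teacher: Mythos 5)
The proposal takes a genuinely different route from the paper, but it has a structural gap that the paper's argument is specifically designed to avoid. In the paper one slides the globally defined deformation $\tilde v_{t,\eps}=\tilde v+\eps|x_n|+\eps x_n^2-\eps-t$ of the classical solution $\tilde v$: because of the $-\eps$ penalty, $\tilde v_{\bar t,\eps}<\tilde u$ strictly on $\partial B_{1/2}$, so the first touching point $\bar x$ is interior, and since $\Delta\tilde v_{\bar t,\eps}>0$ in $\{x_n\neq 0\}$, $\bar x$ must lie on the interface. Only then is a local polynomial test $\phi$ of the admissible form produced, and it touches $\tilde u$ strictly through the intermediary smooth function $\tilde v_{\bar t,\eps}$. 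You instead slide the admissible polynomial $\phi_K$ directly in a small ball $B_{r_0}(x_0)$. The problem is that your only control on $\phi_K-\tilde u$ is the one-sided lower bound $\phi_K-\tilde u\geq K+BQ(x-x_0)+\sigma|x_n|-\bar C r^2$, and on $\partial B_{r_0}(x_0)\cap\{x_n=0\}$ this lower bound is $K-(\tfrac B2+\bar C)r_0^2$, which for the critical value $K^*$ is not positive in general (in fact $K^*\leq(\tfrac B2+\bar C)r_0^2$, as follows from the same estimate). So the first touching at level $K^*$ can perfectly well occur at a point of $\partial B_{r_0}(x_0)\cap\{x_n=0\}$, where you have no information about $\phi_{K^*}-\tilde u$ outside the ball, and the viscosity definition cannot be invoked. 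This is precisely the obstruction (the tangential concavity of $Q$) which you identify in your last sentence, but the sliding parameter $K$ does not cure it on the boundary of the small ball; it is cured in the paper because the slid object $\tilde v_{t,\eps}$ has favorable \emph{global} boundary behavior on $\partial B_{1/2}$.

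Three further soft spots compound this. First, the assertion $K^*>0$ is not justified: the displayed inequality is a lower bound for $\phi_K-\tilde u$, so the fact that this lower bound becomes negative at $K=0$ does not show that $\phi_0\geq\tilde u$ actually fails. Second, the exclusion of interior half-ball touching via the strong maximum principle has a loophole: if $K^*=0$ and $\phi_0\equiv\tilde u$ in, say, $B_{r_0}^+(x_0)$, no contradiction is derived; the touching at $x_0$ is then not strict. Third, the strictness of the final touching is left to a ``generic perturbation of $B$ and $y$,'' which alters the test function and may destroy the inequality $\phi_{K^*}\geq\tilde u$; the paper obtains strictness for free by touching the smooth intermediary $\tilde v_{\bar t,\eps}$ strictly and transmitting the strict inequality through $\tilde v_{\bar t,\eps}\leq\tilde u$. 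On the other hand, the Hopf step (i.e.\ $w\geq cx_n^+$ and $w\geq c'x_n^-$ in a small ball around $x_0$) is essentially fine: it is not literally Hopf's lemma at $x_0$, since $w$ need not vanish on the whole interface, but it follows from a barrier comparison with the harmonic function vanishing on $\{x_n=0\}$ in a half-ball (this is the kind of argument the paper also uses, e.g.\ in Lemma \ref{mainvar} for the estimate \eqref{c1}). To repair your proof you would need a mechanism that rules out the boundary touching on $\partial B_{r_0}(x_0)$; the cleanest such mechanism is exactly the paper's: insert the intermediary $\tilde v_{t,\eps}$ and only localize to the polynomial test after the interface touching point has been found.
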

\begin{proof} We prove that $\tilde v \leq \tilde u$ in $B_{1/2}$. The opposite inequality is obtained in a similar way.

Let $\ep >0, t \in \R$ and denote by
$$\tilde v_{t,\ep}(x) = \tilde v + \ep |x_n| +\ep x_n^2 -\ep -t, \quad x \in \bar B_{1/2}.$$ Since $\tilde u$ is bounded, for $t>0$ large enough
\be\label{below}\tilde v_{t, \ep}  \leq \tilde u.\ee
Let $\bar t$ be the smallest $t$ such that \eqref{below} holds and let $\bar x$ be the first touching point. We want to show that $\bar t<0$. Assume $\bar t \geq 0.$
Since
$$\tilde v_{\bar t,\ep}< \tilde u \quad \text{on $\p B_{1/2}$}$$ such touching point must belong to $B_{1/2}.$ However,
$$\Delta \tilde v_{\bar t,\ep}(x) > 0 \quad \text{in $B_{1/2} \cap \{x_n \neq 0\}$}$$
and
$$\Delta \tilde u = 0 \quad \text{in $B_{1/2} \cap \{x_n \neq 0\}$}.$$ Thus $\bar x \in B_{1/2} \cap \{x_n =0\}.$ We claim that there exists a function $\phi$ of the form
$$\phi(x) = A+ px_n^+- qx_n^-+B Q(x-y)$$ with $$Q(x) = \frac 1 2 [(n-1)x_n^2 - |x'|^2], \quad y=(y',0), \quad A \in \R, B>0$$ and $$\tilde \alpha^2 p- \tilde \beta^2 q>0,$$
such that
$\phi$ touches $ \tilde v_{\bar t,\ep}(x)$ strictly by below at $\bar x$. This would contradict the definition of viscosity solutions hence $\bar t <0$. In particular
$$ \tilde v + \ep |x_n| +\ep x_n^2 -\ep  < \tilde u \quad \text{on $B_{1/2}$}$$
and for $\ep$ going to 0 we obtain as desired
$$\tilde v \leq \tilde u \quad \text{on $B_{1/2}$}.$$We are left with the proof of the claim.
Call $$\nu' = \nabla_{x'} \tilde v(\bar x)$$ and set
$$y' = \bar x' + \frac{\nu'}{B}, \quad A = \tilde v(\bar x) - \ep - \bar t - BQ(\bar x - y)$$ with $B>0$ to be chosen later.
Then in view of the estimate \eqref{lr_again}, in order to verify that in a small neighborhood of $\bar x$
$$\phi(x) <  \tilde v_{\bar t,\ep}(x), \quad x \neq \bar x$$ we need to show that we can find $B>0,p,q$ such that  for $|x -\bar x|\neq 0$ small enough ($\tilde C$ universal)
$$\frac{B}{2}(n-1)x_n^2 -\frac{B}{2}|x'-\bar x'|^2 + px_n^+ - qx_n^- < (\tilde p + \ep)x_n^+-(\tilde q-\ep)x_n^- - \tilde C|x-\bar x|^2$$ and $$\tilde \alpha^2 p - \tilde \beta^2 q>0,$$ (for simplicity we dropped the dependence of $\tilde p, \tilde q$ on $\bar x$.)

It is then enough to choose,
$$B= 4 \tilde C, \quad p = \tilde p +\frac{\ep}{2}, \quad q=\tilde q - \frac{\ep}{2}.$$
\end{proof}

\section{Harnack inequality}\label{Harnacksec}

In this section we prove our main tool, that is a Harnack-type inequality for solutions to our free boundary problem. The results contained here will allow us to pass to the limit in the compactness argument of our improvement of flatness lemmas in Section \ref{section5}.

Throughout this section we consider a Lipschitz solution  $u$ to \eqref{fb}  with  $Lip(u) \leq L$.

We need to distinguish two cases, which we call the non-degenerate and the degenerate case.

\subsection{Non-degenerate case}

In this case our solution $u$ is trapped between two translation of a ``true" two-plane solution $U_\beta$ that is with $\beta \neq 0.$

\begin{thm}[Harnack inequality]
\label{HI}
There exists a universal constant $\bar
\ep$,  such that if $u$ satisfies at some point $x_0 \in B_2$

\be\label{osc} U_\beta(x_n+ a_0) \leq u(x) \leq U_\beta(x_n+ b_0) \quad
\text{in $B_r(x_0) \subset B_2,$}\ee
with $$ \|f\|_{L^\infty} \leq \ep^2 \beta, \quad 0 < \beta \leq L, \quad$$ and
$$b_0 - a_0 \leq \ep r, $$ for some $\ep \leq \bar \ep,$ then
$$ U_{\beta}(x_n+ a_1) \leq u(x) \leq U_\beta(x_n+ b_1) \quad \text{in
$B_{r/20}(x_0)$},$$ with
$$a_0 \leq a_1 \leq b_1 \leq b_0, \quad
b_1 -  a_1\leq (1-c)\ep r, $$ and $0<c<1$ universal.
\end{thm}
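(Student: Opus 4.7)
The plan is to carry out the De Silva sliding-barrier strategy in the two-phase setting. After the scale-invariant rescaling $u \mapsto u(x_0 + r\,\cdot)/r$ I may take $r = 1$ and $x_0 = 0$: the bound $\|f\|_\infty \leq \varepsilon^2\beta$ is preserved and the oscillation reads $b_0 - a_0 \leq \varepsilon$. I first produce a reference point $\bar x$ at definite (universal) distance from the approximate interface $\{x_n = -a_0\}$ on the positive side; such a point exists because the trapping gives $\{x_n \leq -b_0\} \subset \{u^+ = 0\} \subset \{x_n \leq -a_0\}$, so Lemma \ref{deltand} applies and the positive phase has room of universal size. By dichotomy I may assume $u(\bar x) \geq U_\beta(\bar x_n + (a_0 + b_0)/2)$, which will yield an improvement of $a_0$; the reverse inequality produces the improvement of $b_0$ by the symmetric argument (working in the negative phase with $U_\beta(\cdot + b_0) - u$).

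The nonnegative function $v := u - U_\beta(\cdot + a_0)$ satisfies $\Delta v = f$ on $\{u > 0\}$ and $v(\bar x) \geq \alpha(b_0-a_0)/2 \geq (b_0-a_0)/2$. The interior Harnack inequality with right-hand side of size $\varepsilon^2\beta$, applied on a ball $B_\rho(\bar x)$ of universal radius, yields $v \geq c_1(b_0-a_0)$ on $B_\rho(\bar x)$; the extra factor of $\varepsilon$ in the source bound (source scale $\varepsilon^2\beta$ versus oscillation scale $\varepsilon\beta$) makes the $f$-correction a genuinely lower-order contribution.

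The heart of the argument is a sliding family of strict comparison subsolutions. Let $\psi$ be a smooth radial cutoff on an annulus $B_R(\bar x) \setminus B_\rho(\bar x)$ containing $B_{1/20}(0)$, with $\psi \geq 1$ on $\partial B_\rho(\bar x)$, $\psi = 0$ on $\partial B_R(\bar x)$, $\Delta\psi \geq c_0 > 0$ (take $\psi(x) = c\,(|x-\bar x|^{-\gamma} - R^{-\gamma})$ with $\gamma > n-2$), and $\psi_n > 0$ on the region where the barrier's free boundary will lie. Set
\[
V_t(x) := U_\beta(x_n + a_0 + t\,\psi(x)), \qquad t \geq 0.
\]
A direct computation gives $\Delta V_t = \alpha t \Delta\psi$ on $\{V_t > 0\}$ and $\Delta V_t = \beta t \Delta\psi$ on $\{V_t < 0\}$; both are strictly greater than $f$ once $t \geq c\varepsilon$, since $\|f\|_\infty \leq \varepsilon^2\beta$. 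On $F(V_t)$ one computes $(V_{t,\nu}^+)^2 - (V_{t,\nu}^-)^2 = (\alpha^2 - \beta^2)|e_n + t\nabla\psi|^2 = |e_n + t\nabla\psi|^2 > 1$, using $\alpha^2 = 1 + \beta^2$ and $\psi_n > 0$. Hence $V_t$ is a strict comparison subsolution of \eqref{fb} in the sense of Definition \ref{defsub}.

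I slide $t$ upward from $t = 0$, where trivially $V_0 \leq u$. The maximal $\bar t$ with $V_t \leq u$ on $\overline{B_R(\bar x) \setminus B_\rho(\bar x)}$ cannot produce an interior contact (strict PDE inequality and the maximum principle) nor a contact on $F(u)$ (viscosity Definition \ref{defnhsol} forbids a strict subsolution from touching from below at a free boundary point), so the contact must sit on $\partial B_\rho(\bar x)$. But the Harnack step already guarantees $u \geq V_t$ there for every $t \leq c_2(b_0-a_0)$, so $\bar t \geq c_2(b_0-a_0)$. Since $\psi \geq c_3 > 0$ on $B_{1/20}(0)$, we conclude $u(x) \geq U_\beta(x_n + a_0 + c(b_0-a_0))$ there, i.e., $a_1 \geq a_0 + c(b_0-a_0)$ with $c$ universal, as required. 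The principal obstacle is the simultaneous verification of the strict subsolution conditions in both phases and across $F(V_t)$ with a single $\psi$: the identity $\alpha^2 - \beta^2 = 1$ and the one-extra-power-of-$\varepsilon$ source smallness are precisely what close all the inequalities. The non-degeneracy $\beta > 0$ is essential, and the degenerate case $\beta = 0$ is of a different nature and will be treated in the next subsection.
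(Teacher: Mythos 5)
Your overall strategy — reduce to a ``key lemma'' by a dichotomy at an interior reference point, propagate the gain by interior Harnack, and then spread it through the domain with a sliding family of strict comparison subsolutions whose free boundary condition uses $\alpha^2-\beta^2=1$ — is exactly the route the paper takes (Lemma \ref{main}). The free-boundary calculation $(V_{t,\nu}^+)^2-(V_{t,\nu}^-)^2=(\alpha^2-\beta^2)\lvert e_n+t\nabla\psi\rvert^2$ is correct, and the remark that the one extra power of $\varepsilon$ in $\|f\|_\infty\leq\varepsilon^2\beta$ is what makes the source a lower-order perturbation is the right heuristic.

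However, the specific barrier $V_t(x)=U_\beta(x_n+a_0+t\psi(x))$ has a genuine defect: its Laplacian in each phase is $\alpha t\,\Delta\psi$ (resp.\ $\beta t\,\Delta\psi$), which vanishes as $t\to 0$. Thus $V_t$ fails to be a strict subsolution until $t\gtrsim\varepsilon^2$. Your argument derives a contradiction from ``the first touching $\bar t$ cannot be interior nor on $F(u)$,'' but this requires $V_{\bar t}$ to already be a strict subsolution, and nothing forces $\bar t\gtrsim\varepsilon^2$. In fact $V_0=U_\beta(\cdot+a_0)$ is a \emph{solution} of the homogeneous problem, so it can touch $u$ from below in the interior with no contradiction when $f>0$; there is no way to rule out $\bar t=0$ or $\bar t$ small. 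The paper avoids this by \emph{decoupling} the bulge from the slide: its barrier is of the form $U_\beta(x_n-\varepsilon c_0\psi(x)+t\varepsilon)$, so the bulge $\varepsilon c_0\psi$ has fixed size and $\Delta v_t\geq\beta\varepsilon c_0 k(n)>\varepsilon^2\beta\geq\|f\|_\infty$ holds uniformly in $t$, including near $t=0$. With that choice every $v_t$ is a strict comparison subsolution, the touching point is forced onto the inner sphere for $\bar t<c_0$, and the interior Harnack contradiction closes the argument.

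A secondary but real omission: you do not split according to the position of the approximate interface. The paper handles $a_0<-1/5$, $a_0>1/5$ (both one-phase, settled by the ordinary interior Harnack inequality), and $|a_0|\leq 1/5$ (the two-phase case, after translating so the interface is at $\{x_n=0\}$) separately. Without that reduction, the reference point $\bar x=\frac15 e_n$ may be arbitrarily close to — or outside — the positive phase when $a_0$ is near $\pm 1/5$, so your application of Lemma \ref{deltand} and of interior Harnack on a ball $B_\rho(\bar x)\subset\{u>0\}\cap\{x_n>-a_0\}$ is not justified, and the sign of $\psi_n$ on $F(V_t)$ is no longer bounded away from zero. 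After translating to $a_0=0$ both points become harmless, as in the paper.
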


Before giving the proof we deduce an important consequence.

If $u$ satisfies \eqref{osc}
 with, say $r=1$, then we can apply Harnack inequality
repeatedly and obtain $$\label{osc2} U_\beta(x_n+ a_m) \leq u(x)
\leq U_\beta(x_n+ b_m) \quad \text{in $B_{20^{-m}}(x_0)$}, $$with
$$b_m-a_m \leq (1-c)^m\ep$$ for all $m$'s such that $$(1-c)^m 20^{m}\ep \leq \bar
\ep.$$ This implies that for all such $m$'s, the oscillation of
the function
$$\tilde u_\ep(x) = \begin{cases} \dfrac{u(x) -\alpha x_n }{\alpha\ep}  \quad \text{in $B_2^+(u) \cup F(u)$} \\ \ \\ \dfrac{u(x) -\beta x_n }{\beta\ep}  \quad \text{in $B_2^-(u)$} \end{cases}$$ in
$B_{r}(x_0), r=20^{-m}$ is less than $(1-c)^m= 20^{-\gamma m} =
r^\gamma$. Thus, the following corollary holds.

\begin{cor} \label{corollary}Let $u$ be as in Theorem $\ref{HI}$  satisfying \eqref{osc} for $r=1$. Then  in $B_1(x_0)$ $\tilde
u_\ep$ has a H\"older modulus of continuity at $x_0$, outside
the ball of radius $\ep/\bar \ep,$ i.e for all $x \in B_1(x_0)$, with $|x-x_0| \geq \ep/\bar\ep$
$$|\tilde u_\ep(x) - \tilde u_\ep (x_0)| \leq C |x-x_0|^\gamma.
$$
\end{cor}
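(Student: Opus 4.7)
The plan is to iterate Theorem \ref{HI} at dyadic scales $r_m := 20^{-m}$, starting from the hypothesis at $r=1$, which reads $U_\beta(x_n+a_0)\leq u \leq U_\beta(x_n+b_0)$ in $B_1(x_0)$ with $b_0-a_0\leq\ep$. Each Harnack step contracts the oscillation by a factor $(1-c)$ while shrinking the scale by $1/20$, so inductively I obtain monotone sequences $a_m\uparrow$, $b_m\downarrow$ with
\[
U_\beta(x_n+a_m)\leq u(x) \leq U_\beta(x_n+b_m) \ \text{in } B_{r_m}(x_0), \qquad b_m-a_m\leq(1-c)^m\ep.
\]
The $m$-th iteration is legal only so long as the rescaled flatness $(b_m-a_m)/r_m = (20(1-c))^m\ep$ does not exceed $\bar\ep$, i.e.\ up to the largest $m_0$ with $(20(1-c))^{m_0}\ep\leq\bar\ep$. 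Setting $\gamma\in(0,1)$ by $(1-c)=20^{-\gamma}$, this smallest admissible radius $r_{m_0}$ is of order $(\ep/\bar\ep)^{1/(1-\gamma)}$, and in particular at most $\ep/\bar\ep$.

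Next, I convert each sandwich bound on $u$ into an oscillation bound for the normalized function $\tilde u_\ep$ on $B_{r_m}(x_0)$. Using the identity $U_\beta(t)=\alpha t-(\alpha-\beta)t^-$ and dividing the bounds by $\alpha\ep$, in the positive phase $B_2^+(u)\cup F(u)$ I read off
\[
\frac{a_m}{\ep} - \frac{\alpha-\beta}{\alpha}\cdot\frac{(x_n+a_m)^-}{\ep} \leq \tilde u_\ep(x) \leq \frac{b_m}{\ep}.
\]
The correction $(x_n+a_m)^-/\ep$ is nonzero only on the thin transition strip $\{-b_m\leq x_n\leq -a_m\}$ of width $b_m-a_m\leq(1-c)^m\ep$, where it is bounded by $(1-c)^m$; the dual identity $U_\beta(t)=\beta t+(\alpha-\beta)t^+$ gives the symmetric picture on $B_2^-(u)$. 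Putting the two phases together,
\[
\mathrm{osc}_{B_{r_m}(x_0)}\tilde u_\ep\leq C(1-c)^m = C r_m^\gamma
\]
for a universal $C$.

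Finally, for $x\in B_1(x_0)$ with $|x-x_0|\geq\ep/\bar\ep$, I pick $m\geq 0$ with $r_{m+1}\leq|x-x_0|\leq r_m$. Since $r_{m_0}\leq\ep/\bar\ep\leq|x-x_0|$, this $m$ lies in the admissible range $0\leq m\leq m_0$, and the oscillation estimate yields
\[
|\tilde u_\ep(x)-\tilde u_\ep(x_0)|\leq C r_m^\gamma\leq C\cdot 20^\gamma|x-x_0|^\gamma,
\]
as required. The main obstacle I anticipate is the second step: one has to check that the piecewise definition of $\tilde u_\ep$ across the free boundary does not spoil the oscillation estimate, and this is precisely what the one-sided identities $U_\beta(t)=\alpha t-(\alpha-\beta)t^-$ and $U_\beta(t)=\beta t+(\alpha-\beta)t^+$ deliver, by showing that the $B_2^+$- and $B_2^-$-formulas for $\tilde u_\ep$ agree up to an $O((1-c)^m)$ error on the thin strip around $F(u)$, an error that is absorbed in the oscillation bound.
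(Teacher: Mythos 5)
Your overall scheme --- iterate Theorem \ref{HI} at scales $20^{-m}$ for all admissible $m$, convert the resulting sandwich on $u$ into an oscillation bound for $\tilde u_\eps$, and then select the scale $r_m\sim|x-x_0|$ --- is exactly what the paper does; its proof is the paragraph sitting between Theorem \ref{HI} and Corollary \ref{corollary}, and your scale bookkeeping (the choice $(1-c)=20^{-\gamma}$ and the check that $|x-x_0|\geq\eps/\bar\eps$ keeps $m$ in the admissible range) is fine.

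The conversion step, however, is not argued correctly as written. First, a sign slip: since $t^{+}=t+t^{-}$ one has $U_\beta(t)=\alpha t+(\alpha-\beta)t^{-}$, not $\alpha t-(\alpha-\beta)t^{-}$; with the correct sign the correction is nonnegative, and in fact $\tilde u_\eps\geq a_m/\eps$ in $B_2^+(u)\cup F(u)$ with no correction term at all. More substantively, the ``symmetric picture on $B_2^-(u)$'' that you invoke is false: there you must divide by $\beta\eps$ rather than $\alpha\eps$, so the correction reads $\frac{\alpha-\beta}{\beta}\cdot\frac{(x_n+b_m)^{+}}{\eps}$, which is controlled only by $\frac{\alpha-\beta}{\beta}(1-c)^m$. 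Since $\beta$ is only assumed to satisfy $0<\beta\leq L$, the factor $\frac{\alpha-\beta}{\beta}$ is not universally bounded, and your constant $C$ is not universal as claimed --- precisely the point at issue when this corollary is fed into the compactness step of Lemma \ref{improv1}.

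The clean way to obtain the oscillation bound, with no extra factor, is to use that $U_\beta$ is strictly increasing for $\beta>0$: the sandwich $U_\beta(x_n+a_m)\leq u(x)\leq U_\beta(x_n+b_m)$ is equivalent to writing $u(x)=U_\beta(x_n+\sigma(x))$ with $a_m\leq\sigma(x)\leq b_m$, and then $\tilde u_\eps(x)=\sigma(x)/\eps$ identically. Indeed in $\{u>0\}$ one has $x_n+\sigma>0$, so $u=\alpha(x_n+\sigma)$ and $(u-\alpha x_n)/(\alpha\eps)=\sigma/\eps$; in $\{u<0\}$ one has $x_n+\sigma<0$, so $u=\beta(x_n+\sigma)$ and $(u-\beta x_n)/(\beta\eps)=\sigma/\eps$; on $F(u)$ both formulas give $-x_n/\eps$. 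Hence the oscillation of $\tilde u_\eps$ over $B_{20^{-m}}(x_0)$ is at most $(b_m-a_m)/\eps\leq(1-c)^m$, with no hidden constant, and the remainder of your argument concludes.
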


The proof of the Harnack inequality relies on the following lemma.

\begin{lem}\label{main}There exists
a universal constant $\bar \ep>0$ such that if $u$ satisfies \begin{equation*}  u(x) \geq U_\beta(x), \quad \text{in $B_1$}\end{equation*} with \be\label{nond}\|f\|_{L^\infty(B_1)} \leq \ep^2 \beta, \quad 0 < \beta \leq L, \ee then if at $\bar x=\dfrac{1}{5}e_n$ \be\label{u-p>ep2}
u(\bar x) \geq U_\beta(\bar x_n + \ep), \ee then \be u(x) \geq
U_\beta(x_n+c\eps), \quad \text{in $\overline{B}_{1/2},$}\ee for some
$0<c<1$ universal. Analogously, if $$u(x) \leq U_\beta(x), \quad \text{in $B_1$}$$ and $$ u(\bar x) \leq U_\beta(\bar x_n - \eps),$$ then $$ u(x) \leq U_\beta(x_n - c \ep), \quad
\text{in $\overline{B}_{1/2}.$}$$
\end{lem}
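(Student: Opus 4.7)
The plan follows De Silva's scheme: an interior Harnack gain strictly inside $\Omega^+(u)$, followed by propagation via a strict comparison subsolution built from a radial barrier. I treat only the first claim; the second is symmetric.

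\textbf{Step 1 (interior gain).} Put $w(x):=u(x)-\alpha x_n$. Since $u\ge U_\beta$ and $U_\beta=\alpha x_n$ on $\{x_n\ge 0\}$, $w\ge 0$ on $\{x_n\ge 0\}$; moreover $B_{1/10}(\bar x)\subset\{x_n>1/10\}\subset\Omega^+(u)$, on which $\Delta w=f$ with $\|f\|_\infty\le\eps^2\beta$ and $w(\bar x)\ge\alpha\eps$. The inhomogeneous interior Harnack inequality produces a universal $c_0>0$ with $w\ge c_0\eps$ on $B_{1/20}(\bar x)$, provided $\bar\eps$ is small enough to absorb the $O(\eps^2)$ error coming from $f$.

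\textbf{Step 2 (radial barrier).} I would construct a corrector $\phi\in C^1(\bar B_{3/4})\cap C^2(\bar B_{3/4}\setminus\{x_n=0\})$ with universal constants such that:
(i) $\phi\equiv 0$ on $\partial B_{3/4}$ and $\phi\ge 1$ on $B_{1/20}(\bar x)$;
(ii) $\phi\ge \kappa>0$ on $\bar B_{1/2}$;
(iii) $\partial_n\phi\ge\mu>0$ on $\{x_n=0\}\cap\bar B_{1/2}$;
(iv) $\Delta\phi\ge c_1$ on $\{x_n>0\}\cap(\bar B_{3/4}\setminus B_{1/20}(\bar x))$;
(v) $\Delta\phi\le -c_1$ on $\{x_n<0\}\cap\bar B_{3/4}$.
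A jump of $\Delta\phi$ across $\{x_n=0\}$ is allowed because $\phi$ is only $C^1$ there; the construction is a routine piecewise gluing of two explicit radial profiles (e.g.\ of power-law type above, of quadratic type below) with centers chosen on opposite sides of the hyperplane so that (iii) holds.

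\textbf{Step 3 (subsolution and sliding).} Set $v(x):=U_\beta(s(x))$ with $s(x):=x_n+\eta\eps\phi(x)$ and $\eta>0$ small universal. Away from $F(v)=\{s=0\}$,
\[
\Delta v = \alpha\eta\eps\Delta\phi \ \ \text{in }\{s>0\}, \qquad \Delta v = -\beta\eta\eps\Delta\phi \ \ \text{in }\{s<0\},
\]
so (iv)--(v) combined with $\|f\|_\infty\le\eps^2\beta$ give $\Delta v>f$ strictly in each phase, for $\eps\le\bar\eps$. On $F(v)$, the $C^1$ matching makes $|\nabla s|$ well-defined across the interface, and using $\alpha^2-\beta^2=1$,
\begin{equation*}
(v_\nu^+)^2-(v_\nu^-)^2=(\alpha^2-\beta^2)|\nabla s|^2=1+2\eta\eps\,\partial_n\phi+O(\eps^2)>1
\end{equation*}
by (iii). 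Hence $v$ is a strict comparison subsolution of \eqref{fb} on $B_{3/4}$. Now slide: $v_t(x):=U_\beta(s(x)-t)$ is strictly decreasing in $t$ with $v_t\to-\infty$, so $\bar t:=\inf\{t\ge 0:v_t\le u \text{ on }\bar B_{3/4}\}$ is finite. Choose $\eta\le c_0/(2\alpha)$: on $B_{1/20}(\bar x)$, $v_0=\alpha x_n+\alpha\eta\eps\le u-c_0\eps/2$ by Step 1, while on $\partial B_{3/4}$, $\phi\equiv 0$ gives $v_0=U_\beta(x_n)\le u$; combined with the monotonicity estimate $v_{\bar t}\le v_0-\beta\bar t$, this forces $v_{\bar t}<u$ strictly on both $B_{1/20}(\bar x)$ and $\partial B_{3/4}$ whenever $\bar t>0$. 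Assume $\bar t>0$ and let $x_0$ be a contact point. Then $x_0\in\bar B_{3/4}\setminus B_{1/20}(\bar x)$ and $x_0\notin\partial B_{3/4}$, so $x_0$ lies in the open set $B_{3/4}\setminus\bar B_{1/20}(\bar x)$; it cannot lie in the interior of either phase of $v_{\bar t}$ (the strict PDE inequality contradicts the viscosity supersolution property of $u$), and it cannot lie on $F(v_{\bar t})$ (strict free boundary inequality versus Definition \ref{defnhsol}(2)). Hence $\bar t\le 0$, so $v=v_0\le u$ on $\bar B_{3/4}$, and (ii) yields $u(x)\ge U_\beta(x_n+\eta\kappa\eps)$ on $B_{1/2}$, closing the argument with $c:=\eta\kappa$.

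\textbf{Main obstacle.} The genuinely two-phase difficulty is the corrector in Step 2. Unlike the one-phase setting of De Silva, the PDE forces $\Delta\phi$ to have opposite signs on the two sides of $\{x_n=0\}$ (so that $\Delta v$ dominates the source term in both phases), while the free boundary condition simultaneously demands uniformly positive $\partial_n\phi$ across the interface. A single smooth radial profile cannot satisfy both requirements; one must build $\phi$ piecewise and arrange a $C^1$-but-not-$C^2$ match along the hyperplane so that the gradient discontinuity does not spoil (iii) and the linearized free boundary inequality.
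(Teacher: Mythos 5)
There is a sign error in Step 3 that, if carried through, would break the argument, and it is exactly this error that creates the phantom ``main obstacle'' of Step 2. With the paper's convention $t^- = -\min\{t,0\}$, one has $U_\beta(s) = \beta s$ for $s < 0$ (not $-\beta s$), so in the negative phase $\Delta v = \beta\,\Delta s = \beta\eta\eps\,\Delta\phi$, \emph{not} $-\beta\eta\eps\,\Delta\phi$. The subsolution inequality $\Delta v > f$ with $\|f\|_\infty \leq \eps^2\beta$ therefore demands $\Delta\phi$ bounded below by a positive constant in \emph{both} phases, not opposite signs. Your property (v) ($\Delta\phi \leq -c_1$ in $\{x_n<0\}$) would give $\Delta v \leq -\beta\eta\eps c_1 < 0$; since $f$ may be as large as $+\eps^2\beta$, the barrier $v$ would fail to be a strict subsolution in the negative phase and the sliding argument could not reach a contradiction. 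The ``genuinely two-phase difficulty'' you describe does not exist: a single smooth radial profile suffices, exactly as in De Silva's one-phase argument. The paper takes $w(x) = c(|x-\bar x|^{-\gamma} - (3/4)^{-\gamma})$ with $\gamma > n-2$, extended $\equiv 1$ on $\overline B_{1/20}(\bar x)$, so that $\Delta w \geq k(n) > 0$ on the annulus $A = B_{3/4}(\bar x)\setminus\overline B_{1/20}(\bar x)$; with $\psi = 1-w$ the family $v_t = U_\beta(x_n - \eps c_0\psi + t\eps)$ satisfies $\Delta v_t = \alpha\eps c_0\Delta w$ in the positive phase and $\Delta v_t = \beta\eps c_0\Delta w$ in the negative phase, both $\geq \beta\eps c_0 k(n) > \eps^2\beta$ for $\eps$ small. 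The sign $\psi_n<0$ near $\{x_n=0\}$, which drives the free boundary inequality, comes for free from the radial profile centered at $\bar x = \tfrac15 e_n$.

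Aside from this, the architecture (Harnack gain on $B_{1/20}(\bar x)$; sliding a family of strict comparison subsolutions; excluding the contact point from $\partial B_{3/4}$, from $\overline B_{1/20}(\bar x)$, and from the annulus via the two strict inequalities) matches the paper's — you slide downward from $+\infty$ where the paper slides upward from $0$, an inessential re-parametrisation. A second, independent defect of the piecewise construction: the set $\{x_n=0\}$ where $\Delta\phi$ would jump lies generically strictly inside $\Omega^+(v)$ or $\Omega^-(v)$, because $F(v) = \{x_n+\eta\eps\phi = 0\}$ is a nearby but distinct graph; then $v\notin C^2(\overline{\Omega^+(v)})\cap C^2(\overline{\Omega^-(v)})$, so $v$ is not an admissible comparison subsolution in the sense of Definition \ref{defsub}.
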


\begin{proof} We prove the first statement. For notational simplicity we drop the sub-index $\beta$ from $U_\beta.$

Let \be\label{w}w=c(|x-\bar x|^{-\gamma} - (3/4)^{-\gamma})\ee be defined in the closure of the annulus
$$A:=  B_{3/4}(\bar x) \setminus \overline{B}_{1/20}(\bar x).$$
The constant $c$ is such that
$w$ satisfies the
boundary conditions
  $$\begin{cases}
    w =0 & \text{on $\p B_{3/4}(\bar x)$}, \\
    w=1 & \text{on $\p B_{1/20}(\bar x)$}.
  \end{cases} $$
 Then, for a fixed $\gamma > n-2,$

 $$\Delta w  \geq k(\gamma, n)=k(n) > 0, \quad 0 \leq w \leq 1  \quad \text{on $A$.}$$
Extend $w$ to be equal
to 1 on $B_{1/20}(\bar x).$

Notice that since $x_n >0$ in $B_{1/10}(\bar x)$  and $u \geq U$ in $B_1$ we get
\begin{equation*} B_{1/10}(\bar x) \subset B_1^+(u). \end{equation*}

Thus $u-U \geq
0$ and solves $\Delta (u-U) =f$  in $B_{1/10}(\bar x)$ and we can apply Harnack inequality
to obtain
\be\label{HInew} u(x)
- U(x) \geq c(u(\bar x)- U(\bar x)) - C \|f\|_{L^\infty} \quad \text{in $\overline B_{1/20}(\bar x)$}. \ee
From the assumptions \eqref{nond} and  \eqref{u-p>ep2}  we conclude that (for $\ep$ small enough)
\be\label{u-p>cep} u
- U \geq \alpha c\ep - C \alpha \ep^2 \geq \alpha c_0\ep \quad \text{in $\overline B_{1/20}(\bar x)$}. \ee
Now set $\psi =1-w$ and

\begin{equation*} v(x)= U(x_n - \ep c_0 \psi(x)), \quad x \in \overline B_{3/4}(\bar x),\end{equation*} and for $t
\geq 0,$
$$v_t(x)= U(x_n - \ep c_0 \psi(x)+t\ep), \quad x \in \overline B_{3/4}(\bar x).
$$

Then,

$$  v_0(x)=U(x_n - \ep c_0 \psi(x)) \leq
U(x) \leq u(x) \quad x \in \overline B_{3/4}(\bar x).$$

Let $\bar t$ be the largest $t \geq 0$ such that
$$v_{t}(x) \leq u(x) \quad \text{in $\overline B_{3/4}(\bar x)$}.$$
% Thus at some
%$\overline B_{3/4}(\bar x)$ we have $$v_{\bar t}(\tilde x) =
%u(\tilde x).$$

We want to show that $\bar t \geq c_0.$ Then we get the desired statement. Indeed,
$$u(x) \geq v_{\bar t}(x) = U(x_n - \ep c_0 \psi + \bar t \ep) \geq U(x_n + c\ep) \quad \text{in $B_{1/2}  \subset \subset B_{3/4}(\bar x)$}$$ with $c$ universal. In the last inequality we used that $\|\psi\|_{L^\infty(B_{1/2})} <1.$

Suppose $\bar t < c_0$. Then at some
$\tilde x \in \overline B_{3/4}(\bar x)$ we have $$v_{\bar t}(\tilde x) =
u(\tilde x).$$ We show that such touching point can only occur on $\overline B_{1/20}(\bar x).$
Indeed, since $w\equiv 0$ on $\p B_{3/4}(\bar x)$ from the definition of $v_t$ we get that for $\bar t < c_0$

$$v_{\bar t}(x) =U(x_n -\ep c_0\psi(x) + \bar t \ep) < U(x) \leq u(x)\quad \textrm{on  $\p B_{3/4}(\bar x)$}.$$

We now show that $\tilde x$ cannot belong to the annulus $A$.
Indeed, $$ \Delta v_{\bar t}\geq \beta \ep c_0 k(n) > \ep^2 \beta \geq \|f\|_{\infty}, \quad \textrm{in
$A^+(v_{\bar t}) \cup A^-(v_{\bar t})$}$$ for $\ep$ small enough.

Also,
$$(v_{\bar t}^+)_\nu^2 - (v_{\bar t}^-)_\nu^2 = 1 + \ep^2 c_0^2 |\nabla \psi|^2  - 2\ep c_0 \psi_n \quad \text{on $F(v_{\bar t}) \cap A$}.$$

Thus,

$$(v_{\bar t}^+)_\nu^2 - (v_{\bar t}^-)_\nu^2 > 1  \quad \text{on $F(v_{\bar t}) \cap A$}$$
as long as
$$\psi_n <0  \quad \text{on $F(v_{\bar t}) \cap A$}.$$
This can be easily verified from the formula for $\psi$ (for $\ep$ small enough.)

 Thus,
$v_{\bar t}$ is a strict subsolution to $\eqref{fb}$ in $A$ which lies below $u$, hence by the definition of viscosity solution,
$\tilde x$ cannot belong to $A.$

Therefore, $\tilde x \in \overline B_{1/20}(\bar x)$ and
$$u(\tilde x)=v_{\bar t}(\tilde x)  = U(\tilde x_n + \bar t \ep) \leq U(\tilde x) + \alpha \bar t \eps < U(\tilde x) + \alpha c_0 \ep$$ contradicting \eqref{u-p>cep}.

The proof of the second statement follows from a similar argument.
\end{proof}

We can now prove our Theorem \ref{HI}.

\vspace{3mm}

\textit{Proof of Theorem $\ref{HI}$.} Assume without loss of generality that $x_0=0, r=1.$ We distinguish three cases.

\vspace{1mm}

{\it Case 1.} $a_0 < - 1/5.$ In this case it follows from \eqref{osc} that $B_{1/10} \subset \{u<0\}$ and
$$0 \leq v(x):= \frac{u(x) - \beta(x_n+a_0)}{\beta \eps} \leq 1$$
with $$|\Delta v| \leq \eps \quad \text{in $B_{1/10}$.}$$ The desired claim follows from standard Harnack inequality applied to the function $v$.

\vspace{1mm}

{\it Case 2.} $a_0 > 1/5.$ In this case it follows from \eqref{osc} that $B_{1/5} \subset \{u>0\}$ and
$$0 \leq v(x):= \frac{u(x) - \alpha(x_n+a_0)}{\alpha \eps} \leq 1$$
with $$|\Delta v| \leq \eps \quad \text{in $B_{1/5}$.}$$ Again, the desired claim follows from standard Harnack inequality for $v$.

\vspace{1mm}

{\it Case 3.} $|a_0| \leq 1/5.$ Assumption \eqref{osc} gives that
$$U_\beta(x_n + a_0) \leq u(x) \leq U_\beta(x_n+a_0+\eps) \quad \text{in $B_1$.}$$ Assume that  (the other case is treated similarly)
\be\label{top} u(\bar x) \geq U_{\beta}(\bar x_n + a_0 + \frac{\eps}{2}), \quad \bar x=\frac{1}{5}e_n.\ee

Call $$v(x) := u(x-a_0 e_n), \quad x \in B_{4/5}.$$ Then the inequality above reads
$$U_\beta(x_n) \leq v(x) \leq U_\beta(x_n+\eps) \quad \text{in $B_{4/5}$.}$$From \eqref{top} $$v(\bar x) \geq U_\beta(\bar x_n + \frac{\eps}{2}).$$ Then, by Lemma \ref{main}, $$v(x) \geq U_\beta(x_n + c\eps), \quad \text{in $B_{2/5}$}$$ which gives the desired improvement
$$u(x) \geq U_{\beta}(x+a_0+ c\eps) \quad \text{in $B_{3/5}.$}$$
\qed

\subsection{Degenerate case}

In this case, the negative part of $u$ is negligible and the positive part is close to a one-plane solution (i.e. $\beta=0$).

\begin{thm}[Harnack inequality]\label{HIdg}There exists a universal constant $\bar
\ep$,  such that if $u$ satisfies at some point $x_0 \in B_2$

\be\label{oscdg} U_0(x_n+ a_0) \leq u^+(x) \leq U_0(x_n+ b_0) \quad
\text{in $B_r(x_0) \subset B_2,$}\ee
with  $$\|u^-\|_{L^\infty} \leq \ep^2, \quad \|f\|_{L^\infty} \leq \ep^4,$$ and
$$b_0 - a_0 \leq \ep r, $$ for some $\ep \leq \bar \ep,$ then
$$ U_{0}(x_n+ a_1) \leq u^+(x) \leq U_0(x_n+ b_1) \quad \text{in
$B_{r/20}(x_0)$},$$ with
$$a_0 \leq a_1 \leq b_1 \leq b_0, \quad
b_1 -  a_1\leq (1-c)\ep r, $$ and $0<c<1$ universal.
\end{thm}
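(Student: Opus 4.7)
I would follow closely the structure of the non-degenerate Harnack inequality (Theorem~\ref{HI}). The plan is to reduce Theorem~\ref{HIdg} to a degenerate analogue of Lemma~\ref{main} and then run the same three-case splitting on $a_0$ (namely $a_0 < -1/5$, $a_0 > 1/5$, and $|a_0| \leq 1/5$) to conclude.

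The key lemma to establish reads: if $u^+ \geq U_0(x_n)$ in $B_1$ with $\|u^-\|_\infty \leq \ep^2$ and $\|f\|_\infty \leq \ep^4$, and if $u^+(\bar x) \geq U_0(\bar x_n + \ep)$ at $\bar x = \tfrac{1}{5}e_n$, then $u^+ \geq U_0(x_n + c\ep)$ on $\overline B_{1/2}$ for some universal $c > 0$ (and dually for the upper bound). The proof would proceed in two steps. First, since $u \geq 1/10$ on $B_{1/10}(\bar x)$ by the flatness hypothesis, this ball sits inside $\Omega^+(u)$, and standard Harnack applied to $u - U_0 \geq 0$ (which solves $\Delta(u - U_0) = f$) yields $u - U_0 \geq c_0 \ep - C\ep^4 \geq c_0\ep/2$ on $\overline B_{1/20}(\bar x)$; this is where the extra factor $\ep^4$ (as opposed to $\ep^2$) in the hypothesis is used, providing the room to absorb both $\|f\|_\infty$ and later the influence of $u^-$. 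Second, with $w$ and $\psi = 1-w$ as in Lemma~\ref{main}, I would build the family of barriers $v_t(x) = U_0(x_n - \ep c_0 \psi(x) + t\ep)$ on $\overline B_{3/4}(\bar x)$, slide it upward to its first contact point $\tilde x$ with $u$, and aim for $\bar t \geq c_0$.

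The touching analysis then mirrors the non-degenerate case. On $\partial B_{3/4}(\bar x)$ one has $v_t < u$ strictly for $\bar t < c_0$ (since $\psi \equiv 1$ there and $u^+ \geq U_0(x_n)$, with a margin large enough to absorb the small $u^-$). In the annulus $A = B_{3/4}(\bar x) \setminus \overline B_{1/20}(\bar x)$ the computation $\Delta v_t = \ep c_0 \Delta w \geq \ep c_0 k(n) > \ep^4 \geq \|f\|_\infty$ rules out contact in $\Omega^+(v_t) \cap A$, and the strict free boundary inequality $(v_{t,\nu}^+)^2 > 1$ on $F(v_t) \cap A$ follows from $\psi_n < 0$ exactly as before. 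So $\tilde x \in \overline B_{1/20}(\bar x)$, contradicting the Harnack estimate above, and the key lemma follows.

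The main obstacle is reconciling the one-plane barrier $v_t \geq 0$ with the fact that $u$ has a genuine (if tiny) negative part, so the pointwise inequality $v_t \leq u$ cannot be directly arranged at points where $u < 0$. I would handle this by perturbing the barrier on the negative side, taking instead $v_t(x) = U_\mu(x_n - \ep c_0 \psi(x) + t\ep)$ with $\mu = \mu(\ep) > 0$ small (for instance $\mu = \ep$), chosen so that: (a) its negative-side slope $-\mu$ is steep enough to keep $v_0 \leq u$ across the thin strip where $u^-$ lives, using $\|u^-\|_\infty \leq \ep^2$; and (b) the correction to the free boundary condition, of order $\mu^2$, is easily dominated by the gain $\Theta(\ep c_0)$ from $\psi_n < 0$, so $(v_{t,\nu}^+)^2 - (v_{t,\nu}^-)^2 > 1$ remains strict. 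Alternatively, one may keep the pure one-plane $v_t$ and observe that a contact point $\tilde x$ with $v_t(\tilde x) = u(\tilde x) = 0$ cannot lie in $\Omega^-(u)$: since $v_t \geq 0$ would then force $v_t \equiv 0$ near $\tilde x$, contradicting $\tilde x \in F(v_t)$. Once these wrinkles are dealt with, the three-case reduction to the key lemma proceeds verbatim as in the proof of Theorem~\ref{HI}: the cases $a_0 < -1/5$ and $a_0 > 1/5$ reduce to classical interior Harnack (applied to $u$ once it is known to be one-signed on a definite ball, using $\|u^-\|_\infty \leq \ep^2$ for the first case), while $|a_0| \leq 1/5$ is exactly the key lemma after translation.
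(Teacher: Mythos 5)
Your overall strategy is the right one and matches the paper's: build an analogue of Lemma~\ref{main} with a sliding barrier in the annulus, rule out contact in the annulus by the strict sub/supersolution property, and reduce Theorem~\ref{HIdg} to it via the same three-case split on $a_0$. However, both of the concrete fixes you propose for the ``tiny negative part'' obstacle have a genuine flaw at the level of the \emph{initial ordering} $v_0\leq u$ on $\overline B_{3/4}(\bar x)$, and you are also missing a key pointwise refinement of the bound on $u^-$.

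Taking $v_t=U_\mu(x_n-\ep c_0\psi+t\ep)$ with $\mu>0$ means, by the paper's convention $U_\mu(s)=\sqrt{1+\mu^2}\,s^+-\mu s^-$, that the \emph{positive} slope of your barrier is $\alpha_\mu=\sqrt{1+\mu^2}>1$. But in the annulus $A$ the only information you have on the positive phase is $u\geq x_n^+$ (the Harnack boost $u\geq x_n+c_0\ep$ is available only on $\overline B_{1/20}(\bar x)$). Near $\partial B_{1/20}(\bar x)$ one has $\psi\to 0$ while $x_n\approx 1/5$, so $\alpha_\mu(x_n-\ep c_0\psi)>x_n$ there, and $v_0\leq u$ fails in a thin shell just outside $\overline B_{1/20}(\bar x)$. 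The paper avoids this by hand-building an asymmetric barrier with positive slope \emph{exactly} $1$, namely $v_t=(x_n-\ep c_0\psi+t\ep)^+-\ep^2C_1(x_n-\ep c_0\psi+t\ep)^-$, for which $v_t^+\leq x_n^+\leq u$ in the positive phase is immediate. Your second alternative (keep $v_t\geq 0$ and argue about the touching point) is a non-starter for the same reason: if $u<0$ somewhere in $B_{3/4}(\bar x)$, then $v_t\geq 0>u$ there, so the order $v_t\leq u$ cannot even be arranged, let alone slid.

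There is a second, independent gap. On the negative side you want $v_0\leq u$ where $u<0$, and you propose to get this from $\|u^-\|_\infty\leq\ep^2$ alone. With negative slope $\mu=\ep$, the barrier's minimum over the transition strip $\{-\ep(1-c_0\psi)<x_n-\ep c_0\psi<0\}$ is only about $-\ep^2$, which is exactly the order of the crude bound on $u^-$; the sign of $v_0-u$ there is not determined. The paper resolves this by first deriving the \emph{pointwise linear decay} $u^-\leq C\ep^2\,x_n^-$ in $B_{19/20}$ via comparison with a supersolution on $B_1\cap\{x_n<0\}$, and only then choosing $C_1\geq C$ so that $\ep^2C_1(x_n-\ep c_0\psi)\leq C\ep^2 x_n\leq u$ for $x_n<0$. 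Without that preliminary comparison estimate (which you do not mention), the ordering $v_0\leq u$ across the strip where $u<0$ does not follow. Once you adopt both corrections — positive slope $1$ with a custom small negative slope, and the barrier estimate $u^-\leq C\ep^2 x_n^-$ — your argument becomes essentially the same as the paper's Lemma~\ref{main2}, including the computation $(v_{\bar t}^+)_\nu^2-(v_{\bar t}^-)_\nu^2=(1-\ep^4C_1^2)(1+\ep^2c_0^2|\nabla\psi|^2-2\ep c_0\psi_n)>1$ and $\Delta v_{\bar t}\geq \ep^3 c_0 k(n)>\ep^4\geq\|f\|_\infty$ in the annulus.
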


We can argue as in the nondegenerate case and get the following result.

\begin{cor} \label{corollary4}Let $u$ be as in Theorem \ref{HI}  satisfying \eqref{oscdg} for $r=1$. Then  in $B_1(x_0)$ $$\tilde
u_\ep:= \frac{u^+(x) - x_n}{\ep}$$ has a H\"older modulus of continuity at $x_0$, outside
the ball of radius $\ep/\bar \ep,$ i.e for all $x \in B_1(x_0)$, with $|x-x_0| \geq \ep/\bar\ep$
$$|\tilde u_\ep(x) - \tilde u_\ep (x_0)| \leq C |x-x_0|^\gamma.
$$
\end{cor}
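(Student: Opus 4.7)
The argument mirrors the derivation of Corollary \ref{corollary} from Theorem \ref{HI}: iterate the Harnack inequality along a geometric sequence of scales, obtain geometric decay of the oscillation parameter $b_m-a_m$, and translate this into a pointwise H\"older estimate for $\tilde u_\eps$. The only change is that Theorem \ref{HIdg} plays the role of Theorem \ref{HI} and the one-plane solution $U_0$ replaces the two-plane $U_\beta$.

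Concretely, starting from \eqref{oscdg} at scale $r=1$, I would apply Theorem \ref{HIdg} recursively to produce a nested sequence $[a_m,b_m]\supset [a_{m+1},b_{m+1}]$ with
\[
U_0(x_n+a_m)\leq u^+(x)\leq U_0(x_n+b_m)\quad\text{in }B_{20^{-m}}(x_0),\qquad b_m-a_m\leq (1-c)^m\eps.
\]
The effective normalized parameter at step $m$ is $\eps_m:=(b_m-a_m)/20^{-m}=(20(1-c))^m\eps$, and the next application of Theorem \ref{HIdg} is legal provided (i) $\eps_m\leq\bar\eps$ and (ii) $\|u^-\|_\infty\leq\eps_m^2$, $\|f\|_\infty\leq\eps_m^4$. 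Since the universal constant $c$ (traceable to a standard interior Harnack constant in the proof of Theorem \ref{HIdg}) can be taken small, one has $20(1-c)\geq 1$, so $\eps_m\geq\eps$ and (ii) is automatic from the initial $\|u^-\|_\infty\leq\eps^2$, $\|f\|_\infty\leq\eps^4$. Condition (i) is therefore the only restriction, and the iteration continues as long as $(20(1-c))^m\eps\leq\bar\eps$, down to a minimal scale $r_m=20^{-m}$ of order $\eps/\bar\eps$.

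To translate the trapping estimates into the claimed H\"older bound for $\tilde u_\eps=(u^+-x_n)/\eps$, I would argue that on the positive side of $B_{r_m}(x_0)$ the inequalities above give $u^+(x)-x_n\in[a_m,b_m]$, so $\tilde u_\eps$ lies in an interval of length $(1-c)^m=r_m^\gamma$, where $\gamma:=\log(1/(1-c))/\log 20$; on the negative side, where $u^+\equiv 0$, one has $\tilde u_\eps=-x_n/\eps$, which matches continuously across $F(u)$ and differs from the positive-side value only through the width $(1-c)^m\eps$ of the strip $\{-b_m\leq x_n\leq -a_m\}$ that contains $F(u)$. A standard dyadic interpolation across the scales $20^{-m}\in[\eps/\bar\eps,1]$ then yields $|\tilde u_\eps(x)-\tilde u_\eps(x_0)|\leq C|x-x_0|^\gamma$ for every $x\in B_1(x_0)$ with $|x-x_0|\geq \eps/\bar\eps$.

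The only genuine subtlety here, and the reason for the scale restriction in the statement, is that Theorem \ref{HIdg} ceases to apply once $\eps_m$ exceeds $\bar\eps$; no further improvement can be squeezed out of the Harnack scheme below that scale, which is precisely why the inner ball of radius $\eps/\bar\eps$ must be excluded. Apart from this intrinsic limitation, the degenerate iteration is a word-for-word analog of the non-degenerate one.
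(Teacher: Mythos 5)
Your argument is correct and is exactly the paper's own: Corollary 4.4 is obtained from Theorem 4.3 by the same iteration of the Harnack inequality that turns Theorem 4.1 into Corollary 4.2, and the paper simply cites that argument verbatim ("We can argue as in the nondegenerate case"). Your observation that $\|u^-\|_\infty\leq\eps^2$ and $\|f\|_\infty\leq\eps^4$ remain valid at every intermediate scale because $\eps_m=(20(1-c))^m\eps\geq\eps$ is the only added bookkeeping the degenerate case requires, and you have it right; one small clarification is that $20(1-c)\geq 1$ is automatic for any universal $0<c<1$ with $c\leq 19/20$ rather than something that needs to be arranged, and that $\tilde u_\eps$ is only meant to be read on $\overline{B_1^+(u)}$, so the matching across the free boundary you worry about is not really needed.
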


The proof of the Harnack inequality can be deduced from  the following lemma, as in the one-phase case \cite{D}.

\begin{lem}\label{main2}There exists
a universal constant $\bar \ep>0$ such that if $u$ satisfies
\begin{equation*}  u^+(x) \geq U_0(x), \quad \text{in $B_1$}
\end{equation*}
with \be\label{dg} \|u^-\|_{L^\infty} \leq \ep^2, \quad \|f\|_{L^\infty} \leq \ep^4, \ee then if at $\bar x=\dfrac{1}{5}e_n$ \be\label{u-p>ep2d}
u^+(\bar x) \geq U_0(\bar x_n + \ep), \ee then \be u^+(x) \geq
U_0(x_n+c\eps), \quad \text{in $\overline{B}_{1/2},$}\ee for some
$0<c<1$ universal. Analogously, if $$u^+(x) \leq U_0(x), \quad \text{in $B_1$}$$ and $$ u^+(\bar x) \leq U_0(\bar x_n - \eps),$$ then $$ u^+(x) \leq U_0(x_n - c \ep), \quad
\text{in $\overline{B}_{1/2}.$}$$
\end{lem}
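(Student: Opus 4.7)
My plan is to mimic the proof of Lemma \ref{main}, with two degenerate-specific modifications: the sliding family is built from a two-plane $U_\beta$ of slope $\beta = K\eps$ (a small perturbation of the one-plane $U_0$), and the universal constant $K$ is calibrated against the two smallness scales $\|u^-\|_\infty \leq \eps^2$ and $\|f\|_\infty \leq \eps^4$ of \eqref{dg}. Throughout I write $\bar x = \frac{1}{5}e_n$ and let $c_0$ denote a universal positive constant whose value may change from line to line. The second statement is handled symmetrically by sliding from above, so I only treat the first.

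First, the hypothesis $u^+ \geq U_0$ on $B_1$ forces $u > 0$ on $\{x_n > 0\} \cap B_1$, hence $B_{1/10}(\bar x) \subset \Omega^+(u)$. On this ball $u - U_0 \geq 0$ satisfies $\Delta(u - U_0) = f$, and the interior Harnack inequality combined with \eqref{u-p>ep2d} yields the analogue of \eqref{u-p>cep},
$$u(x) - U_0(x) \geq c\bigl(u(\bar x) - U_0(\bar x)\bigr) - C\|f\|_\infty \geq c_0 \eps \quad \text{on } \overline B_{1/20}(\bar x),$$
the error $C\|f\|_\infty \leq C\eps^4$ being absorbed into the gain. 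I then transplant the radial barrier $w$ of Lemma \ref{main} to the annulus $A = B_{3/4}(\bar x) \setminus \overline B_{1/20}(\bar x)$, set $\psi = 1 - w$ extended by $0$ inside $B_{1/20}(\bar x)$, and consider the sliding family
$$v_t(x) = U_\beta\bigl(\xi(x,t)\bigr), \quad \xi(x,t) = x_n - \eps c_0 \psi(x) + t\eps, \quad \beta = K\eps, \quad \alpha = \sqrt{1+\beta^2}.$$
Exactly as in Lemma \ref{main}, $v_t$ is a strict two-phase comparison subsolution of \eqref{fb} in $A$: in either open phase $\Delta v_t \geq \min(\alpha,\beta)\,\eps c_0 k(n) \gg \eps^4 \geq \|f\|_\infty$, and on $F(v_t) \cap A$ one has $(v_{t,\nu}^+)^2 - (v_{t,\nu}^-)^2 = (\alpha^2 - \beta^2)|\nabla \xi|^2 = |\nabla \xi|^2 > 1$, because $\psi_n < 0$ on the nearly flat free boundary (using the radial decrease of $w$ together with $\bar x_n > x_n$ on $F(v_t) \cap A$).

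I then run the sliding argument. Starting from $t_0 = -1$, the inequality $v_{t_0} \leq u$ holds on $\overline B_{3/4}(\bar x)$: in the positive phase of $v_{t_0}$ use $u \geq x_n$, and in its negative phase use that $|\xi(\cdot,t_0)| \geq \eps$ wherever $x_n \leq 0$, so $|v_{t_0}| \geq \beta\eps = K\eps^2 > \|u^-\|_\infty$ once $K > 1$. Let $\bar t$ be the largest $t$ with $v_t \leq u$ on $\overline B_{3/4}(\bar x)$. Strict subsolvability in $A$ rules out touching in either open phase of $v_{\bar t}$ in $A$; the viscosity solution condition rules out touching on $F(v_{\bar t}) \cap A$; and a phase-by-phase check (requiring $K > 2/c_0$ so that $|v_t| \geq K\eps^2 c_0/2 > \eps^2 \geq u^-$ on $\partial B_{3/4}(\bar x) \cap \{x_n \leq 0\}$) gives strict boundary separation for all $t \leq c_0/2$. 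Consequently, if $\bar t < c_0/2$, the touching point $\tilde x$ must lie in $\overline B_{1/20}(\bar x)$, where $\psi = 0$ and $v_{\bar t}(\tilde x) = \alpha(\tilde x_n + \bar t\eps)$; combined with the Harnack bound $u(\tilde x) \geq \tilde x_n + c_0\eps$ and $\alpha - 1 = O(\eps^2)$, this forces $\bar t \geq c_0/2$, a contradiction. Hence $\bar t \geq c_0/2$, so $u \geq v_{c_0/2}$ on $\overline B_{3/4}(\bar x)$, and restricting to $B_{1/2} \subset\subset B_{3/4}(\bar x)$, where $w \geq w_0 > 0$ uniformly, yields $u^+(x) \geq U_0(x_n + c\eps)$ with $c = c_0 w_0/2$ universal.

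The main obstacle, absent from Lemma \ref{main}, is precisely the simultaneous calibration of $\beta$. It must be large enough, $\beta \geq K_0\eps$ for an explicit universal $K_0 = \max(1, 2/c_0)$, that $|v_t^-|$ dominates $\|u^-\|_\infty \leq \eps^2$ both at $t = t_0$ and in the boundary-separation check; yet small enough that $\alpha - 1 = O(\beta^2) = O(\eps^2)$ remains negligible relative to the Harnack improvement $c_0\eps$ in the inner ball. The precise scales $\|u^-\|_\infty \leq \eps^2$ and $\|f\|_\infty \leq \eps^4$ imposed in \eqref{dg} are tuned exactly so that a universal $K$ satisfies both constraints with margin, which is the only place where the degenerate smallness assumption enters in an essential way.
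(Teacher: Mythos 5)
Your proposal is correct, and it follows the same overall scheme as the paper's proof of Lemma~\ref{main2} (anchor via interior Harnack in $B_{1/10}(\bar x)$, slide a two-phase comparison subsolution built on the barrier $\psi = 1 - w$, exclude interior and annulus touching by the strict subsolution property, and derive a contradiction from touching inside $\overline{B}_{1/20}(\bar x)$), but with one genuine variant in the sliding family that is worth pointing out. The paper uses $v_t(x) = \xi_t^+ - \eps^2 C_1\,\xi_t^-$, a two-plane profile with negative slope of order $\eps^2$, which makes the free boundary condition $\big(1-\eps^4 C_1^2\big)|\nabla\xi_t|^2>1$ a near-equality that has to be closed by choosing $\eps$ small; more importantly, the initialization $v_0 \le u$ cannot be read off from $\|u^-\|_\infty\le\eps^2$ alone, and the paper must first establish the auxiliary linear-decay bound $u^- \le C\,\eps^2\,x_n^-$ via a barrier argument (the displayed estimate \eqref{negu}). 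You instead take $v_t = U_\beta(\xi_t)$ with $\beta = K\eps$. Two things are gained: first, the free-boundary jump is exactly $(\alpha^2-\beta^2)|\nabla\xi_t|^2 = |\nabla\xi_t|^2$, so one needs only $\psi_n<0$ on $F(v_t)\cap A$ with no $O(\eps^4)$ slack to track; second, with $K$ chosen larger than a universal threshold, the negative part of $v_t$ has magnitude $\ge K\eps^2$ at $\eps$-depth below $\{\xi_t = 0\}$, so the crude $L^\infty$ bound $\|u^-\|_\infty\le\eps^2$ already dominates without invoking \eqref{negu}. The price is the extra calibration—$K$ large enough to beat $\eps^2$, small enough that $\alpha - 1 = O(K^2\eps^2)$ does not eat the Harnack gain $c_0\eps$—which you handle correctly. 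This is a modest but real simplification over the paper's route; both proofs are valid.
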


\begin{proof} We prove the first statement. The proof follows the same line as in the nondegenerate case.

Since $x_n >0$ in $B_{1/10}(\bar x)$  and $u^+ \geq U_0$ in $B_1$ we get
\begin{equation*} B_{1/10}(\bar x) \subset B_1^+(u). \end{equation*}

Thus $u-x_n \geq
0$ and solves $\Delta (u-x_n) =f$  in $B_{1/10}(\bar x)$ and we can apply Harnack inequality and the assumptions \eqref{dg} and  \eqref{u-p>ep2d}
to obtain that (for $\ep$ small enough)
\be\label{u-p>cep2} u
- x_n \geq  c_0\ep \quad \text{in $\overline B_{1/20}(\bar x)$}. \ee
Let $w$ be as in the proof of Lemma \ref{main} and  $\psi =1-w$. Set
\begin{equation*} v(x)= (x_n - \ep c_0 \psi(x))^+ - \ep^2 C_1 (x_n - \ep c_0 \psi(x))^-, \quad x \in \overline B_{3/4}(\bar x),\end{equation*} and for $t
\geq 0,$
$$v_t(x)= (x_n - \ep c_0 \psi +  t \ep)^+ -   \ep^2 C_1(x_n - \ep c_0 \psi(x) + t \ep)^-, \quad x \in \overline B_{3/4}(\bar x).
$$
Here $C_1$ is a universal constant to be made precise later.
We claim that
$$  v_0(x)=v(x) \leq u(x) \quad x \in \overline B_{3/4}(\bar x).$$

This is readily verified in the set where $u$ is non-negative using that $u \geq x_n^+.$  To prove our claim in the set where $u$ is negative we wish to use the following fact:
\be\label{negu}u^- \leq C x_n^- \ep^2, \quad \text{in $B_{\frac{19}{20}}$, $C$ universal}.\ee This estimate is easily obtained using that $\{u<0\} \subset \{x_n <0\},$ $\|u^-\|_\infty < \ep^2$ and the comparison principle with the function $w$ satisfying $$\Delta w = -\eps^4 \quad \text{in $B_1 \cap \{x_n <0\}$}, \quad w=u^- \quad \text{on $\p (B_1 \cap \{x_n <0\})$.}$$
Thus our claim immediately follows from the fact that for $x_n<0$ and $C_1 \geq C,$
$$\ep^2 C_1(x_n - \ep c_0 \psi(x)) \leq Cx_n \ep^2.$$

Let $\bar t$ be the largest $t \geq 0$ such that
$$v_{t}(x) \leq u(x) \quad \text{in $\overline B_{3/4}(\bar x)$}.$$
% Thus at some
%$\overline B_{3/4}(\bar x)$ we have $$v_{\bar t}(\tilde x) =
%u(\tilde x).$$

We want to show that $\bar t \geq c_0.$ Then we get the desired statement. Indeed, it is easy to check that if
$$u(x) \geq v_{\bar t}(x) = (x_n - \ep c_0 \psi + \bar t \ep)^+ -   \ep^2 C_1 (x_n - \ep c_0 \psi(x) + \bar t \ep)^-\quad \text{in $ B_{3/4}(\bar x)$}$$ then
$$u^+(x) \geq U_0(x_n + c \ep) \quad \text{in $ B_{1/2} \subset \subset B_{3/4}(\bar x)$}$$
with $c$ universal, $c < c_0 \inf_{B_1/2} w$

Suppose $\bar t < c_0$. Then at some
$\tilde x \in \overline B_{3/4}(\bar x)$ we have $$v_{\bar t}(\tilde x) =
u(\tilde x).$$ We show that such touching point can only occur on $\overline B_{1/20}(\bar x).$
Indeed, since $w\equiv 0$ on $\p B_{3/4}(\bar x)$ from the definition of $v_t$ we get that for $\bar t < c_0$
$$v_{\bar t}(x) =(x_n -\ep c_0 + \bar t \ep)^+ - \ep^2 C_1 (x_n -\ep c_0 + \bar t \ep)^-< u(x)\quad \textrm{on  $\p B_{3/4}(\bar x)$}.$$

In the set where $u \geq 0$ this can be seen using that $u \geq x_n^+$ while in the set where $u<0$ again we can use the estimate \eqref{negu}.

We now show that $\tilde x$ cannot belong to the annulus $A$.
Indeed, $$\Delta v_{\bar t}\geq \ep^3c_0 k(n) > \ep^4 \geq \|f\|_{\infty}, \quad \textrm{in
$A^+(v_{\bar t}) \cup A^-(v_{\bar t})$}$$ for $\ep$ small enough.

Also,
$$(v_{\bar t}^+)_\nu^2 - (v_{\bar t}^-)_\nu^2 = (1 -\ep^4 C_1^2)(1 + \ep^2 c_0^2 |\nabla \psi|^2  - 2\ep c_0 \psi_n) \quad \text{on $F(v_{\bar t}) \cap A$}.$$

Thus,
$$(v_{\bar t}^+)_\nu^2 - (v_{\bar t}^-)_\nu^2 > 1  \quad \text{on $F(v_{\bar t}) \cap A$}$$
as long as $\ep$ is small enough (as in the non-degenerate case one can check that $\inf_ {F(v_{\bar t}) \cap A} (-\psi_n)>c > 0$, $c$ universal.)
 Thus,
$v_{\bar t}$ is a strict subsolution to $\eqref{fb}$ in $A$ which lies below $u$, hence by definition $\tilde x$ cannot belong to $A.$

Therefore, $\tilde x \in \overline B_{1/20}(\bar x)$ and
$$u(\tilde x)=v_{\bar t}(\tilde x)  = (\tilde x_n + \bar t \ep)< \tilde x_n +  c_0 \ep$$ contradicting \eqref{u-p>cep2}.
\end{proof}

\section{Improvement of flatness}\label{section5}

 In this section we prove our key ``improvement of flatness" lemmas. As in Section \ref{Harnacksec}, we need to distinguish two cases.

\subsection{Non-degenerate case} In this case our solution $u$ is trapped between two translations of a  two-plane solution $U_\beta$ with $\beta \neq 0.$ We plan to show that when we restrict to smaller balls, $u$ is trapped between closer translations of another two-plane solution (in a different system of coordinates).

% We say that $u$ is $\ep$-flat in $B_1$
%if
%\begin{equation}\label{epflat}(x \cdot \nu -\ep)^+  \leq u(x) \leq
%(x \cdot \nu + \ep)^+ \quad \text{for $x \in B_1,$}
%\end{equation} for some direction $\nu$, $|\nu|=1$.

\begin{lem}[Improvement of flatness] \label{improv1}Let $u$  satisfy
\begin{equation}\label{flat_1}U_\beta(x_n -\ep) \leq u(x) \leq U_\beta(x_n +
\ep) \quad \text{in $B_1,$} \quad 0\in F(u),
\end{equation} with $0 <  \beta \leq L$ and
$$\|f\|_{L^\infty(B_1)} \leq \ep^2\beta.$$

If $0<r \leq r_0$ for $r_0$ universal, and $0<\ep \leq \ep_0$ for some $\ep_0$
depending on $r$, then
%\begin{equation}\label{improvedflat_1}(x \cdot \nu -r\frac{\ep^2}{2} -C r^2\ep)^+ \leq u(x) \leq
%(x \cdot \nu +r\frac{\ep^2}{2} +C r^2\ep)^+ \quad \text{for $x \in
%B_r$}
%\end{equation}

%In particular, if $C r \leq 1/4$ and $\ep_0 \leq 1/2$ then
\begin{equation}\label{improvedflat_2_new}U_{\beta'}(x \cdot \nu_1 -r\frac{\ep}{2})  \leq u(x) \leq
U_{\beta'}(x \cdot \nu_1 +r\frac{\ep }{2}) \quad \text{in $B_r,$}
\end{equation} with $|\nu_1|=1,$ $ |\nu_1 - e_n| \leq \tilde C\ep$ , and $|\beta -\beta'| \leq \tilde C\beta \ep$ for a
universal constant $\tilde C.$

\end{lem}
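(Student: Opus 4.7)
\bigskip

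\noindent\textbf{Proof proposal.} The plan is to argue by contradiction via a compactness/linearization argument, in the spirit of \cite{D}, using the Harnack inequality of Section~\ref{Harnacksec} to gain compactness and Theorem~\ref{linearreg} to close the loop. Suppose the conclusion fails for some $r \leq r_0$ (to be fixed later). Then there are sequences $\epsilon_k \to 0$, $\beta_k \in (0,L]$ bounded below away from $0$ (the complementary regime falls under the degenerate improvement of flatness), and viscosity solutions $u_k$ to \eqref{fb} with right-hand side $f_k$ satisfying $\|f_k\|_\infty \leq \epsilon_k^2 \beta_k$, such that
\[
U_{\beta_k}(x_n - \epsilon_k) \leq u_k(x) \leq U_{\beta_k}(x_n + \epsilon_k) \quad \text{in } B_1,\ 0\in F(u_k),
\]
but \eqref{improvedflat_2_new} fails for every choice of $\nu_1$ and $\beta'$ compatible with the stated bounds.

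First, I would normalize: set $\alpha_k=\sqrt{1+\beta_k^2}$ and define
\[
\tilde u_k(x) = \begin{cases} \dfrac{u_k(x) - \alpha_k x_n}{\alpha_k \epsilon_k}, & x \in B_1^+(u_k)\cup F(u_k),\\[4pt]
\dfrac{u_k(x) - \beta_k x_n}{\beta_k \epsilon_k}, & x \in B_1^-(u_k).
\end{cases}
\]
By Corollary~\ref{corollary} applied at every $x_0 \in B_{1/2}$, the family $\{\tilde u_k\}$ is uniformly bounded and uniformly Hölder on compact subsets of $B_{1/2}$ that stay at distance $\gtrsim \epsilon_k/\bar\epsilon$ from $\{x_n=0\}$. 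Since the flatness hypothesis forces $F(u_k) \subset \{|x_n|\leq \epsilon_k\}$, Ascoli--Arzelà yields, up to a subsequence, $\beta_k \to \beta^* \in (0,L]$, $\alpha_k \to \alpha^*=\sqrt{1+(\beta^*)^2}$, and $\tilde u_k \to \tilde u$ locally uniformly on $B_{1/2}\setminus\{x_n=0\}$, with $\tilde u$ extending continuously across the hyperplane.

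The core step is to show that $\tilde u$ is a viscosity solution of the Neumann-type problem \eqref{Neumann_p} with parameters $\tilde\alpha=\alpha^*$, $\tilde\beta=\beta^*$. Harmonicity of $\tilde u$ on $B_{1/2} \cap \{x_n \neq 0\}$ is standard, since $\Delta \tilde u_k = f_k/(\alpha_k \epsilon_k)$ or $f_k/(\beta_k \epsilon_k)$ tends to $0$. The delicate part is the transmission condition. Suppose a test function $\phi(x) = A + p x_n^+ - q x_n^- + B Q(x-y)$ with $(\alpha^*)^2 p - (\beta^*)^2 q > 0$ touches $\tilde u$ strictly by below at $x_0 \in \{x_n=0\}\cap B_{1/2}$. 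I would lift $\phi$ to a competitor for $u_k$ by setting
\[
\Phi_k(x) = \alpha_k\bigl(x_n + \epsilon_k \phi(x)\bigr)^+ - \beta_k\bigl(x_n + \epsilon_k \phi(x)\bigr)^-,
\]
and check, using $\alpha_k^2 - \beta_k^2 = 1$ together with the strict inequality on $(\alpha^*)^2 p - (\beta^*)^2 q$, that a small vertical perturbation $\Phi_k + c\epsilon_k$ is a strict classical subsolution of \eqref{fb} near $x_0$ for $k$ large. The uniform convergence $\tilde u_k \to \tilde u$ away from the interface combined with the flatness $F(u_k)\subset \{|x_n|\leq \epsilon_k\}$ forces $\Phi_k + c\epsilon_k$ to touch $u_k$ from below at some point on $F(\Phi_k)$, violating Definition~\ref{defnhsol}. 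The symmetric argument handles touching from above. This is the step I expect to be the main obstacle, because one must track the error introduced by the quadratic term $BQ$ and verify the sign conditions on $\Phi_k$'s free boundary condition uniformly in $k$.

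Having identified the limit, Theorem~\ref{linearreg} provides the expansion
\[
\bigl|\tilde u(x) - \tilde u(0) - \nabla_{x'}\tilde u(0)\cdot x' - \tilde p\, x_n^+ + \tilde q\, x_n^-\bigr| \leq \bar C r^2, \quad (\alpha^*)^2 \tilde p = (\beta^*)^2 \tilde q,
\]
for $r \leq 1/4$. Fix $r_0$ universal so that $\bar C r_0 \leq 1/8$, and let $r\leq r_0$. I would then define
\[
\nu_k := \frac{e_n + \epsilon_k\, \nabla_{x'}\tilde u(0)}{|e_n + \epsilon_k\, \nabla_{x'}\tilde u(0)|}, \qquad \alpha_k' := \alpha_k(1+\epsilon_k \tilde p), \qquad \beta_k' := \beta_k(1+\epsilon_k \tilde q),
\]
and verify using $(\alpha^*)^2 \tilde p = (\beta^*)^2 \tilde q$ (and $\alpha_k \to \alpha^*$, $\beta_k \to \beta^*$) that $(\alpha_k')^2 - (\beta_k')^2 = 1 + O(\epsilon_k^2)$, so that, after an $O(\epsilon_k^2)$ adjustment, $(\alpha_k', \beta_k')$ corresponds to a genuine two-plane configuration $U_{\beta_k'}$ in direction $\nu_k$ with $|\nu_k - e_n| \leq \tilde C \epsilon_k$ and $|\beta_k' - \beta_k| \leq \tilde C \beta_k \epsilon_k$. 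The uniform convergence of $\tilde u_k$ and the expansion of $\tilde u$ give, for $k$ large,
\[
U_{\beta_k'}(x\cdot \nu_k - r\tfrac{\epsilon_k}{2}) \leq u_k(x) \leq U_{\beta_k'}(x\cdot \nu_k + r\tfrac{\epsilon_k}{2}) \quad \text{in } B_r,
\]
contradicting the standing assumption. This completes the contradiction and hence the lemma.
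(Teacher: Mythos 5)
Your overall strategy (compactness, identification of the limiting transmission problem, contradiction via the expansion of Theorem \ref{linearreg}) is exactly the paper's, and Steps 1 and 3 of your outline are sound. The gap is in Step 2: the competitor you propose does not work as a comparison subsolution, and the paper's construction is structurally different precisely to avoid the problems it runs into.

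Concretely, you set $\Phi_k(x)=\alpha_k(x_n+\ep_k\phi(x))^+-\beta_k(x_n+\ep_k\phi(x))^-$ with $\phi=A+px_n^+-qx_n^-+BQ(x-y)$. Three things go wrong. First, $\Phi_k$ is not $C^2$ on $\overline{\Omega^\pm(\Phi_k)}$: the free boundary $F(\Phi_k)=\{x_n+\ep_k\phi=0\}$ is an $O(\ep_k)$-perturbation of $\{x_n=0\}$ but is generically \emph{not} equal to it, so the kink of $\phi$ along $\{x_n=0\}$ sits strictly inside one of the two phases $\{\Phi_k\gtrless 0\}$, and Definition \ref{defsub} is violated before you even get to the inequalities. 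Second, even ignoring that, on a small arc of $F(\Phi_k)$ one has $x_n$ of a single sign, so the gradients of $\Phi_k$ from both sides involve the \emph{same} one of $p$ or $q$; the transversality check collapses to $(\alpha_k^2-\beta_k^2)(1+\ep_k p)^2=(1+\ep_k p)^2>1$, which requires $p>0$ and does not follow from $(\alpha^*)^2p-(\beta^*)^2q>0$ (the sign $q$ simply never enters). Third, $Q$ is harmonic, so $\Delta\Phi_k=0$ in both phases, and neither this nor a vertical additive shift by $c\ep_k$ produces the strict inequality $\Delta\Phi_k>\|f_k\|_\infty$ required of a \emph{strict} comparison subsolution when $f_k\not\equiv 0$.

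The paper fixes (i)–(ii) by replacing $x_n+\ep_k(A+BQ)$ with the smooth radial function $\Gamma_k$ (a sphere of radius $\sim 1/(B\ep_k)$), which has $|\nabla\Gamma_k|\equiv 1$ on its zero level; the coefficients $p,q$ are then carried multiplicatively through $a_k=\alpha_k(1+\ep_k p)$, $b_k=\beta_k(1+\ep_k q)$, so that $F(\phi_k)=\{\Gamma_k=0\}$ is smooth, the two phases share the single smooth function $\Gamma_k$, and the free boundary check reads $a_k^2-b_k^2=1+2\ep_k(\alpha_k^2p-\beta_k^2q)+O(\ep_k^2)>1$. It fixes (iii) by adding the strictly subharmonic term $\alpha_k(d_k^+)^2\ep_k^{3/2}+\beta_k(d_k^-)^2\ep_k^{3/2}$, which gives $\Delta\phi_k\gtrsim\beta_k\ep_k^{3/2}\gg\ep_k^2\beta_k\geq\|f_k\|_\infty$. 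Also, a minor point: you do not need $\beta_k$ bounded away from $0$; the argument goes through with $\tilde\beta=\lim\beta_k\geq 0$, the linearized condition becoming a one-sided Neumann condition when $\tilde\beta=0$.
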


\begin{proof}We divide the proof of this Lemma into 3 steps.

 \vspace{2mm}

\textbf{Step 1 -- Compactness.} Fix $r \leq r_0$ with $r_0$ universal (the precise $r_0$ will be given in Step 3). Assume by contradiction that we
can find a sequence $\ep_k \rightarrow 0$ and a sequence $u_k$ of
solutions to \eqref{fb} in $B_1$ with  right hand side $f_k$ with $L^\infty$ norm bounded by $\ep_k^2\beta_k$, such that
\begin{equation}\label{flat_k}U_{\beta_k}(x_n -\ep_k) \leq u_k(x) \leq U_{\beta_k}(x_n +
\ep_k) \quad \text{for $x \in B_1$,  $0 \in F(u_k),$}
\end{equation} with $L \geq \beta_k > 0$,
but $u_k$ does not satisfy the conclusion \eqref{improvedflat_2_new} of the lemma.

Set ($\alpha_k^2=1+\beta_k^2$),$$ \tilde{u}_{k}(x)= \begin{cases}\dfrac{u_k(x) - \alpha_k x_n}{\alpha_k \ep_k}, \quad x \in
B_1^+(u_k) \cup F(u_k) \\ \ \\ \dfrac{u_k(x) - \beta_k x_n}{\beta_k\ep_k}, \quad x \in
B_1^-(u_k).\end{cases}
$$
Then \eqref{flat_k} gives,
\begin{equation}\label{flat_tilde**} -1 \leq \tilde{u}_{k}(x) \leq 1
\quad \text{for $x \in B_1$}.
\end{equation}

From Corollary \ref{corollary}, it follows that the function
$\tilde u_{k}$ satisfies \be\label{HC}|\tilde u_{k}(x) - \tilde
u_{k} (y)| \leq C |x-y|^\gamma,\ee for $C$ universal and
$$|x-y| \geq \ep_k/\bar\ep, \quad x,y \in B_{1/2}.$$ From \eqref{flat_k} it clearly follows that
$F(u_k)$ converges to $B_1 \cap \{x_n=0\}$ in the Hausdorff
distance. This fact and \eqref{HC} together with Ascoli-Arzela
give that as $\ep_k \rightarrow 0$ the graphs of the
$\tilde{u}_{k}$ converge (up to a
subsequence) in the Hausdorff distance to the graph of a H\"older
continuous function $\tilde{u}$ over $B_{1/2}$. Also, up to a subsequence
$$\beta_k \to \tilde \beta \geq 0$$ and hence $$\alpha_k \to \tilde \alpha = \sqrt{1+\tilde \beta^2}.$$

\vspace{2mm}

\textbf{Step 2 -- Limiting Solution.} We now show that $\tilde u$
solves the following linearized problem (transmission problem)
\begin{equation}\label{Neumann}
  \begin{cases}
    \Delta \tilde u=0 & \text{in $B_{1/2} \cap \{x_n \neq 0\}$}, \\
\ \\
\tilde \alpha^2 (\tilde u_n)^+ - \tilde\beta^2 (\tilde u_n)^-=0 & \text{on $B_{1/2} \cap \{x_n =0\}$}.
  \end{cases}\end{equation}

  Since $$|\Delta u_{k}| \leq \ep_k^2\beta_k \quad
\text{in $B_1^+(u_k) \cup B^-_1(u_k)$},$$ one easily deduces that $\tilde
u$ is harmonic in $B_{1/2} \cap \{x_n  \neq 0\}$.

Next, we prove that $\tilde u$ satisfies the boundary
condition in \eqref{Neumann} in the viscosity sense.

Let $\tilde \phi$ be a function of the form
$$\tilde \phi(x) = A+ px_n^+- qx_n^- + B Q(x-y)$$ with $$Q(x) = \frac 1 2 [(n-1)x_n^2 - |x'|^2], \quad y=(y',0), \quad A \in \R, B >0$$ and $$\tilde \alpha^2 p- \tilde \beta^2 q>0.$$ Then we must show that $\tilde \phi$ cannot touch $u$ strictly by below at a point $x_0= (x_0', 0) \in B_{1/2}$ (the analogous statement by above follows with a similar argument.)

Suppose that such a $\tilde \phi$ exists and let $x_0$ be the touching point.

Let \be\label{biggamma}\Gamma(x) = \frac{1}{n-2} [(|x'|^2 + |x_n-1|^2)^{\frac{2-n}{2}} - 1]\ee and let

\be\label{biggammak}\Gamma_{k}(x)=  \frac{1}{B\ep_k}\Gamma(B\ep_k(x-y)+ A B \ep_k^2 e_n).\ee

Now,  call
$$\phi_k(x)= a_k \Gamma^+_k(x) - b_k \Gamma^-_k(x) + \alpha_k (d_k^+(x))^2\ep_k^{3/2} +\beta_k(d_k^-(x))^2\ep_k^{3/2}$$
where
$$a_k=\alpha_k(1+\ep_k p), \quad b_k=\beta_k(1+\ep_k q)$$ and $d_k(x)$ is the signed distance from $x$ to $\p B_{\frac{1}{B\ep_k}}(y+e_n(\frac{1}{B\ep_k}-A\ep_k)).$

Finally, let

$$ \tilde{\phi}_{k}(x)= \begin{cases}\dfrac{\phi_k(x) - \alpha_k x_n}{\alpha_k \ep_k}, \quad x \in
B_1^+(\phi_k) \cup F(\phi_k) \\ \ \\ \dfrac{\phi_k(x) - \beta_k x_n}{\beta_k\ep_k}, \quad x \in
B_1^-(\phi_k).\end{cases}
$$

By Taylor's theorem
$$ \Gamma(x)= x_n + Q(x) + O(|x|^3) \quad x \in
B_1,$$
thus it is easy to verify that
$$ \Gamma_k(x)= A\ep_k + x_n + B\ep_kQ(x-y) + O(\ep_k^2) \quad x \in
B_1,$$
with the constant in $O(\ep_k^2)$ depending on $A,B,$ and $|y|$ (later this constant will depend also on $p,q.$).

It follows that in $
B_1^+(\phi_k) \cup F(\phi_k) $ ($Q^y(x)=Q(x-y)$)
 $$ \tilde{\phi}_{k}(x)= A+BQ^y + p x_n + A\ep_k p + Bp\ep_kQ^y + \ep_k^{1/2}d_k^2+ O(\ep_k)
$$
and analogously in  $
B_1^-(\phi_k)$

$$ \tilde{\phi}_{k}(x)= A+BQ^y + q x_n + A\ep_k p + Bq\ep_kQ^y + \ep_k^{1/2}d^2_k + O(\ep_k).
$$

Hence, $\tilde \phi_k$ converges uniformly to $\tilde \phi$ on $B_{1/2}$. Since $\tilde u_k$ converges uniformly to $\tilde u$ and $\tilde \phi$ touches $\tilde u$ strictly by below at $x_0$, we conclude that there exist a sequence of constants $c_k \to 0$ and of points $x_k \to x_0$ such that the function
$$\psi_k(x) = \phi_k(x+\ep_k c_k e_n)$$
touches $u_k$ by below at $x_k$. We thus get a contradiction if we prove that $\psi_k$ is a strict subsolution to our free boundary problem, that is \begin{equation*}\label{fbpsi*} \left \{
\begin{array}{ll}
   \Delta \psi_k > \ep_k^2\beta_k \geq \|f_k\|_{\infty},   & \hbox{in $B_1^+(\psi_k) \cup B_1^-(\psi_k),$}\\
\ \\
(\psi_k^+)_\nu^2 - (\psi_k^-)^2_\nu >1, & \hbox{on $F(\psi_k)$.}\\
\end{array}\right.
\end{equation*}

It is easily  checked that away from the free boundary
$$\Delta \psi_k \geq \beta_k \ep_k^{3/2} \Delta d_k^2(x+\ep_kc_k e_n)$$
and the first condition is satisfied for $k$ large enough.

Finally, since on the zero level set $|\nabla \Gamma_k|=1$ and $|\nabla d^2_k|=0$ the free boundary condition reduces to showing that
$$a_k^2-b_k^2 >1.$$
Using the definition of $a_k, b_k$ we need to check that
$$(\alpha_k^2p^2-\beta_k^2q^2)\ep_k + 2(\alpha_k^2 p -\beta_k^2 q)  >0. $$
This inequality holds for $k$ large in view of the fact that $$\tilde \alpha^2 p -\tilde \beta^2 q >0. $$

Thus $\tilde u$ is a solution to the linearized problem.

\vspace{2mm}

\textbf{Step 3 -- Contradiction.} According to estimate \eqref{lr}, since $\tilde u(0)=0$ we obtain that
$$|\tilde u - (x' \cdot \nu' + \tilde px_n^+ -\tilde qx_n^-)| \leq C r^2, \quad x\in B_r,$$
with
$$\tilde \alpha ^2 \tilde p -\tilde \beta^2 \tilde q=0, \quad |\nu'| = |\nabla_{x'} \tilde u (0)| \leq C.$$

Thus, since $\tilde u_k$ converges uniformly to $\tilde u$ (by slightly enlarging $C$) we get that
\be\label{ukest}|\tilde u_k - (x' \cdot \nu' + \tilde px_n^+ -\tilde qx_n^-)| \leq C r^2, \quad x\in B_r.\ee

Now set,
$$\beta'_k =  \beta_k(1+\ep_k \tilde q), \quad \nu_k = \frac{1}{\sqrt {1+ \ep_k^2 |\nu'|^2}}(e_n + \ep_k (\nu', 0)).$$

Then,
$$\alpha'_k = \sqrt{1+{\beta'_k}^2} =  \alpha_k(1+\ep_k \tilde p) + O(\ep_k^2), \quad \nu_k =e_n + \ep_k (\nu', 0) + \ep_k^2 \tau, \quad |\tau|\leq C,$$
where to obtain the first equality we used that $\tilde \alpha ^2 \tilde p -\tilde \beta^2 \tilde q=0$ and hence $$\frac {\beta_k^2}{\alpha_k^2} \tilde q= \tilde p + O(\ep_k).$$

With these choices we can now show that (for $k$ large and $r \leq r_0$)
$$\widetilde{U}_{\beta'_k}(x\cdot \nu_k -\ep_k\frac r 2) \leq \tilde u_k(x) \leq \widetilde{U}_{\beta'_k}(x\cdot \nu_k +\ep_k\frac r 2), \quad \text{in $B_r$}$$ where again we are using the notation:
$$ \widetilde{U}_{\beta'_k}(x)= \begin{cases}\dfrac{ \widetilde{U}_{\beta'_k}(x) - \alpha_k x_n}{\alpha_k \ep_k}, \quad x \in
B_1^+( \widetilde{U}_{\beta'_k}) \cup F( \widetilde{U}_{\beta'_k}) \\ \ \\ \dfrac{ \widetilde{U}_{\beta'_k}(x) - \beta_k x_n}{\beta_k\ep_k}, \quad x \in
B_1^-( \widetilde{U}_{\beta'_k}).\end{cases}
$$

This will clearly imply that
$$U_{\beta'_k}(x\cdot \nu_k -\ep_k\frac r 2) \leq u_k(x) \leq U_{\beta'_k}(x\cdot \nu_k +\ep_k\frac r 2), \quad \text{in $B_r$}$$
and hence leads to a contradiction.

In view of \eqref{ukest} we need to show that in $B_r$
$$\widetilde{U}_{\beta'_k}(x\cdot \nu_k -\ep_k\frac r 2) \leq (x' \cdot \nu' + \tilde px_n^+ -\tilde qx_n^-) - Cr^2$$

and
$$ \widetilde{U}_{\beta'_k}(x\cdot \nu_k +\ep_k\frac r 2) \geq (x' \cdot \nu' + \tilde px_n^+ -\tilde qx_n^-) + Cr^2.$$

Let us show the second inequality.
In the set where \be\label{<}x\cdot \nu_k +\ep_k\frac r 2 <0\ee by definition we have that
$$ \widetilde{U}_{\beta'_k}(x\cdot \nu_k +\ep_k\frac r 2) = \frac{1}{\beta_k \ep_k}( \beta'_k(x\cdot \nu_k +\ep_k\frac r 2 ) -  \beta_k x_n)$$ which from the formula for $\beta_k', \nu_k$ gives
$$ \widetilde{U}_{\beta'_k}(x\cdot \nu_k +\ep_k\frac r 2)  \geq   x' \cdot \nu' + \tilde qx_n +\frac r 2 - C_0 \ep_k.$$
Using \eqref{<} we then obtain
$$ \widetilde{U}_{\beta'_k}(x\cdot \nu_k +\ep_k\frac r 2)  \geq   x' \cdot \nu' + \tilde p x_n^+ - \tilde qx_n^- +\frac r 2 - C_1 \ep_k.$$ Thus to obtain the desired bound it  suffices to fix $r_0 \leq 1/(4C)$ and take $k$ large enough.

The other case can be argued similarly.
\end{proof}

\subsection{Degenerate case} In this case, the negative part of $u$ is negligible and the positive part is close to a one-plane solution (i.e. $\beta=0$). We prove below that in this setting only $u^+$ enjoys an improvement of flatness.

\begin{lem}[Improvement of flatness] \label{improv4_deg}Let $u$  satisfy
\begin{equation}\label{flat***}U_0(x_n -\ep) \leq u^+(x) \leq U_0(x_n +
\ep) \quad \text{in $B_1,$} \quad 0\in F(u),
\end{equation} with $$\|f\|_{L^\infty(B_1)} \leq \ep^4,$$ and $$ \|u^-\|_{L^\infty(B_1)} \leq \ep^2.$$

If $0<r \leq r_1$ for $r_1$ universal, and $0<\ep \leq \ep_1$ for some $\ep_1$
depending on $r$, then
%\begin{equation}\label{improvedflat_1}(x \cdot \nu -r\frac{\ep^2}{2} -C r^2\ep)^+ \leq u(x) \leq
%(x \cdot \nu +r\frac{\ep^2}{2} +C r^2\ep)^+ \quad \text{for $x \in
%B_r$}
%\end{equation}

%In particular, if $C r \leq 1/4$ and $\ep_0 \leq 1/2$ then
\begin{equation}\label{improvedflat_2}U_0(x \cdot \nu_1 -r\frac{\ep}{2})  \leq u^+(x) \leq
U_0(x \cdot \nu_1 +r\frac{\ep }{2}) \quad \text{in $ B_r,$}
\end{equation} with $|\nu_1|=1,$ $ |\nu_1 - e_n| \leq C\ep$  for a
universal constant $C.$

\end{lem}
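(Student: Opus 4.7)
The strategy closely mirrors Lemma \ref{improv1}, with modifications reflecting that the negative phase is of order $\varepsilon^2$, much smaller than the flatness scale $\varepsilon$ of the positive phase. My plan is to argue by contradiction via compactness, identify a limiting one-phase Neumann problem for the rescaled positive part, and close using its interior $C^2$ estimate. Assume there are sequences $\varepsilon_k \to 0$ and solutions $u_k$ satisfying \eqref{flat***} with $\|u_k^-\|_{L^\infty} \leq \varepsilon_k^2$ and $\|f_k\|_{L^\infty} \leq \varepsilon_k^4$ for which the conclusion fails. Normalize only the positive phase,
$$\tilde u_k(x) = \frac{u_k^+(x) - x_n}{\varepsilon_k} \quad \text{on } (B_1^+(u_k) \cup F(u_k)) \cap B_{1/2}.$$
The flatness hypothesis gives $|\tilde u_k| \leq 1$ and $F(u_k) \to B_{1/2} \cap \{x_n = 0\}$ in Hausdorff distance. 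Corollary \ref{corollary4} provides uniform Hölder control outside an $\varepsilon_k/\bar\varepsilon$-neighborhood of the free boundary, so a subsequence of the graphs of $\tilde u_k$ converges in Hausdorff distance to the graph of a Hölder continuous function $\tilde u$ on $\overline{B_{1/2}} \cap \{x_n \geq 0\}$, with $\tilde u(0) = 0$.

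I would then claim that $\tilde u$ is a viscosity solution of the one-phase Neumann problem
$$\Delta \tilde u = 0 \text{ in } B_{1/2} \cap \{x_n > 0\}, \qquad \tilde u_n = 0 \text{ on } B_{1/2} \cap \{x_n = 0\},$$
in the spirit of Section \ref{section3} (the degeneration $\tilde\beta = 0$). Interior harmonicity follows immediately from $\|f_k\|_{L^\infty} \leq \varepsilon_k^4$. To verify the Neumann condition, suppose a polynomial $\phi(x) = A + p x_n^+ + B Q(x - y)$ with $Q$ as in Section \ref{section3} and $p > 0$ touches $\tilde u$ strictly by below at $x_0 \in \{x_n = 0\}$. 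I build barriers $\phi_k$ whose positive part is modeled on the shifted fundamental solution $\Gamma_k$ from \eqref{biggammak} with coefficient $1 + \varepsilon_k p$, and whose negative part is taken to be an $O(\varepsilon_k^2)$ linear extension (playing the role of the $-\beta_k \Gamma_k^-$ piece from Lemma \ref{improv1}, but at the much smaller slope $O(\varepsilon_k^2)$). The pointwise bound $u_k^-(x) \leq C \varepsilon_k^2 x_n^-$, proved exactly as in \eqref{negu} by comparison with the auxiliary function $w$, ensures that the negative side of $\phi_k$ lies below $u_k$. On $F(\phi_k)$ one then computes
$$(\phi_k^+)_\nu^2 - (\phi_k^-)_\nu^2 = (1 + \varepsilon_k p)^2 - O(\varepsilon_k^4) = 1 + 2 \varepsilon_k p + O(\varepsilon_k^2) > 1$$
for $k$ large, since $p > 0$. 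Thus each $\phi_k$ is a strict subsolution touched by $u_k$ from below, contradicting the definition of viscosity solution.

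Finally, even reflection across $\{x_n = 0\}$ extends $\tilde u$ to a harmonic function on $B_{1/2}$. Standard interior $C^2$ estimates combined with $\tilde u(0) = 0$ yield
$$|\tilde u(x) - x' \cdot \nu'| \leq C r^2 \quad \text{in } B_r \cap \{x_n \geq 0\}, \qquad \nu' := \nabla_{x'} \tilde u(0), \quad |\nu'| \leq C.$$
By uniform convergence the same estimate (with $C$ slightly enlarged) holds for $\tilde u_k$. Setting
$$\nu_1 = \frac{e_n + \varepsilon_k (\nu', 0)}{\sqrt{1 + \varepsilon_k^2 |\nu'|^2}}, \qquad |\nu_1 - e_n| \leq C \varepsilon_k,$$
a direct Taylor expansion, handled separately for $x \cdot \nu_1 \pm r \varepsilon_k/2 \geq 0$ and $< 0$, shows that for $r \leq r_1 := 1/(4C)$ and $k$ large
$$U_0(x \cdot \nu_1 - r \varepsilon_k/2) \leq u_k^+(x) \leq U_0(x \cdot \nu_1 + r \varepsilon_k/2) \quad \text{in } B_r,$$
the desired contradiction to the failure of \eqref{improvedflat_2}.

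The principal obstacle will be the verification of the Neumann condition in the middle step. Unlike the non-degenerate case, where the barrier naturally carried a two-plane structure matched to $u$ on both sides, here the negative side must be engineered at the much smaller scale $\varepsilon_k^2$, and it is only the sharp pointwise control $u_k^- \leq C \varepsilon_k^2 x_n^-$ that makes the construction admissible while preserving strictness in the free boundary inequality.
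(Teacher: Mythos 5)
Your overall architecture—compactness, identify a limiting one-phase Neumann problem, then derive the improvement of flatness from its interior $C^{2}$ estimate via even reflection—is exactly the paper's, and Steps 1 and 3 are essentially correct. The gap is in Step 2, in the one-shot construction of the two-phase barrier.

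You propose to attach to the positive barrier $a_k\Gamma_k^+$ (with $a_k=1+\varepsilon_k p$) a negative part of slope $O(\varepsilon_k^2)$ from the outset, justified by $u_k^-\leq C\varepsilon_k^2\,x_n^-$. This is not sufficient. The barrier's own free boundary $\partial\mathcal B$ (where $\Gamma_k=0$, translated by $c_k\varepsilon_k e_n$) sits at distance $O(\varepsilon_k)$ from $\{x_n=0\}$, and a negative part prescribed as $O(\varepsilon_k^2)\cdot d(\cdot,\partial\mathcal B)^-$ vanishes on $\partial\mathcal B$. But in the $O(\varepsilon_k)$-wide slab between $\partial\mathcal B$ and $\{x_n=0\}$, the estimate $u_k^-\leq C\varepsilon_k^2 x_n^-$ gives no control of $u_k^-$ in terms of $d(\cdot,\partial\mathcal B)$: one can have $u_k^-\sim\varepsilon_k^3>0$ at points on or just inside $\partial\mathcal B$, where your barrier's negative part is zero. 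There $\phi_k>u_k$, and the touching-from-below fails. So the barrier you describe is, in general, not admissible.

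The paper's proof circumvents this by decoupling the touching from the extension. One first takes $\psi_k$ with a positive part only (the $a_k\Gamma_k^+$ block plus the $\varepsilon_k^2 (d_k^+)^2$ corrector), so that $\tilde\psi_k\to\tilde\phi$ and $\psi_k$ touches $u_k$ by below at some $x_k\in B_1^+(u_k)\cup F(u_k)$. The strict-subsolution (Laplacian) comparison rules out $x_k\in B_1^+(u_k)$, so $x_k\in F(u_k)\cap\partial\mathcal B$. \emph{Knowing} the touching structure, one deduces $u_k\geq\psi_k>0$ inside $\mathcal B$ near $x_k$, so the support of $u_k^-$ there lies outside $\mathcal B$. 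Only then does a local comparison yield the sharp estimate $u_k^-\leq c\varepsilon_k^2\,\bigl(d(\cdot,\partial\mathcal B)\bigr)^-$ in a small neighborhood of $x_k$—a bound that vanishes exactly where the barrier's negative part will—and one extends $\psi_k$ outside $\mathcal B$ by $c\varepsilon_k^2(3d+d^2)$ to get $\Psi_k$, a genuine two-phase strict subsolution touched by $u_k$ at $x_k\in F(\Psi_k)\cap F(u_k)$. Your free boundary computation $(1+\varepsilon_k p)^2 - O(\varepsilon_k^4)>1$ then applies to $\Psi_k$. In short: you correctly identified the delicate step in your last paragraph, but the pointwise bound $u_k^-\leq C\varepsilon_k^2 x_n^-$ is not the sharp control you need; it must be upgraded to a bound in terms of $d(\cdot,\partial\mathcal B)$, and that upgrade is only available after the touching point on $F(u_k)\cap\partial\mathcal B$ has been located.
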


\begin{proof}We argue similarly as in the non-degenerate case.

\vspace{2mm}

\textbf{Step 1 -- Compactness.} Fix $r \leq r_1$ with $r_1$ universal (made precise in Step 3). Assume by contradiction that we
can find a sequence $\ep_k \rightarrow 0$ and a sequence $u_k$ of
solutions to \eqref{fb} in $B_1$ with  right hand side $f_k$ with $L^\infty$ norm bounded by $\ep_k^4$, such that
\begin{equation}\label{flat_k4}U_{0}(x_n -\ep_k) \leq u^+_k(x) \leq U_{0}(x_n +
\ep_k) \quad \text{for $x \in B_1$,  $0 \in F(u_k),$}
\end{equation} with $$\|u_k^-\|_\infty \leq \ep^2_k$$
but $u_k$ does not satisfy the conclusion \eqref{improvedflat_2} of the lemma.

Set $$ \tilde{u}_{k}(x)= \dfrac{u_k(x) -  x_n}{ \ep_k}, \quad x \in
B_1^+(u_k) \cup F(u_k) $$
Then \eqref{flat_k4} gives,
\begin{equation}\label{flat_tilde*} -1 \leq \tilde{u}_{k}(x) \leq 1
\quad \text{for $x \in
B_1^+(u_k) \cup F(u_k) $}.
\end{equation}

As in the non-degenerate case, it follows from Corollary \ref{corollary4} that as $\ep_k \rightarrow 0$ the graphs of the
$\tilde{u}_{k}$ converge (up to a
subsequence) in the Hausdorff distance to the graph of a H\"older
continuous function $\tilde{u}$ over $B_{1/2} \cap \{x_n \geq 0\}$.

\vspace{2mm}

\textbf{Step 2 -- Limiting Solution.} We now show that $\tilde u$ solves the following Neumann problem
\begin{equation}\label{Neumann4}
  \begin{cases}
    \Delta \tilde u=0 & \text{in $B_{1/2} \cap \{x_n > 0\}$}, \\
\ \\
\tilde u_n =0 & \text{on $B_{1/2} \cap \{x_n =0\}$}.
  \end{cases}\end{equation}

 As before, the interior condition follows easily thus we focus on the boundary condition.

Let $\tilde \phi$ be a function of the form
$$\tilde \phi(x) = A+ px_n + B Q(x-y)$$ with $$Q(x) = \frac 1 2 [(n-1)x_n^2 - |x'|^2], \quad y=(y',0), \quad A \in \R, B>0$$ and $$p >0.$$ Then we must show that $\tilde \phi$ cannot touch $u$ strictly by below at a point $x_0= (x_0', 0) \in B_{1/2}$.
Suppose that such a $\tilde \phi$ exists and let $x_0$ be the touching point.

Let  $\Gamma_k$ be as in the proof of the non-degenerate case (see \eqref{biggammak}). Call

$$\phi_k(x)= a_k \Gamma^+_k(x)+ (d_k^+(x))^2\ep_k^2, \quad a_k=(1+\ep_k p) $$
where
$d_k(x)$ is the signed distance from $x$ to $\p B_{\frac{1}{B\ep_k}}(y+e_n(\frac{1}{B\ep_k}-A\ep_k)).$

Let
$$ \tilde{\phi}_{k}(x)=\dfrac{\phi_k(x) -  x_n}{ \ep_k}. $$

As in the previous case, it follows that in $
B_1^+(\phi_k) \cup F(\phi_k) $ ($Q^y(x)=Q(x-y)$)
$$ \tilde{\phi}_{k}(x)= A+BQ^y + p x_n + A\ep_k p + Bp\ep_kQ^y + \ep_kd_k^2+ O(\ep_k).
$$

Hence, $\tilde \phi_k$ converges uniformly to $\tilde \phi$ on $B_{1/2} \cap \{x_n \geq 0\}$. Since $\tilde u_k$ converges uniformly to $\tilde u$ and $\tilde \phi$ touches $\tilde u$ strictly by below at $x_0$, we conclude that there exist a sequence of constants $c_k \to 0$ and of points $x_k \to x_0$ such that the function
$$\psi_k(x) = \phi_k(x+\ep_k c_k e_n)$$
touches $u_k$ by below at $x_k \in B_1^+(u_k) \cup F(u_k)$. We claim that $x_k$ cannot belong to $B_1^+(u_k)$. Otherwise, in a small neighborhood $N$ of $x_k$  we would have that
$$ \Delta \psi_k > \ep_k^4 \geq \|f_k\|_{\infty} = \Delta u_k, \quad \text{$\psi_k < u_k$ in $N\setminus\{x_k\}, \psi_k(x_k) = u_k(x_k)$} $$
a contradiction.

Thus $x_k \in F(u_k) \cap \p B_{\frac{1}{B\ep_k}}(y+e_n(\frac{1}{B\ep_k}-A\ep_k-\eps_kc_k)).$ For simplicity we call $$\mathcal B: =B_{\frac{1}{B\ep_k}}(y+e_n(\frac{1}{B\ep_k}-A\ep_k-\eps_kc_k)).$$ Let $N_\rho$ be a small neighborhood of $x_k$ of size $\rho$. Since $$\|u^-_k\|_\infty \leq \eps^2_k, \quad u_k^+ \geq (x_n-\eps_k)^+$$ as in the proof of Harnack inequality using the fact that  $x_k \in F(u_k) \cap \p \mathcal B$ we can conclude by the comparison principle that

$$u^-_k  \leq c \eps^2_k (d(x, \p \mathcal B))^-, \quad \text{in $N_{\frac{3}{4}\rho}$}$$
where $d$ denotes again the signed distance from $x$ to $\p \mathcal B.$

Let \begin{equation}\label{Psi}\Psi_k(x) = \begin{cases}\psi_k & \text{in $\mathcal B$}\\  \ & \ \\ c\eps^2_k (3d(x,\p \mathcal B) + d^2(x,\p \mathcal B)) & \text{outside of $\mathcal B.$}\end{cases}\ee

Then  $\Psi_k$ touches $u_k$ strictly by below at $x_k \in F(u_k) \cap F(\Psi_k)$.

We reach a contradiction if we show that
$$(\Psi_k^+)_\nu^2 - (\Psi_k^-)^2_\nu >1, \quad \hbox{on $F(\Psi_k)$.}$$

This is equivalent to showing that
$$a_k^2 - c \eps_k^4 >1$$ or
$$(1+\ep_k p)^2 -c \ep_k^4> 1.$$

This holds for $k$ large enough, since $p>0.$ We finally reached a contradiction.

\

\textbf{Step 3 -- Contradiction.} In this step we can argue as in the final step of the proof of Lemma 4.1 in [D]. \end{proof}

\section{Proof of the main Theorems}\label{section6}

In this section we exhibit the proofs of our main results, Theorem \ref{flatmain1} and Theorem \ref{Lipmain}. As already pointed out, Theorem \ref{Lipmain} will follow via a blow-up analysis
from the flatness result. Thus, first we present the proof of Theorem \ref{flatmain1} based on the improvement of flatness lemmas of the previous section.

\subsection{Proof of Theorem \ref{flatmain1}.}
To complete the analysis of the degenerate case, we need to deal with the situation when $u$ is close to a one-plane solution and however the size of $u^-$ is not negligible. Precisely, we prove the following lemma.

\begin{lem}\label{finalcase}
Let $u$  solve \eqref{fb} in $B_2$ with $$\|f\|_{L^\infty(B_1)} \leq \ep^4$$ and satisfy
\begin{equation}\label{flat**}U_0(x_n -\ep) \leq u^+(x) \leq U_0(x_n +
\ep) \quad \text{in $B_1,$} \quad 0\in F(u),
\end{equation}$$\|u^-\|_{L^{\infty}(B_2)} \leq \bar C\eps^2,  \quad \|u^-\|_{L^\infty(B_1)} > \ep^2,$$ for a universal constant $\bar C.$ If $\ep \leq \ep_2$ universal,  then the rescaling
$$u_\ep(x) = \ep^{-1/2}u(\ep^{1/2}x)$$ satisfies in $B_1$
$$U_{\beta'}(x_n - C'\ep^{1/2}) \leq u_{\ep}(x) \leq U_{\beta'}(x_n + C'\ep^{1/2})$$ with $\beta' \sim \eps^2$ and $C'>0$ depending on $\bar C$.
\end{lem}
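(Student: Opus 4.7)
We extract a normalized slope $\beta\sim 1$ for the profile $v:=u^-/\eps^2$ at the origin and then verify that, after the rescaling $u_\eps(x)=\eps^{-1/2}u(\eps^{1/2}x)$, the function $u_\eps$ lies in the two-plane sandwich around $U_{\beta'}$ with $\beta':=\beta\eps^2\sim\eps^2$.

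By \eqref{flat**} and Lemma~\ref{deltand}, one has $F(u)\subset\{|x_n|\le\eps\}\cap B_1$, $\{u<0\}\subset\{x_n\le\eps\}\cap B_2$, and $u^+$ is non-degenerate $\eps$-away from $F(u)$. Rescaling \eqref{flat**} by $\eps^{1/2}$ immediately gives $U_0(x_n-\eps^{1/2})\le u_\eps^+(x)\le U_0(x_n+\eps^{1/2})$ in $B_1$; since $\alpha':=\sqrt{1+(\beta')^2}=1+O(\eps^4)$, this already transfers to the sandwich around $U_{\beta'}$ after enlarging $C'$ by a universal amount. All the work is thus on the negative part.

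Set $v:=u^-/\eps^2$: then $v\ge 0$, $v\equiv 0$ outside $\{u<0\}$, $\|v\|_{L^\infty(B_2)}\le\bar C$, and $|\Delta v|\le\eps^2$ in $\{v>0\}$. The technical heart is a quantitative expansion
\[
 \bigl|v(y)-\beta\,y_n^-\bigr|\ \le\ C(\bar C)\bigl(|y|^2+\eps\bigr)\qquad\text{in }B_{1/2},
\]
for some $\beta\in[c_0,C(\bar C)]$ with $c_0>0$ universal. The existence of $\beta$ and the $|y|^2$ term come from comparing $v$ with the harmonic function $w$ on $B_1\cap\{y_n<0\}$ matching $v$ on $\partial B_1\cap\{y_n<0\}$ and vanishing on $\{y_n=0\}$: odd reflection across $\{y_n=0\}$ makes $w$ smooth, so Taylor's theorem at the origin yields $|w(y)-\beta y_n^-|\le C\bar C|y|^2$ with $\beta:=-\partial_n w(0)$, while $|v-w|\le C(\bar C)\eps$ in $B_{1/2}\cap\{y_n<0\}$ follows from the maximum principle, using $|\Delta(v-w)|\le\eps^2$ together with a linear barrier that absorbs the $O(\eps)$ boundary-trace discrepancy on $\{y_n=0\}$ caused by the $\eps$-shift of $F(u)$ (here one exploits the Lipschitz bound on $u$). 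The lower bound $\beta\ge c_0>0$ is provided by the hypothesis $\|u^-\|_{L^\infty(B_1)}>\eps^2$: this yields $v(y_1)>1$ at some $y_1\in B_1$, and boundary H\"older regularity of $v$ in the NTA-type domain $\{u<0\}$ forces $y_1$ at depth $\ge c_0(\bar C)$ below $F(u)$; a Harnack chain inside $\{u<0\}$ then propagates $v\ge c>0$ to a fixed interior reference point, and Hopf's lemma applied on the $\eps$-flat portion of $\partial\{v>0\}$ near the origin delivers $\beta\ge c_0>0$.

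With $\beta':=\beta\eps^2\sim\eps^2$ so extracted, for $y=\eps^{1/2}x\in B_{\eps^{1/2}}$ the expansion becomes $|u_\eps^-(x)-\beta'x_n^-|=\eps^{3/2}|v(y)-\beta y_n^-|\le C(\bar C)\eps^{5/2}$. Since $\beta'C'\eps^{1/2}\sim C'\eps^{5/2}$ and $\alpha'=1+O(\eps^4)$, combining with the positive-part sandwich and performing a straightforward case analysis over $\{x_n\le -C'\eps^{1/2}\}$, $\{|x_n|\le C'\eps^{1/2}\}$ and $\{x_n\ge C'\eps^{1/2}\}$ shows that for $C'=C'(\bar C)$ sufficiently large,
\[
  U_{\beta'}(x_n-C'\eps^{1/2})\ \le\ u_\eps(x)\ \le\ U_{\beta'}(x_n+C'\eps^{1/2})\qquad\text{in }B_1,
\]
as desired. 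The main obstacle is the second-order expansion for $v$: one must carry out the reflection/barrier argument even though $F(u)$ is only $\eps$-flat (not smooth), and carefully track the dependence of the constants on $\bar C$; it is this dependence that forces $C'$ (and not merely an intermediate constant) to depend on $\bar C$ in the conclusion.
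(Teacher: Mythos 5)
The overall architecture of your proposal matches the paper's: define $v=u^-/\ep^2$, compare with a harmonic function $w$ in the negative half to extract a slope, use a Hopf-type argument for the positivity of the slope, and rescale. However there is a genuine gap in the step where you claim $|v-w|\le C(\bar C)\ep$ by "a linear barrier that absorbs the $O(\ep)$ boundary-trace discrepancy on $\{y_n=0\}$ caused by the $\ep$-shift of $F(u)$ (here one exploits the Lipschitz bound on $u$)." The Lipschitz bound does \emph{not} give an $O(\ep)$ discrepancy: for $(y',0)$ with $u(y',0)<0$, Lipschitz continuity of $u$ and the fact that $F(u)\subset\{|y_n|\le\ep\}$ yield only $u^-(y',0)\le L\ep$, hence $v(y',0)\le L/\ep$, which is an $O(1/\ep)$ bound — much worse than the $O(1)$ bound $v\le\bar C$ already available from $\|u^-\|_\infty\le\bar C\ep^2$, and nowhere near $O(\ep)$. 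Without the $O(\ep)$ boundary bound, the maximum-principle comparison gives only $|v-w|\le O(1)$, and the entire second-order expansion for $v$ collapses.

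The missing ingredient is a barrier/decay estimate for $v$ before introducing $w$: one must compare $v$ with a supersolution of $\Delta\,\cdot = -\ep^2$ in $B_2\cap\{y_n<\ep\}$ that agrees with $v$ on $\partial(B_2\cap\{y_n<\ep\})$, exploiting that $v$ is almost subharmonic, $v\le\bar C$, and $\operatorname{supp}(v)\subset\{y_n\le\ep\}$. This yields the linear decay $v\le k|y_n-\ep|$ in $B_1$ with $k$ universal, and in particular $v\le k\ep$ on $\{y_n=0\}$. This is precisely the estimate the paper establishes first, and it is a substantive PDE argument — not a consequence of the Lipschitz bound on $u$. Once it is in place, your maximum-principle argument with the linear barrier does deliver $|v-w|\le C(\bar C)\ep$, and the remainder of your argument (Taylor expansion of $w$ by odd reflection, the Hopf-type lower bound for the slope, and the final case analysis after rescaling) aligns with the paper's proof, up to the inessential difference that the paper reflects across $\{y_n=-\ep\}$ rather than $\{y_n=0\}$. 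A secondary remark: your lower bound for $\beta$ appeals to boundary H\"older regularity of $v$ in an "NTA-type" domain $\{u<0\}$, which requires justification (the domain is only known to be $\ep$-flat); the same decay estimate $v\le k|y_n-\ep|$ sidesteps this, since $v(\bar y)>1$ then directly forces $\bar y$ to lie at a fixed universal distance below $\{y_n=\ep\}$.
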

\begin{proof} For notational simplicity we set $$v = \frac{u^-}{\ep^2}.$$
From our assumptions we can deduce that $$F(v) \subset \{-\ep \leq x_n \leq \ep\}$$ \be\label{neg}v \geq 0 \quad \text{in $B_2 \cap \{x_n \leq -\ep\}$}, \quad v \equiv 0 \quad \text{in  $B_2 \cap \{x_n > \ep\}.$}\ee Also,
$$|\Delta v| \leq \ep^2, \quad \text{in $B_2 \cap \{x_n < -\ep\}$},$$ and \be\label{C}0 \leq v \leq \bar C \quad \text{on $\p B_2$,}\ee \be\label{barx}v(\bar x) >1 \quad \text{at some point $\bar x$ in $B_1.$}\ee

Thus, using comparison with the function $w$ such that $$\Delta w = -\ep^2 \quad \text{in $D:=B_2 \cap \{x_n < \eps\}$ and $w= v$ on $\p D$}$$ we obtain that for some $k>0$ universal
\be\label{ke}v \leq k|x_n -\ep|, \quad \text{in $B_1$}.\ee  This fact forces the point $\bar x$ in \eqref{barx} to belong to $B_1\cap \{x_n < -\ep\}$ at a fixed distance $\delta$ from $x_n = -\ep.$

Now, let $w$ be the harmonic function in $B_1 \cap \{x_n < -\ep\}$ such that $$w=0 \quad \text{on $B_1 \cap \{x_n=-\ep\}$}, \quad w=v \quad \text{on $\p B_1 \cap  \{x_n \leq  -\ep\}$}.$$ By the maximum principle we conclude that
$$ w+ \ep^2(|x|^2-3) \leq v \quad \text{on $B_1 \cap \{x_n < -\ep\}$.}$$
Also, for $\ep$ small, in view of \eqref{ke} we obtain that
$$  w- k\ep(|x|^2-3) \geq v \quad \text{on $\p (B_1 \cap \{x_n < -\ep\})$}$$ and hence also in the interior. Thus we conclude that
\be\label{w-v} |w-v| \leq c\eps \quad \text{in $B_1 \cap \{x_n < -\ep\}$}. \ee In particular this is true at $\bar x$ which forces \be\label{wbarx} w(\bar x) \geq 1/2.\ee By expanding $w$ around $(0, -\ep)$ we then obtain, say in $B_{1/2} \cap  \{x_n \leq -\ep\}$
$$|w - a |x_n + \ep|| \leq C |x|^2 +C\ep.$$
This combined with \eqref{w-v} gives that
$$ |v - a|x_n+\ep|| \leq C\ep, \quad \text{in $B_{\ep^{1/2}} \cap  \{x_n \leq -\ep\}$.}$$
Moreover, in view of \eqref{wbarx} and the fact that $\bar x$ occurs at a fixed distance from $\{x_n = -\ep\}$ we deduce from Hopf lemma that
$$a \geq c>0$$ with $c$ universal. In conclusion (see \eqref{ke})
$$\label{u-}  |u^- - b\ep^2|x_n+\ep|| \leq C\ep^3, \quad \text{in $B_{\ep^{1/2}} \cap  \{x_n \leq -\ep\}$,} \quad u^- \leq b\ep^2 |x_n-\ep|, \quad \text{in $B_1$}$$ with $b$ comparable to a universal constant.

Combining the two inequalities above and the assumption \eqref{flat**} we conclude that in $B_{\ep^{1/2}}$
$$ (x_n - \ep)^+ - b\ep^2(x_n-C\ep)^- \leq u(x) \leq (x_n+\ep)^+ - b \ep^2 (x_n+C\ep)^-$$
with $C>0$ universal and $b$ larger than a universal constant. Rescaling, we obtain that in $B_1$
$$ (x_n - \ep^{1/2})^+ - \beta'(x_n-C\ep^{1/2})^- \leq u_\ep(x) \leq (x_n+\ep^{1/2})^+ - \beta'(x_n+C\ep^{1/2})^-$$
with $\beta' \sim  \ep^2$. We finally need to check that this implies the desired conclusion in $B_1$
$$ \alpha'(x_n - C\ep^{1/2})^+ - \beta'(x_n-C\ep^{1/2})^- \leq u_\ep(x) \leq \alpha'(x_n+C\ep^{1/2})^+ - \beta' (x_n+C\ep^{1/2})^-$$ with $\alpha'^2=1+\beta'^2 \sim 1+ \ep^4.$ This clearly holds in $B_1$ for $\ep$ small, say  by possibly enlarging $C$ so that $C \geq 2.$
\end{proof}

We are finally ready to exhibit the proof of Theorem \ref{main_new}, which as already observed, immediately gives the result of Theorem \ref{flatmain1}.

\

\textit{Proof of Theorem $\ref{main_new}.$} Let us fix $\bar r >0$ to be a universal constant such that
$$\bar r \leq r_0, r_1, 1/16,$$
with $r_0,r_1$ the universal constants in the improvement of flatness Lemmas \ref{improv1}-\ref{improv4_deg}. Also, let us fix a universal constant $\tilde \ep>0$ such that
$$\tilde \ep \leq \ep_0(\bar r), \frac{\ep_1(\bar r)}{2}, \frac{1}{2\tilde C}, \frac{\ep_2}{2}$$ with $\ep_0,\ep_1,\ep_2, \tilde C,$ the constants in the Lemmas \ref{improv1}-\ref{improv4_deg}-\ref{finalcase} .
Now, let $$\bar \ep = \tilde \ep^3.$$ We distinguish two cases. For notational simplicity we assume that $u$ satisfies our assumptions in the ball $B_2$ and $0 \in F(u)$.

\vspace{2mm}

\textit{Case 1.} $\beta \geq \tilde \ep.$

\vspace{2mm}

In this case, in view of Lemma \ref{elementary} and our choice of $\tilde \ep$, we obtain that $u$ satisfies the assumptions of Lemma \ref{improv1}, \begin{equation}\label{flat_1*}U_\beta(x_n -\tilde \ep) \leq u(x) \leq U_\beta(x_n +
\tilde \ep) \quad \text{in $B_1,$} \quad 0\in F(u),
\end{equation} with $0 <  \beta \leq L$ and
$$\|f\|_{L^\infty(B_1)} \leq \tilde \ep^3 \leq \tilde \ep^2\beta.$$ Thus we can conclude that, ($\beta_1=\beta'$)
$$U_{\beta_1}(x \cdot \nu_1 -\bar r\frac{\tilde \ep}{2})  \leq u(x) \leq
U_{\beta_1}(x \cdot \nu_1 +\bar r\frac{\tilde \ep }{2}) \quad \text{in $B_{\bar r},$}
$$ with $|\nu_1|=1,$ $ |\nu_1 - e_n| \leq \tilde C\tilde \ep$ , and $|\beta -\beta_1| \leq \tilde C\beta \tilde \ep$. In particular, by our choice of $\tilde \ep$ we have
$$\beta_1 \geq \tilde \ep/2.$$We can therefore rescale and iterate the argument above. Precisely, set ($k=0,1,2....$)
$$\rho_k = \bar r^k, \quad \ep_k = 2^{-k}\tilde \ep$$
and $$u_k(x) = \frac{1}{\rho_k} u(\rho_k x), \quad f_k(x) = \rho_k f(\rho_k x).$$
Also, let $\beta_k$ be the constants generates at each $k$-iteration, hence satisfying ($\beta_0=\beta$)
$$|\beta_k-\beta_{k+1}| \leq \tilde C \beta_k \ep_k.$$
Then  we obtain by induction that each $u_k$ satisfies
\begin{equation}U_{\beta_k}(x \cdot \nu_k -\ep_k)  \leq u_k(x) \leq
U_{\beta_k}(x \cdot \nu_k +\ep_k ) \quad \text{in $B_1,$}
\end{equation} with $|\nu_k|=1,$ $ |\nu_k - \nu_{k+1}| \leq \tilde C\tilde \ep_k$ ($\nu_0=e_n$.)

\vspace{2mm}

\textit{Case 2.}  $\beta < \tilde \ep.$

\vspace{2mm}

In view of Lemma \ref{elementary} we conclude that
$$U_0(x_n -\tilde \ep) \leq u^+(x) \leq U_0(x_n +
\tilde \ep) \quad \text{in $B_1.$}
$$ Moreover, from the assumption \eqref{initialass} and the fact that $\beta < \tilde \ep$ we also obtain that
$$\|u^-\|_{L^\infty (B_1)} < 2\tilde \ep.$$
Call $(\ep')^2 = 2\tilde \ep.$ Then $u$ satisfies the assumptions of the (degenerate) improvement of flatness Lemma \ref{improv4_deg}.
$$U_0(x_n -\ep') \leq u^+(x) \leq U_0(x_n +
\ep') \quad \text{in $B_1,$}
$$ with $$\|f\|_{L^{\infty}(B_1)} \leq (\ep')^4, \quad \|u^-\|_{L^\infty(B_1)} < \ep'^2.$$
We conclude that
$$U_0(x \cdot \nu_1 -\bar r\frac{\ep'}{2})  \leq u^+(x) \leq
U_0(x \cdot \nu_1 +\bar r\frac{\ep' }{2}) \quad \text{in $ B_{\bar r},$}
$$ with $|\nu_1|=1,$ $ |\nu_1 - e_n| \leq C\ep'$  for a
universal constant $C.$
We now rescale as in the previous case and set ($k=0,1,2....$)
$$\rho_k = \bar r^k, \quad \ep_k = 2^{-k}\ep'$$
and $$u_k(x) = \frac{1}{\rho_k} u(\rho_k x), \quad f_k(x) = \rho_k f(\rho_k x).$$
We can iterate our argument and obtain that (with $|\nu_k|=1,$ $ |\nu_k - \nu_{k+1}| \leq C\ep_k$)
\begin{equation}\label{deltoid} U_0(x \cdot \nu_k -\ep_k)  \leq u_k^+(x) \leq
U_0(x \cdot \nu_k +\ep_k) \quad \text{in $ B_{1},$}
\end{equation}
as long as we can verify that
$$\|u_k^-\|_{L^\infty(B_1)} < \ep^2_k.$$
Let $\bar k$ be the first integer $\bar k> 1$ for which this fails, that is
$$\|u^-_{\bar k}\|_{L^\infty(B_1)} \geq \ep^2_{\bar k},$$ and $$\|u_{\bar k-1}^-\|_{L^\infty(B_1)} < \ep^2_{\bar k-1}.$$
Also, $$U_0(x \cdot \nu_{\bar k-1} -\ep_{\bar k-1})  \leq u_{\bar k-1}^+(x) \leq
U_0(x \cdot \nu_{\bar k-1} +\ep_{\bar k_-1}) \quad \text{in $ B_{1}.$}
$$
As argued several times (see for example \eqref{negu}), we can then conclude from the comparison principle that
$$u^-_{\bar k-1} \leq M|x_n-\ep_{\bar k-1}|\ep^2_{\bar k-1}\quad \text{in $ B_{19/20},$}
$$ for a universal constant $M>0. $ Thus, by rescaling we get that
$$\|u^-_{\bar k}\|_{L^\infty(B_2)} < \bar C \ep_{\bar k}^2$$ with $\bar C$ universal (depending on the fixed $\bar r$). We obtain that $u_{\bar k}$ satisfies all the assumptions of Lemma \ref{finalcase} and hence  the rescaling
$$v(x) = \ep_{\bar k}^{-1/2}u_{\bar k}(\ep_{\bar k}^{1/2}x)$$ satisfies in $B_1$
$$U_{\beta'}(x_n - C'\ep_{\bar k}^{1/2}) \leq v(x) \leq U_{\beta'}(x_n + C'\ep_{\bar k}^{1/2})$$ with $\beta' \sim \eps_{\bar k}^2.$
Call $\eta=\bar C \ep_{\bar k}^{1/2}.$
Then $v$ satisfies our free boundary problem in $B_1$ with right hand side $$g(x)= \ep_{\bar k}^{1/2} f_{\bar k}(\ep_{\bar k}^{1/2}x)$$ and the flatness assumption
$$U_{\beta'}(x_n -\eta) \leq v(x) \leq U_{\beta'}(x_n + \eta)$$
Since $\beta' \sim \ep^2_{\bar k}$ with a universal constant, $$\|g\|_{L^\infty(B_1)} \leq \ep_{\bar k}^{1/2} \ep_{\bar k}^{4} \leq \eta^2 \beta'$$
as long as $\tilde \ep \leq C''$ depending on $\bar C$. In conclusion choosing $\tilde \eps \leq \frac{\eps_0(\bar r)^4}{2 \bar C^4}$, $v$ falls under the assumptions of the (non-degenerate) improvement of flatness Lemma \ref{improv1} and we can use an iteration argument as in Case 1.
\qed

\

  \subsection{Proof of Theorem \ref{Lipmain}.} Although not strictly necessary, we use the following Liouville type result for global viscosity solutions to a two-phase homogeneous free boundary problem, that could be of independent interest.

\begin{lem}\label{Liouville} Let $U$ be a global viscosity solution to \begin{equation}\label{fbglobal} \left \{
\begin{array}{ll}
   \Delta U = 0,   & \hbox{in $\{U>0\}\cup \{U \leq 0\}^0,$}\\
\ \\
(U_\nu^+)^2 - (U_\nu^-)^2= 1, & \hbox{on $F(U):= \partial \{U>0\}.$} \\
\end{array}\right.
\end{equation}
Assume that $F(U) = \{x_n =g(x'), x' \in \R^{n-1}\}$ with $Lip(g) \leq M$.  Then $g$ is linear and $U(x)=U_\beta(x)$ for some $\beta \geq 0.$
\end{lem}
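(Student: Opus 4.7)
The plan is to combine the $C^{1,\alpha}$-regularity theorem of \cite{C1} with a scale-invariance (blow-down) argument, concluding with the classical Liouville theorem for nonnegative harmonic functions in a half-space. The problem \eqref{fbglobal} is invariant under the rescaling $U \mapsto U_R(x) := U(Rx)/R$: both the Laplace equation and the gradient-jump condition $(U_\nu^+)^2 - (U_\nu^-)^2 = 1$ are preserved, and the graph property of $F(U)$ with Lipschitz constant $M$ transfers to $F(U_R)$, which is the graph of $g_R(x') = g(Rx')/R$ with $Lip(g_R) \leq M$.

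First, I would apply the Caffarelli theorem from \cite{C1} to the unit-scale solution $U$: under the Lipschitz assumption $Lip(g) \leq M$, the free boundary is $C^{1,\alpha}$, with a quantitative estimate of the form $[\nabla g]_{C^\alpha(\{|x'|<1/2\})} \leq C(n,M)$. Since this estimate depends only on $(n, M)$, it applies equally to every blow-down $U_R$. Unwinding the definitions via $\nabla g_R(x') = \nabla g(Rx')$ and $[\nabla g_R]_{C^\alpha(\{|x'|<1/2\})} = R^\alpha [\nabla g]_{C^\alpha(\{|x'|<R/2\})}$, I conclude
$$[\nabla g]_{C^\alpha(\{|x'|<R/2\})} \leq C(n,M)\,R^{-\alpha} \to 0 \quad \text{as } R \to \infty,$$
so $\nabla g$ is constant on $\R^{n-1}$ and $g$ is affine.

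After a rotation and translation I may assume $F(U) = \{x_n = 0\}$. Then $U^+$ is a nonnegative harmonic function on $\{x_n > 0\}$ vanishing continuously on $\{x_n = 0\}$, and similarly $U^-$ on $\{x_n < 0\}$. By the classical half-space Liouville theorem (a nonnegative harmonic function in a half-space vanishing on the boundary hyperplane is a nonnegative multiple of the distance to that hyperplane), $U^+(x) = \alpha x_n^+$ with $\alpha \geq 0$ and $U^-(x) = \beta x_n^-$ with $\beta \geq 0$. The free boundary relation $\alpha^2 - \beta^2 = 1$ then fixes $\alpha = \sqrt{1+\beta^2}$, so $U = U_\beta$ up to the rotation, which is the desired conclusion.

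The delicate point is ensuring that the $C^{1,\alpha}$ estimate from \cite{C1} really is scale-invariant with the constant depending only on $(n, M)$; this is however built into Caffarelli's argument, which iterates on dyadic scales with purely geometric constants. A self-contained alternative (in the spirit of the present paper) would be to replace \cite{C1} by Theorem \ref{flatmain1} itself: at sufficiently large blow-down scales the Lipschitz graph $g_R$ becomes arbitrarily flat in the sense of \eqref{flat}, hence the free boundary is $C^{1,\gamma}$ with a scale-invariant estimate, and the same rescaling trick forces $\nabla g$ to vanish in H\"older norm at every scale.
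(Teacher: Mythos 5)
Your proof is correct and follows essentially the same route as the paper: apply the scale-invariant $C^{1,\alpha}$ estimate of \cite{C1} to the blow-downs $U_R$, observe that the H\"older seminorm $[\nabla g]_{C^\alpha}$ scales like $R^{-\alpha}$ and hence vanishes, and conclude $g$ is affine. You additionally spell out the half-space Liouville step that the paper leaves implicit, which is a welcome completion rather than a divergence.
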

\begin{proof}Assume for simplicity, $0 \in F(U)$. Also, balls (of radius $\rho$ and centered at $0$) in $\R^{n-1}$ are denoted by $\mathcal B_\rho.$

By the regularity theory in \cite{C1} , since $U$ is a solution in $B_2$, the free boundary $F(U)$ is $C^{1,\gamma}$ in $B_1$ with a bound depending only on $n$ and on $M$. Thus,
$$|g(x') - g(0) -\nabla g(0) \cdot x'| \leq C |x'|^{1+\alpha}, \quad x' \in \mathcal B_1 $$ with $C$ depending only on $n, M.$
Moreover, since $U$ is a global solution, the rescaling
$$g_R(x') = \frac{1}{R} g(Rx'), \quad x' \in \mathcal B_2$$
which preserves the same Lipschitz constant as $g$, satisfies the same inequality as above i.e.
$$|g_R(x') - g_R(0) -\nabla g_R(0) \cdot x'| \leq C |x'|^{1+\alpha}, \quad x' \in \mathcal B_1.$$
This reads,
$$|g(Rx') - g(0) -\nabla g(0) \cdot Rx'| \leq C R |x'|^{1+\alpha}, \quad x' \in \mathcal B_1.$$
Thus,
$$|g(y') - g(0) -\nabla g(0) \cdot y'| \leq C \frac{1}{R^\alpha}|y'|^{1+\alpha}, \quad y' \in \mathcal B_R.$$

Passing to the limit as $R \to \infty$ we obtain the desired claim.
\end{proof}

\

\textit{Proof of Theorem \ref{Lipmain}.} Let $\bar \eps$ be the universal constant in Theorem \ref{main_new}. Consider the blow-up sequence $$u_k(x) = \frac{u(\delta_k)}{\delta_k}$$ with $\delta_k \to 0$ as $k \to \infty$. Each $u_k$ solves \eqref{fb} with right hand side $$f_k(x) = \delta_k f(\delta_k x)$$ and $$\|f_k (x)\| \leq  \delta_k \|f\|_{L^\infty} \leq \bar \eps$$ for $k$ large enough. Standard arguments (see for example \cite{ACF}) using the uniform Lischitz continuity of the $u_k$'s and the nondegeneracy of their positive part $u_k^+$ (see  Lemma \ref{deltand}) imply that (up to a subsequence)
$$u_k \to \tilde u \quad \text{uniformly on compacts}$$
and
$$\{u_k^+=0\} \to \{\tilde u = 0\} \quad \text{in the Hausdorff distance}.$$

The blow-up limit $\tilde u$ solves the global homogeneous two-phase free boundary problem
\begin{equation}\label{fbglobal*} \left \{
\begin{array}{ll}
    \Delta \tilde u = 0,   & \hbox{in $\{\tilde u>0\} \cup \{\tilde u \leq 0\}^0$} \\
\ \\
 (\tilde u_\nu^+)^2 - (\tilde u_\nu^-)^2= 1, & \hbox{on $F(\tilde u):= \p \{\tilde u>0\}.$}  \\
\end{array}\right.
\end{equation}

Since $F(u)$ is a Lipschitz graph in  a neighborhood of 0, it follows from Lemma \ref{Liouville} that $\tilde u$ is a two-plane solutions, $\tilde u= U_\beta$ for some $\beta \geq 0$. Thus, for $k$ large enough
$$\|u_k - U_\beta\|_{L^\infty} \leq \bar \ep$$
and
\begin{equation*} \{x_n \leq - \bar \ep\} \subset B_1 \cap \{u^+_k(x)=0\} \subset \{x_n \leq \bar \ep \}.\end{equation*}  Therefore, we can apply our flatness Theorem \ref{main_new} and conclude that $F(u_k)$ and hence $F(u)$ is smooth.

\subsection{Flatness and $\ep$-monotonicity}\label{subsection7}

The flatness results which are present in the literature (see, for instance \cite{C2}), are often stated in terms of ``$\ep$- monotonicity" along a large cone of directions $\Gamma(\theta_0, e)$ of axis $e$ and opening $\theta_0$. Precisely, a function $u$ is said to be $\ep$-monotone ($\ep>0$ small) along the direction $\tau$ in the cone $\Gamma(\theta_0, e)$ if for every $\ep' \geq \ep$,
$$u(x+\ep'\tau) \leq u(x).$$

A variant of Theorem \ref{flatmain1} states the following.

\begin{thm}\label{DDFFSS} Let $u$ be a solution to \eqref{fb} in $B_1$, $0 \in F(u).$ Suppose that $u^+$ is non-degenerate. Then there exist $\theta_0 < \pi/2$ and $\ep_0>0$ such that if $u^+$ is $\ep$-monotone along every direction in $\Gamma(\theta_0, e_n)$ for some $\ep \leq \ep_0$, then $u^+$ is fully monotone in $B_{1/2}$ along any direction in $\Gamma(\theta_1,e_n)$ for some $\theta_1$ depending on $\theta_0, \ep_0.$ In particular $F(u)$ is the graph of a Lipschitz function.
\end{thm}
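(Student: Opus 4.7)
The plan is to reduce Theorem \ref{DDFFSS} to Theorem \ref{flatmain1} in three moves: first extract genuine flatness of $F(u)$ from the $\eps$-monotonicity together with the non-degeneracy of $u^+$, then invoke the flatness theorem to obtain $C^{1,\gamma}$ regularity of the free boundary, and finally upgrade to full monotonicity along a slightly narrower cone. Since Theorem \ref{flatmain1} is stable under the alternative flatness formulations alluded to in Subsection \ref{subsection7}, the whole argument is essentially a ``translation'' of $\eps$-monotonicity into genuine flatness.

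The first and main step is the geometric reduction. The $\eps$-monotonicity of $u^+$ along every $\tau\in\Gamma(\theta_0,e_n)$ says that $\{u^+=0\}$ is, up to an $\eps$-shift, invariant under all translations $\eps'\tau$ with $\eps'\geq\eps$ and $\tau\in\Gamma(\theta_0,e_n)$. Geometrically this forces the free boundary to sit below a Lipschitz graph of slope at most $\cot\theta_0$, modulo an error of size $O(\eps)$. The matching lower squeeze is where non-degeneracy enters: were $F(u)$ to dip a definite amount below this Lipschitz envelope, one could fit a ball inside $\{u^+>0\}$ whose center lies deep inside what the envelope would call $\{u^+=0\}$, and then the linear growth $u^+\geq c\,\mathrm{dist}(\cdot,F(u))$ combined with translating $u^+$ back along $\Gamma(\theta_0,e_n)$ would contradict $u^+\equiv 0$ at the originating point. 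This traps $F(u)$ between two parallel Lipschitz graphs with slope controlled by $\cot\theta_0$ and vertical gap $O(\eps)$.

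After a rotation straightening the common direction and a rescaling of unit order, the trapping condition becomes exactly the flatness hypothesis \eqref{flat} with $\delta$ as small as we please, provided $\theta_0$ is close enough to $\pi/2$ and $\eps_0$ is small enough. Theorem \ref{flatmain1} then gives $F(u)\in C^{1,\gamma}$ in $B_{1/2}$, and in particular $u^+\in C^{1,\gamma}$ up to $F(u)$ from the positive side, with $\nabla u^+$ on $F(u)$ close to a positive multiple of $e_n$ uniformly in $B_{1/2}$. Combining this boundary gradient estimate with interior $C^{1,\gamma}$ estimates for $\Delta u^+=f$ in $\Omega^+(u)\cap B_{1/2}$ shows $\partial_\tau u^+>0$ in $B_{1/2}$ for every $\tau\in\Gamma(\theta_1,e_n)$, where $\theta_1<\pi/2$ is determined by $\theta_0$ and the $C^{1,\gamma}$ modulus; the Lipschitz graph conclusion for $F(u)$ is then immediate from monotonicity along a non-degenerate cone of directions.

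The main obstacle is the first step, i.e., converting the soft $\eps$-monotonicity into the rigid flatness of \eqref{flat} uniformly as $\eps\to 0$. The delicate point is that $\eps$-monotonicity alone controls $F(u)$ only down to scale $\eps$, so one must use non-degeneracy (and the bound on $\|f\|_\infty$ that allows the comparison arguments of Section \ref{section2}) to rule out $F(u)$-oscillations below that scale; this is precisely the role played by Caffarelli's translation lemmas in the homogeneous theory, which here must be run against the inhomogeneous equation but without appealing to any monotonicity formula.
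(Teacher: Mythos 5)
Your proposal follows the paper's route exactly: the paper reduces Theorem \ref{DDFFSS} to Theorem \ref{flatmain1} via the (unproved) "elementary lemma" at the end of Subsection \ref{subsection7}, which asserts precisely that $\eps$-monotonicity of $u^+$ along a wide cone, together with non-degeneracy, yields $\delta_0$-flatness of $F(u)$ in some $B_{r_0}$, after which the flatness theorem applies. You fill in more of the geometric reduction (trapping $F(u)$ between two near-flat Lipschitz envelopes, with non-degeneracy supplying the lower enclosure) than the paper does, and your treatment of the final upgrade to full monotonicity is similarly brief; the overall strategy and the role of each hypothesis match the paper's.
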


Geometrically, the $\ep$-monotonicity of $u^+$ can be interpreted as $\ep$-closeness of $F(u)$ to the graph of a Lipschitz function. Our flatness assumption
requires $\ep$-closeness of $F(u)$ to a hyperplane. While this looks like a somewhat stronger assumption, it is indeed a natural one since it is satisfied for example by rescaling of solutions around a ``regular" point of the free boundary. Moreover, if $\|f\|_\infty$ is small enough, depending on $\ep$, it is not hard to check that  $\ep$-flatness of $F(u)$ implies $c\ep$-monotonicity of $u^+$ along the directions of a flat cone, for a $c$ depending on its opening.

The proof of Theorem \ref{DDFFSS} follows immediately from the following elementary lemma:

\begin{lem} Let $u$ be a solution to \eqref{fb} in $B_1$, $0 \in F(u).$ Suppose that $u^+$ is Lipschitz and non-degenerate. Assume that  $u^+$ is $\ep$-monotone along every direction in $\Gamma(\theta_0, e_n)$ for some $\ep \leq \ep_0$, then there exist a radius $r_0>0$ and  $\delta_0>0$ depending on $\ep_0, \theta_0$ such that $u^+$ is $\delta_0$-flat in $B_{ r_0}$, that is \begin{equation*} \{x_n \leq - \delta_0\} \subset B_{r_0} \cap \{u^+(x)=0\} \subset \{x_n \leq \delta_0 \}.\end{equation*}  \end{lem}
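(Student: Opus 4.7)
The statement splits into a lower inclusion $\{x_n\le-\delta_0\}\cap B_{r_0}\subset\{u^+=0\}$ and an upper inclusion $B_{r_0}\cap\{u^+=0\}\subset\{x_n\le\delta_0\}$. The plan is to deduce both from purely geometric consequences of the $\ep$-monotonicity of $u^+$ together with the defining condition $0\in F(u)$. The parameters $r_0,\delta_0$ will come out of order $\ep_0$, with proportionality constants depending only on $\theta_0$.

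\smallskip

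\emph{Lower inclusion.} The assumed $\ep$-monotonicity reads $u(y-\ep'\tau)\le u(y)$ for every $\tau\in\Gamma(\theta_0,e_n)$ and every $\ep'\ge\ep$, so the zero set $\{u^+=0\}=\{u\le 0\}$ is backward $\ep$-cone-monotone: $y\in\{u^+=0\}$ forces $y-\ep'\tau\in\{u^+=0\}$. Applying this at $y=0$, where $u(0)=0$ since $0\in F(u)$, gives
$$\mathcal C^-:=\{-\ep'\tau:\tau\in\Gamma(\theta_0,e_n),\ \ep'\ge\ep\}=\{y:-y_n\ge|y|\cos\theta_0,\ |y|\ge\ep\}\subset\{u^+=0\}.$$
Picking $\delta_0\ge\ep_0$ and $r_0\le\delta_0/\cos\theta_0$, every $y\in B_{r_0}$ with $y_n\le-\delta_0$ satisfies $|y|\ge|y_n|\ge\delta_0\ge\ep$ and $-y_n\ge\delta_0\ge r_0\cos\theta_0\ge|y|\cos\theta_0$, hence $y\in\mathcal C^-$ and $u^+(y)=0$.

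\smallskip

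\emph{Upper inclusion.} I would argue by contradiction: assume some $x\in B_{r_0}$ has $u^+(x)=0$ and $x_n>\delta_0$. Applying the backward cone-monotonicity at $y=x$ yields
$$\mathcal C^-_x:=\{y:x_n-y_n\ge|x-y|\cos\theta_0,\ |x-y|\ge\ep\}\subset\{u^+=0\}.$$
If $r_0,\delta_0,\rho$ are tuned so that a full ball $B_\rho$ around $0$ lies in $\mathcal C^-_x$ uniformly in the admissible $x$, then $B_\rho\subset\{u^+=0\}$, contradicting $0\in F(u)=\partial\{u>0\}$ (every neighborhood of $0$ must meet $\{u>0\}$; the quantitative non-degeneracy hypothesis is stronger and also suffices). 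The worst case $y\in B_\rho$, namely $y_n=\rho$ with $|x'-y'|=|x'|+\rho$, reduces $B_\rho\subset\mathcal C^-_x$ to the two sufficient inequalities
$$r_0+\rho\le(\delta_0-\rho)\tan\theta_0,\qquad \delta_0-\rho\ge\ep_0.$$
These are compatible: e.g.\ $\delta_0=2\ep_0$, $\rho=\delta_0\tan\theta_0/(4(1+\tan\theta_0))$, $r_0=\delta_0\tan\theta_0/4$ work for any $\theta_0\in(0,\pi/2)$.

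\smallskip

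\emph{Main obstacle.} The delicate step is the quantitative geometric matching in the upper inclusion: one must calibrate $r_0,\delta_0,\rho$ as functions of $\theta_0,\ep_0$ so that the backward $\ep$-cone opened at any forbidden point $x$ engulfs a full ball around the origin, no matter where the $B_\rho$-neighborhood of $0$ is compared to $x$. The compatibility of the two displayed inequalities for every $\theta_0\in(0,\pi/2)$ is exactly what makes this step go through; combined with the lower inclusion it yields the desired flatness.
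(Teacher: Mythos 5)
Your argument is correct, and it fills a gap that the paper leaves unproved: the authors state this lemma immediately before Subsection~7.4 ends, label it elementary, and omit the proof entirely. So there is no ``paper's proof'' to compare against; what can be checked is whether your reasoning is sound, and it is. You correctly read the $\ep$-monotonicity of $u^+$ in the form $u^+(y-\ep'\tau)\le u^+(y)$ (the sign in the paper's displayed definition is the opposite, $u(x+\ep'\tau)\le u(x)$, which is incompatible with the flatness normalization $\{u^+=0\}\approx\{x_n\le 0\}$ used throughout; your reading is the internally consistent one), so that $\{u^+=0\}$ is stable under $y\mapsto y-\ep'\tau$ for $\ep'\ge\ep$. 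The lower inclusion then follows by swinging the punctured backward cone from the origin, and the upper inclusion by noting that a forbidden point $x\in B_{r_0}\cap\{u^+=0\}$ with $x_n>\delta_0$ would sweep a full ball $B_\rho$ around the origin into $\{u^+=0\}$, contradicting $0\in\partial\{u>0\}$. The two sufficient inequalities you isolate, $r_0+\rho\le(\delta_0-\rho)\tan\theta_0$ and $\delta_0-\rho\ge\ep_0$, are the right calibration, and your explicit choices satisfy them.

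Two observations, neither of which is a flaw. First, your proof does not actually use the Lipschitz continuity or non-degeneracy of $u^+$ (you note this yourself); it is purely a soft topological argument exploiting $0\in\partial\{u>0\}$. Those hypotheses are presumably carried along because they are needed when the lemma is chained into Theorem~7.1 and the flatness theorem, not for this step. Second, your choices give $\delta_0/r_0=4\cot\theta_0$, so the flatness is genuinely small only when $\theta_0$ is close to $\pi/2$; for narrower cones the conclusion becomes vacuous ($B_{r_0}\subset\{x_n\le\delta_0\}$ automatically). This is unavoidable: $\ep$-cone-monotonicity with aperture $\theta_0$ cannot force the free boundary closer to a hyperplane than $\cot\theta_0$ per unit horizontal distance, and Theorem~7.1 accounts for this by choosing $\theta_0$ first, large enough. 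It would be worth stating $\delta_0/r_0=O(\cot\theta_0)$ explicitly, since that dependence is what makes the lemma usable downstream.
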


\section{More general operators and free boundary conditions}\label{section8}

\subsection{The set up}

In this section we analyze the free boundary problem (\ref{fbcv}), that is
\begin{equation}\label{fbcvfinal}
\left \{
\begin{array}{ll}
\mathcal{L}u = f, & \hbox{in $\Omega^+(u) \cup
\Omega^-(u),$} \\
\  &  \\
u_\nu^+=G(u^-_\nu,x), & \hbox{on $F(u):= \partial \Omega^+(u) \cap \Omega,$}
\\
&
\end{array}
\right.
\end{equation}
where $f$ is continuous {in $\Omega^+(u) \cup \Omega^-(u)$} with $%
\|f\|_{L^\infty(\Omega)}\leq L,$  and 
\begin{equation*}
\mathcal{L}=\sum_{i,j=1}^na_{ij}(x)D_{ij} + {\bf b} \cdot \nabla, \quad a_{ij}\in C^{0, \bar{\gamma}}(\Omega), {\bf b}\in C(\Omega)\cap
L^\infty(\Omega),
\end{equation*}
is uniformly elliptic with constants $0<\lambda\leq\Lambda$.

We recall that our assumptions on $G$ are:
\smallskip

\begin{itemize}
\item[(H1)]
$G(\eta,\cdot)\in C^{0,\bar\gamma}(\Omega)$ uniformly in $\eta;\quad G(\cdot,x)\in C^{1,\bar\gamma}([0,L])$ for every $x\in \Omega.$

\item[(H2)]
$G'(\cdot, x)>0$ with $G(0,x)\geq\gamma_0>0$ uniformly in $x$.
\item[(H3)]
There exists $N>0$ such that $\eta^{-N}G(\eta,x)$ is strictly decreasing in $\eta$, uniformly in $x$.
\end{itemize}
\smallskip

We assume that $0\in F(u)$ and that $a_{ij}(0)=\delta_{ij}$. Also, for notational convenience we set
$$
G_0(\beta)=G(\beta,0).
$$

\smallskip

Let $U_\beta$ be the two-plane solution to \eqref{fbcvfinal} when $\mathcal{L}%
=\Delta,f \equiv 0 $ and $G=G_0$, i.e.
\begin{equation*}
U_\beta(x) = \alpha x_n^+ - \beta x_n^-, \quad \beta \geq 0, \quad \alpha
=G_0(\beta).
\end{equation*}

The following definitions parallel those in Section \ref{section2}.

\begin{defn}
\label{defsubcv} We say that $v \in C(\Omega)$ is a $C^2$ strict
(comparison) subsolution (resp. supersolution) to (\ref{fbcvfinal}) in $\Omega$,
if  $v\in C^2(\overline{\Omega^+(v)}) \cap C^2(\overline{\Omega^-(v)})$ and the following conditions are satisfied:

\begin{enumerate}
\item $\mathcal{L}v> f $ (resp. $< f $) in $%
\Omega^+(v) \cup \Omega^-(v)$;

\item If $x_0 \in F(v)$, then
\begin{equation*}
v_\nu^+(x_0)>G(v_\nu^-(x_0),x_0) \quad (\text{resp. $v_\nu^+(x_0)<G(v_%
\nu^-(x_0),x_0), \:\:v_\nu^+(x_0) \neq 0$.)}
\end{equation*}
\end{enumerate}
\end{defn}

\smallskip

Observe that the free boundary of a strict comparison sub/supersolution is $%
C^2$.

\begin{defn}
\label{defnhsolcv} Let $u$ be a continuous function in $\Omega$. We say that
$u$ is a viscosity solution to (\ref{fbcv}) in $\Omega$, if the following
conditions are satisfied:

\begin{enumerate}
\item $\mathcal{L}u = f$ in $\Omega^+(u) \cup
\Omega^-(u)$ in the viscosity sense;

\item Any (strict) comparison subsolution $v$ (resp. supersolution) cannot touch
$u$ by below (resp. by above) at a point $x_0 \in F(v)$ (resp. $F(u)$.)
\end{enumerate}
\end{defn}

From here after, most of the statements and proofs parallel those in Sections 2 to 6. Thus,
we only point out the main differences as much as possible.

\subsection{Compactness and localization}

As for problem (\ref{fb}), we prove some basic lemmas to reduce the statement
of the flatness theorem to a proper normalized situation. We start with the compactness Lemma
\ref{compact_delta} which generalizes to operators of the form
\begin{equation*}
\mathcal L_*^k=\sum a^k_{ij}D_{ij}
\end{equation*}
with $a_{ij}^k \in C^{0,\bar \gamma}$ uniformly elliptic with constants $\lambda, \Lambda$ and free boundary conditions given by a $G_k$
satisfying the hypotheses (H1)-(H3).

\smallskip

\begin{lem}
\label{compvar} Let $u_k$ be a sequence of (Lipschitz) viscosity solutions to
\begin{equation}  \label{fbcv2}
\left \{
\begin{array}{ll}
|\mathcal L_*^k  u_k|\leq M, & \hbox{in
$\Omega^+(u_k) \cup \Omega^-(u_k),$} \\
\  &  \\
(u_k^+)_\nu=G_k((u^-_k)_\nu,x), & \hbox{on $F(u_k).$}%
\end{array}%
\right.
\end{equation}
\smallskip

Assume that:

\begin{enumerate}
\item $a^k_{ij}\to a_{ij}, u_k \to u^*$
uniformly on compact sets,

\item $G_k(\eta,\cdot)\to G(\eta,\cdot)$ on compact sets, uniformly on $0\leq \eta\leq L=Lip(u_k),$

\item $\{u_k^+=0\} \to \{(u^*)^+=0\}$ in the Hausdorff distance.
\end{enumerate}

Then
\begin{equation*}
|\sum a_{ij}D_{ij}u^*|\leq M, \quad \text{in $%
\Omega^+(u^*) \cup \Omega^-(u^*)$}
\end{equation*}
and $u^*$ satisfies the free boundary condition
\begin{equation*}
(u^*)^+_\nu=G((u^*)^-_\nu,x)\quad \hbox{on $F(u^*)$,}
\end{equation*}
both in the viscosity sense.
\end{lem}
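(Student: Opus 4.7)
The plan is to verify the two conclusions separately: the interior bound $|\sum a_{ij}D_{ij}u^*|\leq M$ in the viscosity sense, and the free boundary condition $(u^*)^+_\nu=G((u^*)^-_\nu,x)$ on $F(u^*)$. Both rest on the classical stability of viscosity solutions under uniform limits, combined with the Hausdorff convergence of the zero sets to track which phase a given test point lies in.

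The interior estimate is routine. Given $\varphi\in C^2$ touching $u^*$ strictly from above at $x_0\in\Omega^+(u^*)$ (the case $x_0\in\Omega^-(u^*)$ is symmetric), the Hausdorff convergence places a fixed ball around $x_0$ inside $\Omega^+(u_k)$ for all large $k$. The standard perturbation argument produces points $x_k\to x_0$ at which a vertical shift of $\varphi$ touches $u_k$ from above, so $\mathcal L^k_*\varphi(x_k)\leq M$. Since $a^k_{ij}\to a_{ij}$ uniformly on compacts, passing to the limit gives $\sum a_{ij}(x_0)D_{ij}\varphi(x_0)\leq M$; testing from below yields the matching lower bound.

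The free boundary condition is where the real work is. I argue by contradiction: suppose some strict $C^2$ comparison subsolution $v$ to the limiting problem touches $u^*$ from below at $x_0\in F(v)$ (the supersolution case is analogous). By hypothesis, in a neighborhood $N$ of $x_0$, $\sum a_{ij}D_{ij}v>M+2\eta$ in $\Omega^\pm(v)$ and $v^+_\nu>G(v^-_\nu,\cdot)+2\eta$ on $F(v)$, for some $\eta>0$. Let $\nu$ be the inward normal to $\Omega^+(v)$ at $x_0$ and consider the spatial translation family $v_s(x):=v(x-s\nu)$. Using the $C^{0,\bar\gamma}$ uniform convergence $a^k_{ij}\to a_{ij}$, the uniform convergence $G_k\to G$ on $[0,L]\times\overline N$, and the uniform $C^{0,\bar\gamma}$-modulus of $G(\eta,\cdot)$, one verifies that for $|s|$ small and $k$ large, $v_s$ remains a strict comparison subsolution of the $k$-th free boundary problem, with a strictness margin $\eta/2$. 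Shrinking $N$ so that $u^*-v\geq\rho>0$ on $\partial N$ and using $u_k\to u^*$ uniformly, one has $u_k-v_s\geq\rho/2$ on $\partial N$ for all relevant $s$ and $k$. For small $s>0$, $v_s(x_0)=v(x_0-s\nu)<0$ pushes $v_s$ into the negative phase, so $v_s\leq u_k$ on $\overline N$; for small $s<0$, $v_s(x_0)>0$ and the inequality fails somewhere in $N$. Hence there is a smallest $\bar s_k\geq 0$ with $v_{\bar s_k}\leq u_k$ on $\overline N$; the uniform convergence forces $\bar s_k\to 0$, and a first touching occurs at some $x_k\to x_0$.

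The decisive step, and the main obstacle, is to show that this touching lies on $F(v_{\bar s_k})$ rather than in the interior of a phase of $v_{\bar s_k}$. Touching on $\partial N$ is ruled out by the uniform margin. Touching at an interior point of $\Omega^\pm(v_{\bar s_k})\cap N$ is ruled out by the classical maximum principle: the interior $C^{2,\alpha}$ regularity of $u_k$ in each phase, obtained from Schauder theory for viscosity solutions of $|\mathcal L^k_*u|\leq M$ with $C^{0,\bar\gamma}$ coefficients, makes $u_k-v_{\bar s_k}$ a classical $C^2$ function attaining an interior minimum at $x_k$, forcing $\mathcal L^k_*u_k(x_k)\geq \mathcal L^k_*v_{\bar s_k}(x_k)>M$, which contradicts $|\mathcal L^k_*u_k(x_k)|\leq M$. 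Therefore $x_k\in F(v_{\bar s_k})$, and $v_{\bar s_k}$ is a strict comparison subsolution of the $k$-th problem touching $u_k$ from below at a free boundary point, contradicting the viscosity solution property of $u_k$. The principal technical care concerns the choice of spatial rather than vertical translation (the latter would destroy the free boundary condition, since on a level set of $v$ both one-sided normal derivatives collapse to $|\nabla v|$) and the simultaneous bookkeeping of the two-parameter errors in $a^k_{ij}\to a_{ij}$ and $G_k\to G$ so that the strict inequalities survive.
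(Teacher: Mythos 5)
The proposal follows essentially the same compactness/stability strategy as the paper, but with a duality twist: the paper takes a strict supersolution touching $u^*$ from above and translates it to touch $u_k$, while you take a strict subsolution touching from below, build a ``first touching'' at $x_k\in F(v_{\bar s_k})$, and invoke the clause of Definition~\ref{defnhsolcv} forbidding a strict subsolution from touching $u_k$ by below on $F(v)$. Both orientations are legitimate by the definition, so the overall route is sound and leads to the correct conclusion. Your remark about spatial versus vertical translation is also in harmony with what the paper actually does (the paper's shift $v(x+c_ke_n)$ is a spatial translate, applied after a careful perturbation $\bar v$ of $v$ to make the free boundaries touch strictly, not an additive vertical shift).

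There is, however, one concrete mistake in the way you rule out interior touching. You claim that $u_k$ enjoys interior $C^{2,\alpha}$ regularity ``from Schauder theory for viscosity solutions of $|\mathcal L^k_* u_k|\leq M$ with $C^{0,\bar\gamma}$ coefficients'', and then run a classical maximum-principle argument on $u_k-v_{\bar s_k}$. This is false: $|\mathcal L^k_* u_k|\leq M$ is a two-sided \emph{differential inequality}, not an equation, and viscosity solutions of such inequalities are in general only $C^{1,\alpha}$ (or $W^{2,p}$) -- they need not be classical, so $u_k-v_{\bar s_k}$ need not be $C^2$. Fortunately the claim is unnecessary: if $v_{\bar s_k}$ (which is $C^2$ with $\mathcal L^k_* v_{\bar s_k}>M$) touches $u_k$ from below at an interior point of $\Omega^\pm(u_k)$, then $v_{\bar s_k}$ is an admissible $C^2$ test function for the viscosity supersolution inequality $\mathcal L^k_* u_k\leq M$, and the contradiction $\mathcal L^k_* v_{\bar s_k}(x_k)>M$ is immediate -- no regularity of $u_k$ is required. (Note also that at a touching point $v_{\bar s_k}(x_k)=u_k(x_k)$, so $x_k\in\Omega^+(v_{\bar s_k})$ forces $x_k\in\Omega^+(u_k)$ and likewise for the negative phase, so the interior viscosity inequality really does apply there.) With that repair, your proof is correct and parallel to the paper's argument, with the sub/super roles exchanged.
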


\begin{proof} Call $$\mathcal {L_*}:=\sum a_{ij}D_{ij}.$$ The proof that \begin{equation*}
|\mathcal L_* u^*|\leq M, \quad \text{in $%
\Omega^+(u^*) \cup \Omega^-(u^*)$}
\end{equation*} is standard. We show for example that 
\begin{equation*}
\mathcal{L_*}u^*+M\geq 0,\quad \text{in $%
\Omega^+(u^*)$.}
\end{equation*}

Let $v \in C^2(\Omega^+(u^*))$ touch $u^*$ by above at $\bar{x} \in\Omega^+(u^*)$ and assume by contradiction that 
\begin{equation*}
\mathcal{L}_*v(\bar x) +M<0.\end{equation*}
Without loss of generality we can assume that $v$ touches $u^*$ strictly by above (otherwise we replace $v$ with $v+\frac{\eta}{2n\Lambda}|x-\bar{x}|^2$ and $\eta$ small.)
Then, since $u_k\to u^*$ uniformly in
compact sets and $\{u_k^+=0\} \to \{(u^*)^+=0\}$ in the Hausdorff distance, there exists $x_k\to \bar{x}$ and constants $c_k\to 0$ such
that $v+c_k$ touches by above $u_k$ at $x_k\in\Omega^+(u_k)$, for $k$ large.
Then, since $|\mathcal L^k_*u_k (x_k)| \leq M$ we must have
\begin{equation*}
\mathcal{L}^k_* v
(x_k)+M\geq 0.
\end{equation*}
This implies, for $k\to\infty$,
\begin{equation*}
\mathcal{L}_* v(\bar{x}%
)+M\geq 0
\end{equation*}
which is a contradiction.

We now prove that the free boundary condition holds. Let $v$ be a strict comparison super solution such that, \begin{equation}\label{L*}
\mathcal{L}_*v +M<0, \quad \text{in $\Omega^+(v) \cup \Omega^-(v),$}\end{equation}
and 
$$v^+_\nu < G(v^-_\nu, x), \quad v^+_\nu(x) \neq 0 \quad \text{on $F(v)$.}$$ Assume $v$ touches $u^*$ strictly by above at a point $\bar x \in F(u^*) \cap F(v)$
and for notational simplicity let $\nu(%
\bar{x})= e_n$. Also, we can assume that the free boundaries $F(v)$ and $F(u^*)$ touch strictly and that \eqref{L*} holds up to $F(v)$. Otherwise, say $v^+_n(\bar x) > 0$, we replace $v$ with $\bar v(x) = v(x+ \eta |x'-\bar x'|^2 e_n) + \eta |dist(x, F(v))| - dist(x,F(v))^2$,  ($\eta$ small). Then, for a suitable $c_k\to 0$,  $v(x+c_k e_n)$ touches
by above $u_k$ at $x_k$ with $x_k \to \bar x$. Then, either for every (large) $k$ we have $%
x_k\in\Omega^+(u_k) \cup \Omega^-(u_k)$ or there exists a subsequence, that
we still call ${x_k}$, such that $x_k\in F(u_k)$ for every large $k$.

In the first case, we have
\begin{equation*}
\sum a^k_{ij}(x_k)D_{ij} v (x_k+c_k e_n)+M\geq 0.
\end{equation*}
while in the second case,
\begin{equation*}
\bar{v}_{\nu_k}^+(x_k+c_k e_n) \geq G_k(v_{\nu_k}^-(x_k+c_k e_n ),x_k%
)
\end{equation*}
and we easily reach a contradiction for $k$ large.\end{proof}

Lemma \ref{deltand} on the non-degeneracy of the positive part $\delta$-away
from the free boundary continues to hold unaltered; only choose $$w(x)= \frac{G_0(0)}{2\gamma}(1-|x|^{-\gamma}).$$
 The analogue of Lemma
\ref{normalize} is the following:

\begin{lem}
\label{loc1var} Let $u$ be a Lipschitz solution to \eqref{fbcv} in $B_1$,
with $Lip(u) \leq L$, $\|b\|_\infty, \|f\|_\infty \leq L$. For any $\varepsilon >0$ there exist $\bar \delta,
\bar r >0$  such that if
\begin{equation*}
\{x_n \leq - \delta\} \subset B_1 \cap \{u^+(x)=0\} \subset \{x_n \leq
\delta \},
\end{equation*}
with $0 \leq \delta \leq \bar \delta,$ then
\begin{equation}  \label{conclusion_beta1}
\|u - U_{\beta}\|_{L^{\infty}(B_{\bar r})} \leq \varepsilon \bar r
\end{equation}
for some $0 \leq \beta \leq L.$
\end{lem}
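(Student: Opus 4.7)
The plan is to argue by contradiction and compactness, exactly following the template of Lemma \ref{normalize}. Assume that for a fixed $\varepsilon>0$ the conclusion fails, so there exist $\delta_k\to 0$ and viscosity solutions $u_k$ to \eqref{fbcv} in $B_1$ with $Lip(u_k)\leq L$ and $\|f_k\|_\infty\leq L$ satisfying the flatness hypothesis with parameter $\delta_k$ but violating \eqref{conclusion_beta1} for every $0\leq\beta\leq L$ and every sufficiently small $\bar r$ to be chosen later. Up to a subsequence, $u_k\to u^*$ uniformly on compact subsets of $B_1$. The direct analogue of Lemma \ref{deltand}, using the barrier $w(x)=\frac{G_0(0)}{2\gamma}(1-|x|^{-\gamma})$ (whose construction only requires $G(0,\cdot)\geq\gamma_0>0$), provides non-degeneracy of $u_k^+$ at distance $2\delta_k$ from the free boundary; combined with the flatness hypothesis this forces $\{u_k^+=0\}\to B_1\cap\{x_n\leq 0\}$ in the Hausdorff distance and $(u^*)^+>0$ in $B_{\rho_0}\cap\{x_n>0\}$.

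Next I would apply Lemma \ref{compvar} to the constant sequences of coefficients and of $G$ to conclude that $u^*$ satisfies
$$|\mathcal{L}u^*|\leq L \text{ in } B_{1/2}\cap\{x_n\neq 0\}, \qquad (u^*)^+_\nu=G((u^*)^-_\nu,x) \text{ on } B_{1/2}\cap\{x_n=0\},$$
in the viscosity sense. Since $u^*$ is continuous, nonnegative for $x_n\geq 0$ and nonpositive for $x_n\leq 0$, we have $u^*\equiv 0$ on $\{x_n=0\}$. Hence on each of the sides $\{x_n>0\}$ and $\{x_n<0\}$ the function $u^*$ solves a linear uniformly elliptic nondivergence equation with $C^{0,\bar\gamma}$ leading coefficients, bounded drift and bounded right hand side, with zero Dirichlet datum on the flat piece $B_{1/2}\cap\{x_n=0\}$. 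Standard Krylov boundary estimates give $u^*\in C^{1,\gamma}(B_{1/2}\cap\{x_n\geq 0\})\cap C^{1,\gamma}(B_{1/2}\cap\{x_n\leq 0\})$ for some universal $\gamma>0$. Since $u^*(0)=0$ and the tangential gradient of $u^*$ vanishes at $0$ (because $u^*\equiv 0$ on $\{x_n=0\}$), Taylor expansion on each side yields
$$\|u^*-(\alpha x_n^+-\beta x_n^-)\|_{L^\infty(B_{\bar r})}\leq C\bar r^{1+\gamma},$$
where $\alpha=(u^*)^+_\nu(0)$ and $\beta=(u^*)^-_\nu(0)$ both lie in $[0,L]$. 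Evaluating the free boundary condition at the origin gives $\alpha=G(\beta,0)=G_0(\beta)$, so $\alpha x_n^+-\beta x_n^-$ is precisely the two-plane solution $U_\beta$ of the reference problem associated to the normalization $a_{ij}(0)=\delta_{ij}$.

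To finish, I would choose $\bar r$ so small that $C\bar r^{1+\gamma}\leq\frac{\varepsilon}{2}\bar r$ and use the uniform convergence $u_k\to u^*$ on $B_{1/2}$ to obtain $\|u_k-U_\beta\|_{L^\infty(B_{\bar r})}\leq\varepsilon\bar r$ for all $k$ large, a contradiction. The delicate step in this plan is the flat-boundary $C^{1,\gamma}$ regularity of the limit $u^*$: one needs that a viscosity solution of a nondivergence linear equation with H\"older coefficients, bounded drift and bounded right hand side, vanishing on a flat portion of the boundary, is $C^{1,\gamma}$ up to that flat portion. This is available from classical Krylov boundary estimates applied on each side separately; the nonlinear transmission condition does not couple the two one-sided regularity statements and enters only at the origin to identify the two-plane parameter via $\alpha=G_0(\beta)$.
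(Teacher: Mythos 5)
Your argument follows the same compactness/contradiction template the paper uses: extract $u^*$ as a uniform limit, use the non-degeneracy barrier built from $G_0(0)\geq\gamma_0$ and the Hausdorff convergence of the zero sets to feed Lemma~\ref{compvar}, invoke flat-boundary $W^{2,p}$/Schauder regularity on each side of $\{x_n=0\}$, Taylor-expand at the origin, identify $\alpha=G_0(\beta)$ from the limiting viscosity free boundary condition, and close by choosing $\bar r$ with $C\bar r^{1+\gamma}\leq\tfrac{\varepsilon}{2}\bar r$. This is essentially the proof in Section~\ref{section8}. One small bookkeeping slip: Lemma~\ref{compvar} is stated for pure second-order operators $\sum a^k_{ij}D_{ij}$, so before applying it you must absorb the drift into the right-hand side, writing $|\sum a_{ij}D_{ij}u_k|\leq \|f\|_\infty+\|\mathbf b\|_\infty\,Lip(u_k)\leq L+L^2$; the paper does exactly this by passing to the auxiliary problem \eqref{fbcv2} with $M=L+L^2$. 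Your statement ``$|\mathcal L u^*|\leq L$'' quietly conflates the full operator with the reduced one and misses the $L+L^2$ bound, but this does not affect the rest of the argument since any fixed $L^\infty$ bound suffices for the Krylov estimates.
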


\begin{proof}
Given $\varepsilon>0$ and $\bar r$ depending on $\varepsilon$ to be
specified later, assume by contradiction that there exist a sequence $%
\delta_k \to 0$ and a sequence of solutions $u_k$ to the
problem \eqref{fbcv2} with $M=L+L^2$, such that $Lip(u_k)\leq L$ and
\begin{equation}  \label{trapp}
\{x_n \leq - \delta_k\} \subset B_1 \cap \{u_k^+(x)=0\} \subset \{x_n \leq
\delta_k \},
\end{equation}
but the $u_k$ do not satisfy the conclusion \eqref{conclusion_beta1}.

Then, up to a subsequence, the $u_k$ converge uniformly on compact set to a
function $u^*$. In view of \eqref{trapp} and the non-degeneracy of $u_k^+,$ $
\delta_k$-away from the free boundary (see remark above), we can apply
our compactness Lemma \ref{compvar} and conclude that, for some
$\tilde{\mathcal{L}}:= \sum \tilde a_{ij} D_{ij}$ and $\tilde G$ in our class,
\begin{equation*}
|\tilde{\mathcal{L}} u^*|\leq M, \quad \text{in $%
B_{1/2} \cap \{x_n \neq 0\}$}
\end{equation*}
and
\begin{equation}  \label{FB1u*}
(u^*)_n^+= \tilde G((u^*)_n^-,x) \quad \hbox{on $F(u^*)=B_{1/2} \cap \{x_n=0\},$}
\end{equation}
in the viscosity sense, with
\begin{equation*}
u^* >0 \quad \text{in $B_{\rho_0} \cap \{x_n >0\}$}.
\end{equation*}

Thus, by $L^p$ Schauder estimates
\begin{equation*}
u^* \in C^{1,\tilde{\gamma}}(B_{1/2} \cap \{x_n \geq 0\}) \cap C^{1,\tilde{%
\gamma}}(B_{1/2} \cap \{x_n \leq 0\})
\end{equation*}
for all $\tilde{\gamma}<1$ and (for any
$\bar r$ small)
\begin{equation*}
\|u^* - (\alpha x_n^+ - \beta x_n^- )\|_{L^\infty(B_{\bar r})} \leq C(n,L)
\bar r^{1+\tilde\gamma}
\end{equation*}
with $\beta=(u^*)^-_n (0)$ and $\alpha=(u^*)^+_n(0)>0$. Thus, from \eqref{FB1u*}%
, we have $\alpha=\tilde G_0(\beta)$.

Then we reach a contradiction as in Lemma \ref{normalize}.
\end{proof}

In view of the lemma above, after proper rescaling, Theorem \ref{flatmain2}
follows from the following result.

\begin{thm}
\label{main_newvar} Let $u$ be a Lipschitz solution to \eqref{fbcv} in $B_1,$
with $Lip(u) \leq L$. There exists a universal constant $\bar \varepsilon>0$
such that, if
\begin{equation}  \label{initialas}
\|u - U_{\beta}\|_{L^{\infty}(B_{1})} \leq \bar \varepsilon\quad \text{for
some $0 \leq \beta \leq L,$}
\end{equation}
\begin{equation*}
\{x_n \leq - \bar \varepsilon\} \subset B_1 \cap \{u^+(x)=0\} \subset \{x_n
\leq \bar \varepsilon \},
\end{equation*}
and
\begin{equation*}
[a_{ij}]_{C^{0,\bar\gamma}(B_1)} \leq \bar \varepsilon,\quad \|\mathbf{b}%
\|_{L^\infty(B_1)} \leq \bar \varepsilon, \quad \|f\|_{L^\infty(B_1)} \leq
\bar \varepsilon, \end{equation*} $$[G(\eta,\cdot)]_{C^{0,\bar \gamma}(B_1)} \leq \bar \ep
, \quad \forall 0 \leq \eta \leq L,$$
then $F(u)$ is $C^{1,\gamma}$ in $B_{1/2}$.
\end{thm}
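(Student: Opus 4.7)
The plan is to mirror the proof of Theorem \ref{main_new} in Section \ref{section6} step by step, using the fact that the smallness assumptions on the coefficient norms are preserved (and actually improve) under dyadic rescaling. Under the map
$$u_k(x) = \rho_k^{-1} u(\rho_k x), \quad \rho_k = \bar r^k,$$
the rescaled operator $\mathcal L_k = \sum a_{ij}^k(x) D_{ij} + \mathbf{b}^k(x)\cdot \nabla$ has coefficients $a_{ij}^k(x) = a_{ij}(\rho_k x)$ and $\mathbf{b}^k(x) = \rho_k \mathbf{b}(\rho_k x)$, so
$$[a_{ij}^k]_{C^{0,\bar\gamma}} = \rho_k^{\bar\gamma}[a_{ij}]_{C^{0,\bar\gamma}}, \quad \|\mathbf{b}^k\|_\infty = \rho_k\|\mathbf{b}\|_\infty,$$
and similarly $\|f_k\|_\infty = \rho_k\|f\|_\infty$, $[G_k(\eta,\cdot)]_{C^{0,\bar\gamma}} = \rho_k^{\bar\gamma}[G(\eta,\cdot)]_{C^{0,\bar\gamma}}$, where $G_k(\eta,x) = G(\eta, \rho_k x)$. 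Crucially, because $\bar\gamma>0$, these norms all decay geometrically in $k$. Since $a_{ij}^k(0) = \delta_{ij}$ and $G_k(\cdot,0) = G_0$, the two-plane profile $U_\beta(t) = G_0(\beta) t^+ - \beta t^-$ remains the correct ``linear'' model at every step of the iteration.

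First I will fix a universal $\bar r$ smaller than the radii appearing in the two (to-be-established) improvement-of-flatness lemmas for \eqref{fbcvfinal} --- the analogues of Lemmas \ref{improv1} and \ref{improv4_deg} --- then choose a small universal $\tilde\ep$ depending on $\bar r$ and on the universal constants in these lemmas and in the analogue of the transition Lemma \ref{finalcase}, and set $\bar\ep := \tilde\ep^3$. An application of Lemma \ref{elementary} reduces the hypothesis \eqref{initialas} to the same dichotomy as in Section \ref{section6}: either $\beta \geq \tilde\ep$ (non-degenerate) or $\beta < \tilde\ep$ (degenerate). In the non-degenerate case, the general improvement-of-flatness gives new parameters $\beta_1, \nu_1$ with $|\nu_1 - e_n| \leq \tilde C \tilde\ep$, $|\beta_1 - \beta| \leq \tilde C \beta \tilde\ep$, and flatness halved on $B_{\bar r}$. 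By the scaling observation above, the hypotheses of the lemma are again verified for $u_1 = \bar r^{-1} u(\bar r \,\cdot)$; iterating with $\ep_k = 2^{-k}\tilde\ep$ produces a Cauchy sequence of directions $\nu_k$ with $|\nu_k - \nu_{k+1}| \leq \tilde C \ep_k$, giving the $C^{1,\gamma}$ regularity of $F(u)$ at $0$.

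In the degenerate case I iterate the degenerate improvement of flatness for $u_k^+$ so long as $\|u_k^-\|_\infty < \ep_k^2$. If this bound holds for all $k$ the conclusion follows at once; otherwise, at the first failure index $\bar k$, I invoke the general analogue of Lemma \ref{finalcase}. The argument parallels Section \ref{section6}: the harmonic replacement used in the proof there is replaced by the $\mathcal L_{\bar k-1}$-solution in $B_1\cap\{x_n < -\ep_{\bar k-1}\}$, and Hopf's lemma applies because $\mathcal L_{\bar k-1}$ is uniformly elliptic with small lower-order terms. The outcome is that $u^-_{\bar k-1}$ is of order $b\,\ep^2_{\bar k-1}(x_n)^-$ with $b$ bounded below universally, and rescaling by $\ep_{\bar k}^{1/2}$ places us in the non-degenerate regime with $\beta' \sim \ep_{\bar k}^2$, whence Case 1 applies to close the proof.

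The main obstacle is of course establishing the general improvement-of-flatness lemmas and the general transition lemma for \eqref{fbcvfinal}. The compactness/linearization of Section \ref{section5} must accommodate variable coefficients converging to $\delta_{ij}$ (using Lemma \ref{compvar}) and the nonlinear condition $u_\nu^+ = G(u_\nu^-,x)$ linearizing around $(\beta, 0)$ to the constant-coefficient transmission condition
$$G_0(\beta)\,\tilde p \;-\; \beta\, G_0'(\beta)\,\tilde q \;=\; 0 \quad \text{on } \{x_n=0\},$$
in the non-degenerate case, and to the Neumann condition $\tilde u_n = 0$ in the degenerate case. Hypothesis (H2) ensures that $G_0(\beta)>0$ so the limiting problem is genuinely a transmission problem with positive coefficients (and its $C^{1,\alpha}$ regularity follows as in Theorem \ref{linearreg}); (H1) guarantees the continuous passage to the limit in the $x$-variable; and (H3) is needed in the construction of the comparison barrier in the general analogue of Lemma \ref{main2} (and in the transition lemma), where the positive phase is tilted by $(1+\ep p)$ and the negative phase by $\ep^2 C_1$, to keep the perturbed profile a strict subsolution of the nonlinear free-boundary condition. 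Tracking these constants carefully through the iteration is the main bookkeeping task.
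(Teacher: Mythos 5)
Your proposal reconstructs exactly the scheme the paper uses in Section~\ref{section11}: exploit that rescaling $u_k(x)=\rho_k^{-1}u(\rho_k x)$ shrinks the coefficient norms geometrically ($[a_{ij}^k]_{C^{0,\bar\gamma}}=\rho_k^{\bar\gamma}[a_{ij}]_{C^{0,\bar\gamma}}$, $\|\mathbf b^k\|_\infty=\rho_k\|\mathbf b\|_\infty$, $\|f_k\|_\infty=\rho_k\|f\|_\infty$, $[G_k(\eta,\cdot)]_{C^{0,\bar\gamma}}=\rho_k^{\bar\gamma}[G(\eta,\cdot)]_{C^{0,\bar\gamma}}$), fix $\bar r$ with $\bar r^{\bar\gamma}$ small enough (the paper takes $\bar r^{\bar\gamma}<1/16$) so these stay below $\ep_k^2$ (resp.\ $\ep_k^4$) along the dyadic iteration, then run the same non-degenerate/degenerate dichotomy and transition via the variable-coefficient analogues of Lemmas~\ref{improv1}, \ref{improv4_deg}, \ref{finalcase}, reducing at each step to the linearized transmission condition $G_0(\beta)\tilde p-\beta G_0'(\beta)\tilde q=0$ exactly as in \eqref{Neumann_p_var}. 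The only small inaccuracy is in your side remark on where (H3) enters: the paper invokes the monotonicity of $\eta^{-N}G_0(\eta)$ inside the \emph{non-degenerate} Harnack barrier (Lemma~\ref{mainvar}, via $G_0(\beta\kappa)<G_0(\beta)\kappa^N$), whereas the degenerate barrier of Lemma~\ref{main2cv} (where the $(1+\ep p)$ vs.\ $\ep^2C_1$ tilting occurs) only needs (H1)--(H2).
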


\subsection{Linearized problem}

The linearized problem becomes, ($\tilde \alpha > 0$)
\begin{equation}  \label{Neumann_p_var}
\begin{cases}
\Delta \tilde u=0 & \text{in $B_\rho \cap \{x_n \neq 0\}$}, \\
\  &  \\
\tilde \alpha (\tilde u)_n^+ -\tilde \beta G^{\prime}_0( \tilde \beta) (\tilde
u)_n^-=0 & \text{on $B_\rho \cap \{x_n =0\}$},%
\end{cases}%
\end{equation}
with $\tilde{\alpha}=G_0(\tilde{\beta}).$

Setting $\zeta^2=\tilde{\alpha}$ and $\xi^2=\tilde \beta G^{\prime}_0( \tilde \beta)$ we can write the free boundary condition as
\begin{equation*}
\zeta^2 \tilde u_n^+-\xi^2 \tilde u_n^-=0.
\end{equation*}

As a consequence, all the Definitions and conclusions in Section \ref{section3} hold, in particular
Theorems \ref{linearreg}, \ref{33} and \ref{34}.

%Observe that, when $G(t)=\sqrt{1+t^2},$ we recover the free boundary
%condition in problem (\ref{fb}). 

\section{The non-degenerate case for general free boundary problems.}\label{section9}

In this section, we recover the improvement of flatness lemma in the non-degenerate case, that is when the solution is trapped between parallel two-plane solutions $U_\beta$ at $\varepsilon$
distance, with $\beta>0.$ First we need the Harnack inequality.

\subsection{Harnack inequality.}

As in Section \ref{Harnacksec}, Harnack inequality follows from the
following basic lemma.

\begin{lem}
\label{mainvar} Let $u$ be a viscosity solution to  $(\ref{fbcvfinal})$.
There exists a universal constant $\bar \varepsilon>0$ such that if $u$
satisfies
\begin{equation*}
u(x) \geq U_\beta(x), \quad \text{in $B_1$}
\end{equation*}
for $0 < \beta \leq L$ and for $0\leq\varepsilon\leq\bar{\varepsilon}$,
\begin{equation}  \label{nondvar}
\|f\|_{L^\infty(B_1)} \leq \varepsilon^2 \min\{G_0(\beta),\beta\},
\quad
\|\mathbf{b}\|_{L^\infty(B_1)} \leq \varepsilon^2,
\end{equation}
\begin{equation}  \label{noncvar}
\|G(\eta,x) - G_0(\eta)\|_{L^\infty(B_1)}\leq \varepsilon^2, \quad \forall 0\leq \eta \leq L,
\end{equation}
then, if at $\bar x=\dfrac{1}{5}e_n$
\begin{equation}  \label{u-p>ep2var}
u(\bar x) \geq U_\beta(\bar x_n + \varepsilon),
\end{equation}
then
\begin{equation}
u(x) \geq U_\beta(x_n+c\varepsilon) \quad \text{in $\overline{B}_{1/2},$}
\end{equation}
for some $0<c<1$ universal. Analogously, if
\begin{equation*}
u(x) \leq U_\beta(x) \quad \text{in $B_1$}
\end{equation*}
and
\begin{equation*}
u(\bar x) \leq U_\beta(\bar x_n - \varepsilon)
\end{equation*}
then
\begin{equation*}
u(x) \leq U_\beta(x_n - c \varepsilon) \quad \text{in $\overline{B}_{1/2}.$}
\end{equation*}
\end{lem}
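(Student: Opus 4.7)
The plan is to follow the template of Lemma \ref{main}: derive an interior Harnack gap in a small ball, introduce a radial perturbation of the free boundary via a barrier $\psi$, and slide a family of perturbed two-plane subsolutions to obtain a contradiction. I handle only the first statement; the second is symmetric.

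First, I would observe that $h := u - U_\beta$ is nonnegative in $B_1$ and solves $\mathcal{L} h = f - \alpha \mathbf{b}_n$ in the classical sense in $B_{1/10}(\bar x)\subset B_1^+(u)$, with $|\mathcal{L} h|\leq C\varepsilon^2\alpha$ by \eqref{nondvar}. Since $h(\bar x)\geq \alpha\varepsilon$ by \eqref{u-p>ep2var}, interior Harnack for the uniformly elliptic operator $\mathcal{L}$ yields
$$u(x)-U_\beta(x)\geq c_0\alpha\varepsilon \quad\text{in } \bar B_{1/20}(\bar x),$$
for a universal $c_0>0$, provided $\varepsilon\leq \bar\varepsilon$.

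Next I would introduce on the annulus $A:=B_{3/4}(\bar x)\setminus\bar B_{1/20}(\bar x)$ the radial barrier $w(x)=c(|x-\bar x|^{-\gamma}-(3/4)^{-\gamma})$ with $\gamma>n-2$ large enough that $\Delta w\geq k(n)>0$, and $c$ chosen so $w=0$ on $\partial B_{3/4}(\bar x)$, $w=1$ on $\partial B_{1/20}(\bar x)$. Uniform ellipticity gives $\sum a_{ij}D_{ij}w\geq k_1>0$ universal. I extend $w\equiv 1$ inside $\bar B_{1/20}(\bar x)$ and set $\psi:=1-w$. Then I define the sliding family
$$v_t(x):=U_\beta\bigl(x_n-\varepsilon c_0\psi(x)+t\varepsilon\bigr), \quad t\ge 0,$$
and verify that each $v_t$ is a strict comparison subsolution in the sense of Definition \ref{defsubcv}: in each phase, $\mathcal{L} v_t = \varepsilon c_0 U_\beta' \sum a_{ij}D_{ij}w + U_\beta'\mathbf{b}\cdot(e_n-\varepsilon c_0\nabla\psi) \geq \tfrac12 c_0 k_1 \varepsilon\min\{\alpha,\beta\}>\|f\|_\infty$, using $U_\beta'\in\{\alpha,\beta\}$ and $\|\mathbf{b}\|_\infty\leq\varepsilon^2$. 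At a point of $F(v_t)\cap A$ with inward normal $\nu=(e_n-\varepsilon c_0\nabla\psi)/r$, one has $(v_t)^+_\nu = \alpha r$ and $(v_t)^-_\nu = \beta r$; the explicit form of $w$ and the placement $\bar x = e_n/5$ force $\psi_n<0$ on the region containing $F(v_t)$, so $r>1$. The strict free-boundary inequality $\alpha r > G(\beta r,x)$ then follows by combining (H3) with the first-order Taylor expansion of $G_0$ guaranteed by (H1), after absorbing the $x$-dependence via \eqref{noncvar}.

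Let $\bar t$ denote the largest $t\ge 0$ for which $v_t\leq u$ on $\bar B_{3/4}(\bar x)$. The goal is to show $\bar t\geq c_0$, which will then yield $u(x)\geq v_{c_0}(x)=U_\beta(x_n+\varepsilon c_0(1-\psi(x)))\geq U_\beta(x_n+c\varepsilon)$ on $\bar B_{1/2}$ for $c$ universal. Supposing $\bar t<c_0$, the touching point $\tilde x$ lies in $\bar B_{3/4}(\bar x)$; on $\partial B_{3/4}(\bar x)$ the identity $\psi=1$ makes $v_{\bar t}<u$ strictly; the strict subsolution property together with Definition \ref{defnhsolcv} excludes interior touching in $A$ and touching on $F(v_{\bar t})\cap A$. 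Therefore $\tilde x\in\bar B_{1/20}(\bar x)$, where $\psi\equiv 0$ and $v_{\bar t}(\tilde x)=U_\beta(\tilde x_n+\bar t\varepsilon)\leq U_\beta(\tilde x)+\alpha\bar t\varepsilon<U_\beta(\tilde x)+\alpha c_0\varepsilon$, contradicting the Harnack gap from the first step. The hard part will be the free-boundary verification for $v_t$: one must extract a quantitative strict inequality $\alpha r - G(\beta r,x)\gtrsim \varepsilon$ for $r=1+O(\varepsilon)$ that absorbs both the H\"older oscillation of $G$ in $x$ and the second-order $\eta$-remainder in the Taylor expansion of $G_0$. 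In the model case of Lemma \ref{main} this reduces algebraically to $r^2>1$, but here the nonlinearity of $G$ forces direct use of the strict monotonicity built into (H3) to pin down the correct sign at the right order in $\varepsilon$.
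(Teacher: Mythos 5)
Your overall template is reasonable, and the Harnack gap at the start is correct, but the proof breaks at exactly the step you flag as "the hard part": the free-boundary verification for the family $v_t(x)=U_\beta(x_n-\varepsilon c_0\psi(x)+t\varepsilon)$ does not go through for general $G$ satisfying (H1)--(H3). With this family you must show $\alpha r>G(\beta r,x)$, where $r=|\nabla(x_n-\varepsilon c_0\psi)|>1$ on $F(v_t)\cap A$. All that (H3) gives you is $G_0(\beta r)<r^{N}G_0(\beta)=r^{N}\alpha$; for $N>1$ (which (H3) explicitly allows, and which is even required for $G_0(\eta)=\sqrt{1+\eta^2}$-type conditions with sufficiently fast growth) this is weaker than the bound $G_0(\beta r)<r\alpha$ you need. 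Equivalently, your Taylor argument requires $\beta G_0'(\beta)<G_0(\beta)$, while (H3) only guarantees $\beta G_0'(\beta)<NG_0(\beta)$. So $\alpha r-G(\beta r,x)$ need not be positive at all, let alone $\gtrsim\varepsilon$, and the contradiction argument collapses because $v_t$ is not a strict subsolution. For an explicit counterexample take $G_0(\eta)=1+\eta^2$ (so $N=2$); then for $\beta$ with $\beta^2 r>1$ you get $\alpha r<G_0(\beta r)$.

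The paper gets around this with two ingredients you don't have. First, a separate comparison/boundary-Harnack argument in the slab $B_1\cap\{x_n>0\}$ upgrades the Harnack gap into a slope estimate
\begin{equation*}
u(x)-\alpha x_n\geq \alpha c_1\varepsilon\, x_n,\quad x\in\{x_n>0\}\cap\overline B_{19/20},
\end{equation*}
for a universal $c_1>0$. Second, this extra $\varepsilon$ of positive-phase slope is fed back into a \emph{modified} sliding family,
\begin{equation*}
v_t(x)=\alpha(1+c_1\varepsilon)\bigl(x_n-\varepsilon c_0\delta\psi(x)+t\varepsilon\bigr)^+-\beta\bigl(x_n-\varepsilon c_0\delta\psi(x)+t\varepsilon\bigr)^-,\quad t\geq -c_1,
\end{equation*}
where the positive slope is inflated by $(1+c_1\varepsilon)$, the $\psi$-perturbation is reduced to scale $\delta\varepsilon$ with $\delta$ a small universal parameter, and the sliding starts at $t=-c_1$ to compensate for the slope inflation. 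With $\kappa=1+\tilde k\delta\varepsilon$ the free-boundary condition then reads $\alpha(1+c_1\varepsilon)\kappa-G_0(\beta\kappa)-\varepsilon^2>G_0(\beta)\bigl((1+c_1\varepsilon)-\kappa^N\bigr)-\varepsilon^2\geq \varepsilon G_0(\beta)\bigl(\tfrac{c_1}{2}-N\tilde k\delta\bigr)$, which is positive once $\delta<c_1/(2N\tilde k)$. Without the $(1+c_1\varepsilon)$ bump, the leading term is $G_0(\beta)(\kappa-\kappa^N)\approx -(N-1)G_0(\beta)\tilde k\delta\varepsilon<0$ for $N>1$, confirming the gap. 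You should also not claim "$\psi=1$ on $\partial B_{3/4}(\bar x)$ makes $v_{\bar t}<u$ strictly" without further work: with the inflated slope this step itself uses the estimate above, and it is precisely where the paper invokes \eqref{c1} a second time.
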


\begin{proof} We argue as in the proof of Lemma \ref{main} and we only point out the main differences.

By our assumptions, in $B_{1/10}(\bar x) \subset B_1^+(u)$, $u-U_\beta \geq 0$ solves
\begin{equation*}
\mathcal{L} (u-U_\beta)=f-\alpha b_n.
\end{equation*}
Recall that $\alpha = G_0(\beta).$ By Harnack inequality, we  obtain in $\overline B_{1/20}(\bar x)$
\begin{align*}  \label{HInewvar}
u(x) - U_\beta(x) &\geq c(u(\bar x)- U_\beta(\bar x)) - C \|f-\alpha
b_n\|_{L^\infty} \\ &\geq c(u(\bar x)- U_\beta(\bar x)) - C( \|f\|_{L^\infty}+
\alpha\|b\|_{L^\infty}).
\end{align*}

From \eqref{nondvar}, \eqref{u-p>ep2var}  and the inequality above we conclude that for $\varepsilon$ small enough,\begin{equation}  \label{u-p>cepvar}
u - U_\beta \geq \alpha c\varepsilon - \alpha C\varepsilon^2
\geq c_0\alpha \varepsilon \quad \text{in $\overline B_{1/20}(\bar x)$}.
\end{equation}

From \eqref{u-p>cepvar} and the comparison principle it follows that for $c_1$ small universal
\begin{equation}\label{c1}u -\alpha x_n \geq \alpha c_1 \eps x_n, \quad x \in \{x_n>0\} \cap \overline B_{19/20}.\ee
To prove this claim, let $\phi$ solve $$\mathcal L \phi = 0 \quad \text{in $R:= (B_1 \cap \{x_n >0\})\setminus \overline B_{1/20}(\bar x)$}$$ with boundary data
$$\phi=0 \quad \text{on $\p (B_1 \cap \{x_n >0\})$}, \quad \phi=1 \quad \text{on $\p  B_{1/20}(\bar x).$}$$ Then, by boundary Harnack $$\phi \geq c x_n \quad \text{in $\bar R \cap B_{19/20}.$}$$
We now compare $u - \alpha x_n$ with $\frac {1}{2}\alpha c_0 \phi \eps - 8\alpha \eps^2 x_n + 4 \alpha \ep^2 x_n^2$ in the domain $R$ to obtain the desired conclusion.

We now proceed similarly as in Lemma \ref{main},  with $w$ the function defined in \eqref{w}. We compute
\begin{equation*}
\begin{split}
&\sum a_{ij}D_{ij}w(x) \\
&=\gamma(\gamma+2)\mid x-\bar x\mid^{-\gamma-4}\mbox{Tr}(A(x-\bar{x})\otimes
(x-\bar{x}))-\gamma \mid x-\bar x\mid^{-\gamma-2}\mbox{Tr}(A) \\
&\geq \gamma(\gamma+2)\mid x-\bar x\mid^{-\gamma-2}n\lambda-\gamma \mid
x-\bar x\mid^{-\gamma-2}n\Lambda \\
&=\gamma\mid x-\bar x\mid^{-\gamma-2}n\left((\gamma+2)\lambda -\Lambda
\right).
\end{split}%
\end{equation*}

Then
\begin{equation*}  \label{modifiedcheck}
\begin{split}
\mathcal{L}w &\geq \gamma\mid x-\bar
x\mid^{-\gamma-2}n\left((\gamma+2)\lambda -\Lambda \right)-\gamma\mid\mid
\mathbf{b}\mid\mid_{L^{\infty}}\mid x-\bar x\mid^{-\gamma-1} \\
&=\gamma\mid x-\bar x\mid^{-\gamma-2}\left(n\left((\gamma+2)\lambda -\Lambda
\right)-\mid\mid \mathbf{b}\mid\mid_{L^{\infty}}\mid x-\bar x\mid\right) \\
&\geq \gamma\mid x-\bar x\mid^{-\gamma-2}\left(n\left((\gamma+2)\lambda
-\Lambda \right)-\mid\mid \mathbf{b}\mid\mid_{L^{\infty}}\right)\equiv
k_0(\gamma,c_0,n,\lambda,\Lambda)>0,
\end{split}%
\end{equation*}
as long as $\gamma$ satisfies
\begin{equation*}
n\left((\gamma+2)\lambda -\Lambda \right)-\mid\mid \mathbf{b}%
\mid\mid_{L^{\infty}}>0.
\end{equation*}

 Now set $\psi =1-w$ and for $x \in
\overline B_{3/4}(\bar x)
$ define
\begin{equation*}
v_t(x)= \alpha(1+c_1 \eps)(x_n - \varepsilon c_0 \delta \psi(x)+t\varepsilon)^+ -  \beta(x_n - \varepsilon c_0 \delta \psi(x)+t\varepsilon)^-, \end{equation*} with $\delta>0$ small to be made precise later, and $c_1$ the constant in \eqref{c1}.

Then, for $t=-c_1$ one can easily verify that $$v_{-c_1} \leq U_\beta \leq u, \quad  x \in
\overline B_{3/4}(\bar x).$$

Let $\bar t$ be the largest $t \geq -c_1$ such that
\begin{equation*}
v_{t}(x) \leq u(x) \quad \text{in $\overline B_{3/4}(\bar x)$},
\end{equation*} and let $\tilde x$ be the first touching point. To guarantee that $\tilde x$ cannot belong to $\p B_{3/4}$ when $\bar t < c_0 \delta$ we use \eqref{c1}. Indeed if $x \in \p B_{3/4} $ and $v_{\bar t}(x)\geq 0$ then $x_n >0$ and in view of \eqref{c1}
$$v_{\bar t}(x)= \alpha(1+c_1 \eps)(x_n -\eps c_0\delta + \bar t \eps) <  \alpha(1+c_1 \eps)x_n \leq u(x).$$ If $v_{\bar t}(x) <0$ we use that $u \geq U_\beta$ to reach again the conclusion that $v_{\bar t}(x) < u(x)$.
To proceed as in Lemma \ref{main} we now need to show that for $\bar t < c_0 \delta$, $v_{\bar t}$ is a strict subsolution in the annulus $A$.

Indeed, in $A^+(v_{\bar t})$ in view of the assumption \eqref{nondvar} and the computation above for $\mathcal{L}w$, we have
\begin{equation*}
\mathcal{L} v_{\bar t}\geq
\alpha (\varepsilon c_0 \delta k_0+b_n)
\geq
\varepsilon^2\min\{\alpha,\beta\} \geq \|f\|_{\infty}.
\end{equation*}
A similar estimate holds in $A^-(v_{\bar t}).$ Thus
\begin{equation*}
\label{laplacevvar} \mathcal{L} v_{\bar t}\geq f\, \quad \text{in $A^+(v_{\bar t}) \cup A^-(v_{\bar t})$}
\end{equation*}
for $\varepsilon$ small enough.

Also, since $\psi_n< -c$ on $F(v_{\bar t}) \cap A,$ for $\varepsilon$ small,
we have
\begin{equation*}
\kappa \equiv|e_n-\varepsilon c_0\nabla\psi|=(1-2\varepsilon c_0 \delta
\psi_n+\varepsilon^2c_0^2\delta^2|\nabla\psi|^2)^{1/2} = 1+ \tilde k \delta \eps,
\end{equation*} with $\tilde k$ between two universal constants.

Then, on $F(v_{\bar t}) \cap A,$ using \eqref{noncvar},  we can write, as long as $\ep$ is sufficiently small,
\begin{align*}
(v_{\bar{t}}^{+})_{\nu }-G((v_{\bar{t}}^{-})_{\nu },x)&=
\alpha (1+c_1\varepsilon)\kappa -G(\beta \kappa ,x) \geq \alpha (1+c_1\varepsilon
)\kappa -G_0(\beta \kappa)-\epsilon^2\\
&>(1+c_1 \varepsilon) G_0(\beta)-G_0(\beta)\kappa^N-\epsilon^2
\\
&\geq \varepsilon G_0(\beta)(\frac{c_1}{2}-N\tilde k \delta)> 0
\end{align*}
if $\delta< c_1/(2N\tilde\kappa)$.
We used that $G_0(\beta)\geq G_0(0)>0$ and that $G_0(\beta\kappa)<G_0(\beta)\kappa^N$, since $\eta ^{-N}G_{0}(\eta )$ is strictly decreasing.

 Thus, $v_{\bar t}$ is a
strict subsolution to $\eqref{fb}$ in $A$ as desired.
Hence $\bar t \geq c_0 \delta$ and we conclude as in the Laplacian case.

\end{proof}

With Lemma \ref{mainvar} at hand, Harnack Inequality and its Corollary
follow as in Section \ref{Harnacksec}. We only state the Corollary, since it is indeed the tool used in the proof of the improvement of flatness lemma in the next subsection.

\begin{cor} \label{cor_cv}Let $u$ satisfies at some point $x_0 \in B_2$
\be\label{osc_cv} U_\beta(x_n+ a_0) \leq u(x) \leq U_\beta(x_n+ b_0) \quad
\text{in $B_1(x_0) \subset B_2,$}\ee for some $0< \beta \leq L$, with
$$b_0 - a_0 \leq \ep, $$ and let \eqref{nondvar}-\eqref{noncvar} hold, for $\ep \leq \bar \ep,$ $\bar \ep$ universal. Then  in $B_1(x_0)$, ($\alpha=G_0(\beta)$)$$\tilde u_\ep(x) = \begin{cases} \dfrac{u(x) -\alpha x_n }{\alpha\ep}  \quad \text{in $B_2^+(u) \cup F(u)$} \\ \ \\ \dfrac{u(x) -\beta x_n }{\beta\ep}  \quad \text{in $B_2^-(u)$} \end{cases}$$  has a H\"older modulus of continuity at $x_0$, outside
the ball of radius $\ep/\bar \ep,$ i.e for all $x \in B_1(x_0)$, with $|x-x_0| \geq \ep/\bar\ep$
$$|\tilde u_\ep(x) - \tilde u_\ep (x_0)| \leq C |x-x_0|^\gamma.
$$
\end{cor}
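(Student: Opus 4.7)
The plan is to mirror the strategy used in Section~\ref{Harnacksec} for the Laplacian, using Lemma~\ref{mainvar} as the essential new input. First I would promote Lemma~\ref{mainvar} to a full Harnack inequality in the spirit of Theorem~\ref{HI}: if \eqref{osc_cv} holds on $B_r(x_0)$ (we may assume $r=1$ and $x_0=0$ after scaling), then one improves the oscillation to $(1-c)\ep$ on $B_{1/20}$. This is done via a three-case split based on the location of $a_0$. If $a_0<-1/5$ then $B_{1/10}\subset\{u<0\}$ and the function $v=(u-\beta(x_n+a_0))/(\beta\ep)$ satisfies $|\mathcal{L} v|\le \ep$ in $B_{1/10}$ with $0\le v\le 1$, so the classical elliptic Harnack inequality closes the case; similarly when $a_0>1/5$, working on the positive side with $\alpha$ in place of $\beta$. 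When $|a_0|\le 1/5$, one shifts by $-a_0 e_n$ and applies Lemma~\ref{mainvar} to $v(x)=u(x-a_0 e_n)$ (noting that the shifted operator still has coefficients satisfying the smallness assumptions in \eqref{nondvar}--\eqref{noncvar} up to universal constants), choosing the relevant one of the two alternatives of Lemma~\ref{mainvar} depending on whether $u(\bar x)$ lies in the upper or lower half of the allowed range at $\bar x=\tfrac{1}{5}e_n$.

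Next I would iterate. Given \eqref{osc_cv} on $B_1(x_0)$ with $b_0-a_0\le \ep$, the rescaled solution on each ball $B_{20^{-m}}(x_0)$ also satisfies the hypotheses of the Harnack step (the coefficients improve under rescaling, and the rescaled $f$, $\mathbf b$, $G-G_0$ only get smaller), so applying the step iteratively yields sequences $a_m\le a_{m+1}\le b_{m+1}\le b_m$ with
\[
U_\beta(x_n+a_m)\le u(x)\le U_\beta(x_n+b_m)\quad\text{in }B_{20^{-m}}(x_0),\qquad b_m-a_m\le(1-c)^m\ep,
\]
valid as long as $(1-c)^m 20^m\ep\le\bar\ep$, i.e.\ for all dyadic scales $r=20^{-m}$ down to $r\simeq \ep/\bar\ep$.

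Finally I would translate this geometric oscillation decay into Hölder regularity of $\tilde u_\ep$. By the very definition of $\tilde u_\ep$, the inclusion $U_\beta(x_n+a_m)\le u\le U_\beta(x_n+b_m)$ forces
\[
\operatorname{osc}_{B_{20^{-m}}(x_0)}\tilde u_\ep\;\le\;(1-c)^m\;=\;20^{-\gamma m}
\]
with $\gamma=-\log(1-c)/\log 20>0$ universal, valid down to radius $r\gtrsim \ep/\bar\ep$. A standard dyadic argument then gives, for every $x\in B_1(x_0)$ with $|x-x_0|\ge \ep/\bar\ep$,
\[
|\tilde u_\ep(x)-\tilde u_\ep(x_0)|\le C|x-x_0|^\gamma,
\]
which is the claim. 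The only delicate point to verify carefully is that in the shift argument of Case~3 of the Harnack step one still meets the assumptions of Lemma~\ref{mainvar}: the translation of $\mathcal L$ and $G(\cdot,x)$ by $-a_0 e_n$ keeps $\|\mathbf b\|_\infty$, $\|f\|_\infty$, and $\|G(\eta,\cdot)-G_0(\eta)\|_\infty$ of the same order as before (the latter costs an extra additive error bounded by $[G(\eta,\cdot)]_{C^{0,\bar\gamma}}|a_0|^{\bar\gamma}$, which is absorbed by $\bar\ep$), so the machinery applies. This is the main, though routine, bookkeeping step; the rest of the argument follows the Laplacian case verbatim.
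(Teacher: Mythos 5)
Your proposal is correct and follows precisely the route the paper intends: the paper states that with Lemma~\ref{mainvar} in hand, the Harnack inequality and Corollary~\ref{cor_cv} follow exactly as in Section~\ref{Harnacksec} (three-case split, iteration on dyadic balls, translation of the oscillation decay into H\"older decay of $\tilde u_\ep$). Your extra remark about the translation of $\mathcal L$ and $G(\cdot,x)$ in the $|a_0|\le 1/5$ case costing only an admissible $O(|a_0|^{\bar\gamma})$ error is a correct and worthwhile piece of bookkeeping that the paper leaves implicit.
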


\subsection{Improvement of flatness.}

We now extend the basic induction step towards $C^{1,\gamma}$ regularity at $%
0$. We argue as in the proof of Lemma \ref{improv1}.

\begin{lem}
\label{improv1_var}Let $u$ be solution of $(\ref{fbcv})$ and suppose that
\begin{equation}  \label{flat_1_var}
U_\beta(x_n -\varepsilon) \leq u(x) \leq U_\beta(x_n + \varepsilon) \quad
\text{in $B_1,$}
\end{equation}
with $0 < \beta \leq L,$
\begin{equation*}
\|a_{ij}-\delta_{ij}\|_{L^\infty(B_1)}\leq \varepsilon,
\quad
\|f\|_{L^\infty(B_1)} \leq \varepsilon^2\min\{G_0(\beta),\beta)\},
\quad
\|\mathbf{b}\|_{L^\infty(B_1)}\leq \varepsilon^2,
\end{equation*} and
\begin{equation*}  \label{noncvar*}
\|G(\eta,\cdot) - G_0(\eta)\|_{L^\infty(B_1)}\leq \varepsilon^2, \quad \forall 0 \leq \eta \leq L.
\end{equation*}

If $0<r \leq r_0$ for $r_0$ universal, and $0<\varepsilon \leq \varepsilon_0$
for some $\varepsilon_0$ depending on $r$, then

\begin{equation}  \label{improvedflat_2_new_var}
U_{\beta^{\prime}}(x \cdot \nu_1 -r\frac{\varepsilon}{2}) \leq u(x) \leq
U_{\beta^{\prime}}(x \cdot \nu_1 +r\frac{\varepsilon }{2}) \quad \text{in $%
B_r,$}
\end{equation}
with $|\nu_1|=1,$ $|\nu_1 - e_n| \leq \tilde C\varepsilon$, and $|\beta
-\beta^{\prime}| \leq \tilde C\beta \varepsilon$ for a universal constant $%
\tilde C.$
\end{lem}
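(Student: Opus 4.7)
The plan is to follow the three-step scheme (compactness, linearization, contradiction) used for the Laplacian case in Lemma \ref{improv1}, replacing the transmission constants $\tilde\alpha^2,\tilde\beta^2$ with $\tilde\alpha=G_0(\tilde\beta)$ and $\tilde\beta G_0'(\tilde\beta)$, and using Corollary \ref{cor_cv} and Theorem \ref{linearreg} in place of their homogeneous analogues. Assume by contradiction the existence of sequences $\eps_k\to0$, coefficients $a_{ij}^k\to\delta_{ij}$, $\mathbf{b}_k\to 0$, $f_k/\eps_k^2\min\{G_0(\beta_k),\beta_k\}\to 0$, $G_k(\eta,\cdot)\to G_0(\eta)$, and solutions $u_k$ satisfying \eqref{flat_1_var} but not \eqref{improvedflat_2_new_var}. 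Set
\[
\tilde u_k(x)=\begin{cases}\bigl(u_k(x)-\alpha_k x_n\bigr)/(\alpha_k\eps_k) & x\in B_1^+(u_k)\cup F(u_k),\\[1mm]
\bigl(u_k(x)-\beta_k x_n\bigr)/(\beta_k\eps_k) & x\in B_1^-(u_k),\end{cases}
\]
with $\alpha_k=G_0(\beta_k)$. By Corollary \ref{cor_cv} and the Hausdorff convergence $F(u_k)\to\{x_n=0\}$, Ascoli--Arzel\`a gives, up to a subsequence, $\tilde u_k\to\tilde u$ uniformly on $B_{1/2}$, $\beta_k\to\tilde\beta\in[0,L]$, and $\alpha_k\to\tilde\alpha=G_0(\tilde\beta)$.

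Next I would show that $\tilde u$ is a viscosity solution to the linearized problem \eqref{Neumann_p_var} in $B_{1/2}$. Interior harmonicity follows as in Lemma \ref{improv1} since the rescaled equation $\sum a^k_{ij}D_{ij}\tilde u_k+\mathbf{b}_k\cdot\nabla \tilde u_k =\eps_k^{-1}f_k/\alpha_k$ (or $/\beta_k$) has vanishing right-hand side in the limit, and $a^k_{ij}\to\delta_{ij}$, $\mathbf{b}_k\to 0$. For the boundary transmission condition, given a test function $\tilde\phi=A+px_n^+-qx_n^-+BQ(x-y)$ with $\tilde\alpha p-\tilde\beta G_0'(\tilde\beta) q>0$ touching $\tilde u$ strictly by below at some $x_0\in B_{1/2}\cap\{x_n=0\}$, I would mimic the construction of Lemma \ref{improv1} via the spherical profile $\Gamma_k$ of \eqref{biggammak}, but with
\[
a_k=\alpha_k(1+\eps_k p),\qquad b_k=\beta_k(1+\eps_k q),
\]
and corresponding comparison subsolution
\[
\phi_k=a_k\Gamma_k^+-b_k\Gamma_k^-+\alpha_k(d_k^+)^2\eps_k^{3/2}+\beta_k(d_k^-)^2\eps_k^{3/2}.
\]
A Taylor expansion shows the rescaling $\tilde\phi_k$ of $\phi_k$ converges uniformly to $\tilde\phi$, so a suitable vertical shift of $\phi_k$ touches $u_k$ from below near $x_0$. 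Interior strict subsolutionality follows for large $k$ from the quadratic correction, as in Lemma \ref{improv1}; at the free boundary the essential inequality becomes
\[
a_k-G_k(b_k,\cdot)=\alpha_k(1+\eps_k p)-G_0\bigl(\beta_k(1+\eps_k q)\bigr)+O(\eps_k^2)=\eps_k\bigl[\alpha_k p-\beta_k G_0'(\beta_k)q\bigr]+o(\eps_k),
\]
which is positive for $k$ large by the sign hypothesis on $\tilde\phi$ and (H1). The opposite touching case is symmetric, using (H2) and the monotonicity of $G(\cdot,x)$.

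Having identified $\tilde u$ as a viscosity solution of \eqref{Neumann_p_var} with $\|\tilde u\|_\infty\leq 1$ and $\tilde u(0)=0$, Theorem \ref{linearreg} produces $\nu'\in\mathbb{R}^{n-1}$ and numbers $\tilde p,\tilde q$ with $\tilde\alpha\tilde p-\tilde\beta G_0'(\tilde\beta)\tilde q=0$ satisfying
\[
\bigl|\tilde u-(x'\cdot\nu'+\tilde p\, x_n^+-\tilde q\, x_n^-)\bigr|\leq \bar C r^2\quad\text{in }B_r.
\]
Set $\beta_k'=\beta_k(1+\eps_k\tilde q)$, $\alpha_k'=G_0(\beta_k')$ and $\nu_k=(e_n+\eps_k(\nu',0))/|e_n+\eps_k(\nu',0)|$. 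By (H1) and the linearized identity one checks
\[
\alpha_k'=\alpha_k+\alpha_k\eps_k\tilde p+O(\eps_k^2),
\]
which is precisely the first-order compatibility needed so that, arguing as in the closing computation of Lemma \ref{improv1} (case-splitting on the sign of $x\cdot\nu_k\pm\eps_kr/2$), uniform convergence $\tilde u_k\to\tilde u$ yields, for $r\leq r_0$ and $k$ large,
\[
U_{\beta_k'}\!\bigl(x\cdot\nu_k-\tfrac{\eps_k r}{2}\bigr)\leq u_k(x)\leq U_{\beta_k'}\!\bigl(x\cdot\nu_k+\tfrac{\eps_k r}{2}\bigr)\quad\text{in }B_r,
\]
contradicting the standing assumption. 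The main obstacle I expect is precisely this final compatibility step: in the homogeneous case it amounted to the identity $\alpha^2=1+\beta^2$ surviving linearization, whereas here one needs the $C^{1,\bar\gamma}$ regularity of $G_0$ in (H1) to linearize $G_0(\beta_k(1+\eps_k\tilde q))$ and recover $\alpha_k'\approx\alpha_k+\alpha_k\eps_k\tilde p$; this is also where constructing the subsolution $\phi_k$ in Step 2 is delicate, since the correct coefficients $a_k,b_k$ must be read off from the $G_0$-linearized transmission condition rather than from the quadratic Bernoulli identity.
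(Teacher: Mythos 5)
Your proposal reproduces the paper's three-step scheme (compactness via Corollary \ref{cor_cv}, identification of the limit as a viscosity solution of \eqref{Neumann_p_var} using the comparison family $\phi_k=a_k\Gamma_k^+-b_k\Gamma_k^-+\dots$ with $a_k=\alpha_k(1+\eps_k p)$, $b_k=\beta_k(1+\eps_k q)$, then the closing computation with $\beta_k'=\beta_k(1+\eps_k\tilde q)$ and $\alpha_k'=G_0(\beta_k')\approx\alpha_k(1+\eps_k\tilde p)$), and this is precisely the paper's argument. The only (harmless) imprecision is that in the contradiction sequence the cross-sections $G_k(\cdot,0)$ may vary with $k$, so the linearized transmission coefficient should be written with a subsequential $C^1$-limit $\tilde G_0$ of $G_k(\cdot,0)$ rather than with the original $G_0$; everything in your proof goes through verbatim once that substitution is made.
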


\begin{proof}
We divide the proof in 3 steps.

\vspace{2mm}

\textbf{Step 1 -- Compactness.} \smallskip
We keep the same notation of  Lemma \ref{improv1}. In this case, the sequence  $u_k$ is
a solution of problem (\ref{fbcv}) for operators
\begin{equation*}
\mathcal{L}^k = \sum_{ij}{a_{ij}}^kD_{ij} + \mathbf{b}^k\cdot \nabla
\end{equation*} with  ($\alpha_k=G_k(\beta_k,0)$)\begin{equation*}
\|a^k_{ij}-\delta_{ij}\|_{L^\infty}\leq \varepsilon_k, \quad
\|f_k\|_{L^\infty} \leq \varepsilon_k^2\min\{\alpha_k,\beta_k\}, \quad
\|\mathbf{b}^k\|_{L^\infty}\leq \varepsilon_k^2,
\end{equation*} and
\begin{equation}  \label{noncvar**}
\|G_k(\eta,\cdot) - G_k(\eta,0)\|_{\infty}\leq \varepsilon_k^2, \quad \forall 0 \leq \eta \leq L.
\end{equation}

The normalized functions $\tilde{u}_k$ are
defined by the same formula.
Up to  a subsequence, $G_k(\cdot,0)$ converges, locally uniformly, to some $C^1$-function $\tilde G_0$, while
$\beta_k\to \tilde \beta$ so that $\alpha_k\to \tilde \alpha
=\tilde G_0(\tilde{\beta}).$
Moreover, by Corollary \ref{cor_cv} the graphs of $\tilde{u}_k$ converge in the Hausdorff distance to a H\"{o}lder continuous $\tilde u.$

\vspace{2mm}

\textbf{Step 2 -- Limiting Solution.}
We show that $\tilde u$ solves
\begin{equation}  \label{Neumann_var}
\begin{cases}
\Delta \tilde u=0 & \text{in $B_{1/2} \cap \{x_n \neq 0\}$}, \\
\  &  \\
\tilde \alpha \tilde u_n^+ - \tilde{\beta}\tilde G_0^{\prime}(\tilde\beta) \tilde u_n^-=0 & \text{%
on $B_{1/2} \cap \{x_n =0\}$}.%
\end{cases}%
\end{equation}
\smallskip

We can write, say in $%
\Omega^+(u^k),$ (in  $\Omega^-(u^k)$ replace $\alpha_k$ with $\beta_k$)
\begin{equation*}
\sum a^k_{ij}D_{ij}\tilde{u}_k=\frac{1}{\alpha_k\varepsilon_k} \sum a_{ij}^{k} D_{ij}u_k=%
\frac{1}{\alpha_k\varepsilon_k} ( -\alpha_k\mathbf{b}^k\cdot\nabla u_k+f^k)\equiv
F^k,
\end{equation*}
where $|F^k|\leq C\varepsilon_k.$

Thus
\begin{equation*}
\Delta\tilde{u}_k=
\sum_{i,j=1}^n(\delta_{ij}-a_{ij}^k)  D_{ij}\tilde u_k+F^k.
\end{equation*}
Hence recalling that $\| a_{ij}^k - \delta_{ij} \|_\infty \leq \varepsilon_k,$ and
from interior $L^p$ Schauder estimates for second derivatives, we conclude
that, for instance, $\Delta\tilde{u}_k\to 0$ in $L^p$ on every compact set contained
in $\Omega^+(\tilde{u}^k)$ or in $\Omega^-(\tilde{u}^k).$  This shows that $%
\tilde{u}$ is harmonic in $B_{1/2} \cap \{x_n \neq 0\}$.

Next, we prove that $\tilde u$ satisfies the transmission condition in %
\eqref{Neumann_var} in the viscosity sense.

Again we argue by contradiction. Let $\tilde \phi$ be a function of the form
\begin{equation*}
\tilde \phi(x) = A+ px_n^+- qx_n^- + B Q(x-y)
\end{equation*}
with
\begin{equation*}
Q(x) = \frac 1 2 [(n-1)x_n^2 - |x^{\prime}|^2], \quad y=(y^{\prime},0),
\quad A \in , B >0
\end{equation*}
and
\begin{equation*}
\tilde{\alpha} p- \tilde \beta \tilde G_0^{\prime}(\tilde{\beta}) q>0,
\end{equation*} and assume  that $\tilde \phi$ 
touches $u$ strictly from below at a point $x_0= (x_0^{\prime}, 0) \in B_{1/2}$. As in Lemma \ref{improv1}, let
\begin{equation*}
\phi_k=a_k\Gamma_k^+(x)-b_k\Gamma_k^-(x)+\alpha_k(d_k^+(x))^2%
\varepsilon_k^{3/2}+\beta_k(d_k^-(x))^2\varepsilon_k^{3/2},
\end{equation*}
where, we recall,
\begin{equation*}
a_k=\alpha_k(1+\varepsilon_k p), \quad b_k=\beta_k(1+\varepsilon_k q)
\end{equation*}
and $d_k(x)$ is the signed distance from $x$ to $\partial B_{\frac{1}{%
B\varepsilon_k}}(y+e_n(\frac{1}{B\varepsilon_k}-A\varepsilon_k)).$ Moreover,
\begin{equation*}
 \psi_k(x)=\phi_k(x+\varepsilon_k c_ke_n)
\end{equation*}
touches $u_k$ from below at $x_k,$ with $c_k \to 0, x_k \to x_0.$

We get a contradiction if we prove that $\psi_k$ is a strict subsolution to
our free boundary problem, that is
\begin{equation*}  \label{fbpsi}
\left \{
\begin{array}{ll}
\mathcal{L}^k \psi_k > f_k, & \hbox{in
$B_1^+(\psi_k) \cup B_1^-(\psi_k),$} \\
\  &  \\
(\psi_k^+)_\nu - G_k((\psi_k^-)_\nu,x) >0, & \hbox{on $F(\psi_k)$.} \\
&
\end{array}
\right.
\end{equation*}

We have
\begin{equation*}
|\nabla\Gamma_k|\leq C, |D_{ij}\Gamma_k|\leq C\varepsilon_k
\end{equation*}
and $|a_{ij}-\delta_{ij}|\leq \varepsilon_k$. We can write, $k$ large
enough, say, in the positive phase of $\psi_k$, 
\begin{align*}
\mathcal L_k \psi_k&=(\mathcal{L}^k-\Delta) \psi_k+\Delta \psi_k
\geq
-C\alpha_k\varepsilon_k^2 + \alpha_k\varepsilon_k^{3/2}\mathcal{L}^kd_k^2(x+\varepsilon c_k e_n)\\
&\geq c\min\{\alpha_k,\beta_k\}\varepsilon_k^{3/2}\geq \|f_k\|_{L^\infty}
\end{align*}
and the first condition is satisfied. An analogous estimate holds in the
negative phase.

Finally, since on the zero level set $|\nabla \Gamma_k|=1$ and $|\nabla
d^2_k|=0$ the free boundary condition reduces to showing that
\begin{equation*}
a_k-G_k(b_k,x) >0.
\end{equation*}
Using the definition of $a_k, b_k$ we need to check that
\begin{equation*}
\alpha_k(1+\varepsilon_kp) -G_k(\beta_k(1+\varepsilon_k q),x) >0.
\end{equation*} From \eqref{noncvar**}, it suffices to check that
\begin{equation*}
\alpha_k(1+\varepsilon_kp) -G_k(\beta_k(1+\varepsilon_k q),0) -\ep_k^2 >0.
\end{equation*}
This inequality holds for $k$ large in view of the fact that
\begin{equation*}
\tilde\alpha p -\tilde\beta \tilde G^{\prime}_0(\tilde\beta) q >0.
\end{equation*}

Thus $\tilde u$ is a viscosity solution to the linearized problem.

\vspace{2mm}

\textbf{Step 3 -- Contradiction.} According to estimate \eqref{lr}, since $%
\tilde u(0)=0$ we obtain that

\begin{equation*}
|\tilde u - (x^{\prime}\cdot \nu^{\prime}+ px_n^+ -qx_n^-)| \leq C r^2,
\quad x\in B_r,
\end{equation*}
with
\begin{equation*}
\tilde \alpha p -\tilde \beta \tilde G_0^{\prime}(\tilde{\beta}) q=0, \quad
|\nu^{\prime}| = |\nabla_{x^{\prime}} \tilde u (0)| \leq C.
\end{equation*}

Thus, since $\tilde u_k$ converges uniformly to $\tilde u$ (by slightly
enlarging $C$) we get that

\begin{equation*}  \label{ukest_var}
|\tilde u_k - (x^{\prime}\cdot \nu^{\prime}+ px_n^+ -qx_n^-)| \leq C r^2,
\quad x\in B_r.
\end{equation*}

Now set,

\begin{equation*}
\beta^{\prime}_k = \beta_k(1+\varepsilon_k q), \quad \nu_k = \frac{1}{%
\sqrt {1+ \varepsilon_k^2 |\nu^{\prime}|^2}}(e_n + \varepsilon_k
(\nu^{\prime}, 0)).
\end{equation*}

Then,
\begin{align*}
\alpha^{\prime}_k &= G_k(\beta_k(1+\varepsilon_k q),0)
=G_k(\beta_k,0)+\beta_k G_k^{\prime}(\beta_k,0)\varepsilon_k q + O(\varepsilon_k^2)\\
&=\alpha_k(1+\beta_k\frac{G_k^{\prime}(\beta_k,0)}{\alpha_k}q\varepsilon_k)+
O(\varepsilon_k^2) =\alpha_k(1+\varepsilon_kp)+ O(\varepsilon_k^2)
\end{align*}
since from the identity  $\tilde \alpha p -\tilde{\beta}\tilde G_0^{\prime}(\tilde{\beta})q=0$ we derive that
\begin{equation*}
\beta_k\frac{G_k^{\prime}(\beta_k,0)}{\alpha_k}q = p + O(\varepsilon_k).
\end{equation*}
Moreover
\begin{equation*}
\nu_k =e_n + \varepsilon_k (\nu^{\prime}, 0) + \varepsilon_k^2 \tau, \quad
|\tau|\leq C.
\end{equation*}

With these choices it follows as in Lemma \ref{improv1} that (for $k$ large and $r \leq r_0$)

\begin{equation*}
\widetilde{U}_{\beta^{\prime}_k}(x\cdot \nu_k -\varepsilon_k\frac r 2) \leq
\tilde u_k(x) \leq \widetilde{U}_{\beta^{\prime}_k}(x\cdot \nu_k
+\varepsilon_k\frac r 2), \quad \text{in $B_r$}
\end{equation*}
which leads to a contradiction.
\end{proof}

\section{The degenerate case for general free boundary problems.}\label{section10}

In this section, we recover the improvement of flatness lemma in the degenerate case, that is when the negative part of $u$ is negligible and
the positive part is close to a one-plane solution (i.e. $\beta=0,\alpha=G_0(0)$). First we need the Harnack inequality.

\subsection {Harnack inequality.}

As in Section \ref{Harnacksec}, Harnack inequality in the degenerate case follows from the
following basic lemma.

\begin{lem}\label{main2cv}There exists
a universal constant $\bar \ep>0$ such that if $u$ satisfies \begin{equation*} u^+(x) \geq U_0(x), \quad \text{in $B_1$}\end{equation*}
with
\be\label{dgcv} \|u^-\|_{L^\infty} \leq \ep^2, \quad \|{\bf b}\|_{L^\infty} \leq \ep^2 \quad \|f\|_{L^\infty} \leq \ep^4,\ee \be\label{cgcv}\|G(\eta,\cdot)- G_0(\eta)\|\leq \ep^2, \quad  0 \leq \eta \leq C\ep^2\ee
 then if at $\bar x=\dfrac{1}{5}e_n$ \be\label{u-p>ep2dcv}
u^+(\bar x) \geq U_0(\bar x_n + \ep), \ee
then \be u^+(x) \geq
U_0(x_n+c\eps) \quad \text{in $\overline{B}_{1/2},$}\ee
for some
$0<c<1$ universal. Analogously, if $$u^+(x) \leq U_0(x) \quad \text{in $B_1$}$$ and $$ u^+(\bar x) \leq U_0(\bar x_n - \eps)$$ then $$ u^+(x) \leq U_0(x_n - c \ep) \quad
\text{in $\overline{B}_{1/2}.$}$$
\end{lem}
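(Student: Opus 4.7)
I plan to adapt the strategy of Lemma \ref{main2} to the general operator $\mathcal{L}$ and nonlinear free boundary condition $G$, importing the barrier modifications from Lemma \ref{mainvar}.  Write $\alpha_0 = G_0(0) \geq \gamma_0 > 0$, so that $U_0(x) = \alpha_0 x_n^+$.  The goal is to construct a sliding family of one-sided strict comparison subsolutions $v_t$ with $v_0 \leq u$ on $\overline{B}_{3/4}(\bar x)$ and to push $\bar t$, the largest $t$ with $v_t \leq u$, past a universal threshold $c_0 \delta$; this will yield $u^+ \geq U_0(x_n + c\ep)$ on $\overline{B}_{1/2}$.  The second (upper) inequality of the lemma is handled by a symmetric argument with a sliding supersolution.

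Since $u \geq U_0 > 0$ on $B_{1/10}(\bar x)$, the difference $u - \alpha_0 x_n$ is non-negative there and satisfies $\mathcal{L}(u - \alpha_0 x_n) = f - \alpha_0 b_n$; interior Harnack for $\mathcal{L}$ together with \eqref{u-p>ep2dcv} and \eqref{dgcv} yields $u - \alpha_0 x_n \geq c_0 \alpha_0 \ep$ on $\overline{B}_{1/20}(\bar x)$ for a universal $c_0>0$.  In parallel, I need the refined decay $u^- \leq C\,\ep^2 x_n^-$ on $B_{19/20}$, proved exactly as in \eqref{negu} by comparing $u^-$ (which satisfies $|\mathcal{L}u^-| \leq \ep^4$ on $\{u<0\}$) with the barrier $W$ solving $\mathcal{L}W = -\ep^4$ on $B_1 \cap \{x_n<0\}$ with boundary datum $u^-$: since $\{u<0\} \subset \{x_n \leq 0\}$ and $\|u^-\|_{L^\infty} \leq \ep^2$, the Hopf lemma on the flat face $\{x_n=0\}$ gives the linear-in-$x_n^-$ control with $C$ universal.

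Borrowing the barrier from Lemma \ref{mainvar}, let $w(x) = c_w(|x-\bar x|^{-\gamma} - (3/4)^{-\gamma})$ on the annulus $A = B_{3/4}(\bar x) \setminus \overline{B}_{1/20}(\bar x)$, with $\gamma$ large (depending on $\lambda, \Lambda, n, \|\mathbf{b}\|_{L^\infty}$) so that $\mathcal{L}w \geq k_0 > 0$ on $A$ and $w=1$ on $\partial B_{1/20}(\bar x)$, extended by $1$ inside; set $\psi = 1-w$.  I define
\be
v_t(x) = \alpha_0 \bigl(x_n - \ep c_0 \delta \psi + t\ep\bigr)^+ - C_1 \ep^2 \bigl(x_n - \ep c_0 \delta \psi + t\ep\bigr)^-,
\ee
with $C_1 \geq C$ and $\delta \in (0,1)$ a small universal parameter to be fixed.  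Using $u^+ \geq \alpha_0 x_n^+$ in the positive phase and the refined negative estimate in the negative phase, exactly as in Lemma \ref{main2}, one verifies $v_0 \leq u$ on $\overline{B}_{3/4}(\bar x)$, so $\bar t \geq 0$ is well defined.

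The main obstacle is excluding the first touching point $\tilde x$ from $\partial B_{3/4}(\bar x)$ and from the annulus $A$.  The outer sphere is ruled out since $w \equiv 0$ there, combined with $u^+ \geq \alpha_0 x_n^+$ and $u^- \leq C\ep^2 x_n^-$; the interior inequality $\mathcal{L}v_{\bar t} > \|f\|_{L^\infty}$ on $A^\pm(v_{\bar t})$ follows from $\mathcal{L}w \geq k_0$ as soon as $\alpha_0 c_0 \delta k_0 \ep \gg \ep^4$.  The delicate point is the free boundary inequality on $F(v_{\bar t}) \cap A$: the outward unit normal to $\{v_{\bar t}>0\}$ is $\nu = \kappa^{-1}(e_n - \ep c_0 \delta \nabla \psi)$, and since $\psi_n < -c_2 < 0$ on $F(v_{\bar t}) \cap A$ one gets $\kappa \geq 1 + c_3 \ep \delta$ for a universal $c_3>0$.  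Thus $(v_{\bar t}^+)_\nu = \alpha_0 \kappa$, $(v_{\bar t}^-)_\nu = C_1 \ep^2 \kappa$, and using \eqref{cgcv} together with $G_0 \in C^{1,\bar\gamma}$ one writes
\be
G\bigl((v_{\bar t}^-)_\nu, x\bigr) \leq G_0(C_1 \ep^2 \kappa) + \ep^2 \leq \alpha_0 + M\ep^2,
\ee
reducing the strict inequality to $\alpha_0 c_3 \delta \ep > M\ep^2$, which holds for $\ep \leq \bar\ep$ once $\delta$ is fixed.  This forces $\tilde x \in \overline{B}_{1/20}(\bar x)$, where the first-step estimate $u - \alpha_0 x_n \geq c_0 \alpha_0 \ep$ contradicts $\bar t < c_0\delta$.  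Note that hypothesis (H3), essential in Lemma \ref{mainvar}, is not required here: since the negative phase of $v_t$ is already of order $\ep^2$, the bound $G_0(C_1\ep^2\kappa) \leq \alpha_0 + O(\ep^2)$ is supplied by a direct Taylor expansion of $G_0$ around $\eta=0$, and no monotonicity of $\eta^{-N}G_0$ is invoked.
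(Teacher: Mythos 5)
Your proposal is correct and follows essentially the same route as the paper: the paper's own proof of this lemma simply transplants the argument of Lemma~\ref{main2} to the general setting, replacing the radial barrier $w$ by the $\mathcal{L}$-adapted one from Lemma~\ref{mainvar} so that $\mathcal{L}w\geq k_0>0$, and then checking the interior inequality $\mathcal{L}v_{\bar t}>\|f\|_\infty$ and the free boundary inequality $(v_{\bar t}^+)_\nu>G((v_{\bar t}^-)_\nu,x)$ by exactly the Taylor-expansion-around-$\eta=0$ computation you describe. Two small remarks: the extra parameter $\delta$ you carry over from Lemma~\ref{mainvar} is harmless but not actually needed here, because in the degenerate case the gain in the free boundary inequality is $O(\ep)$ while the loss from $G_0(C_1\ep^2\kappa)-G_0(0)$ is only $O(\ep^2)$, so there is no competition between like orders as there was in the non-degenerate case; and in the interior estimate the term you need to dominate is the drift contribution $\alpha_0|b_n|=O(\ep^2)$, not just $\|f\|_\infty=O(\ep^4)$, but this is still comfortably beaten by $\alpha_0 c_0\delta k_0\,\ep$, so the conclusion is unchanged. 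Your observation that (H3) plays no role in the degenerate lemma is accurate and consistent with the paper's calculation.
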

\begin{proof} The proof is the same as for the model case in Lemma \ref{main2}. To prove that
$$v_{\bar t}(x)= G_0(0)(x_n - \ep c_0 \psi +  \bar t \ep)^+ -   \ep^2 C_1(x_n - \ep c_0 \psi(x) + \bar t \ep)^-, \quad x \in \overline B_{3/4}(\bar x)
$$ is a subsolution in the annulus $A$ we use the following computation
 \begin{align*}
\mathcal{L} v_{\bar t}&\geq c_0C_1\varepsilon^3 \mathcal{L}w - C_1\varepsilon^2 |b_n| \geq \ep^3 K(n,\lambda, \Lambda) > \ep^4 \geq \|f\|_{\infty}, \quad \textrm{in
$A^+(v_{\bar t}) \cup A^-(v_{\bar t})$}
\end{align*} for $\ep$ small enough. Here we have used as in Lemma \ref{mainvar} that $\mathcal Lw \geq k_0>0.$

Moreover, on $F(v_{\bar t}) \cap A$ we have $$
(v_{\bar t}^+)_\nu - G((v_{\bar t}^-)_\nu)=
G_0(0)|e_n-\ep c_0 \nabla \psi|-G(\ep^2 C_1|e_n-\ep c_0 \nabla \psi|,x) \geq C \ep | \psi_n|+O(\ep^2)  > 0$$ as long as $\ep$ is small enough.
\end{proof}

We state here the Corollary that can be deduced by the degenerate Harnack Inequality.

\begin{cor} \label{corollary4cv}Let $u$ satisfies at some point $x_0 \in B_2$
\be\label{osc_cv_dg} U_0(x_n+ a_0) \leq u(x) \leq U_0(x_n+ b_0) \quad
\text{in $B_1(x_0) \subset B_2,$}\ee
with
$$b_0 - a_0 \leq \ep, $$ and let \eqref{dgcv}-\eqref{cgcv} hold
for $\ep \leq \bar \ep,$ $\bar \ep$ universal. Then  in $B_1(x_0)$
$$\tilde
u_\ep:= \frac{u^+(x) - G_0(0)x_n}{\ep G_0(0)}$$ has a H\"older modulus of continuity at $x_0$, outside
the ball of radius $\ep/\bar \ep,$ i.e for all $x \in B_1(x_0)$, with $|x-x_0| \geq \ep/\bar\ep$
$$|\tilde u_\ep(x) - \tilde u_\ep (x_0)| \leq C |x-x_0|^\gamma.
$$
\end{cor}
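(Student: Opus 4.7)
The argument mirrors that of Corollary~\ref{corollary4} in the model case, which in turn follows Corollary~\ref{corollary}. I would proceed in three steps: upgrade Lemma~\ref{main2cv} to a dyadic Harnack-type dichotomy, iterate it, and then repackage the geometric decay as a H\"older estimate for $\tilde u_\ep$.

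\emph{Step 1 (one-step Harnack at scale $r$).} I would first establish the analogue of Theorem~\ref{HIdg}: whenever
\begin{equation*}
U_0(x_n + a_0) \leq u^+(x) \leq U_0(x_n + b_0) \quad \text{in } B_r(x_0) \subset B_2
\end{equation*}
with $b_0 - a_0 \leq \ep r$ and \eqref{dgcv}--\eqref{cgcv} in force, there exist $a_0 \leq a_1 \leq b_1 \leq b_0$ with $b_1 - a_1 \leq (1-c)\ep r$ and the improved trapping on $B_{r/20}(x_0)$, for a universal $c \in (0,1)$. As in the proof of Theorem~\ref{HI}, I would split into three cases according to the size of $|a_0|$ relative to $r$. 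If $|a_0| > r/5$ then $B_{r/10}(x_0)$ lies entirely in a single phase of $u$ and the claim is a direct consequence of the Krylov--Safonov Harnack inequality applied to the suitably normalized defect. If $|a_0| \leq r/5$, the rescaling $v(y) = u(x_0 + ry)/r$ produces a solution on $B_1$ to a free boundary problem of the same form, with data $\mathbf b^r(y) = r\,\mathbf b(x_0+ry)$, $f^r(y) = r f(x_0+ry)$, $G^r(\eta,y) = G(\eta, x_0+ry)$. Since $r \leq 1$, the factors of $r$ only help in $\mathbf b$ and $f$, while $\|G^r(\eta,\cdot) - G_0\|_\infty$ is pointwise and hence preserved; Lemma~\ref{main2cv} applied to $v$ then yields the one-step improvement after rescaling back.

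\emph{Step 2 (iteration) and Step 3 (H\"older estimate).} Iterating Step~1 from $r = 1$ produces, by induction,
\begin{equation*}
U_0(x_n + a_m) \leq u^+(x) \leq U_0(x_n + b_m) \quad \text{in } B_{20^{-m}}(x_0), \qquad b_m - a_m \leq (1-c)^m \ep,
\end{equation*}
for every $m$ with $(1-c)^m 20^m \ep \leq \bar\ep$. This is exactly the bound producing the ``outside $B_{\ep/\bar\ep}(x_0)$'' restriction: the iteration halts at scale $r_m \sim \ep/\bar\ep$. Choosing $\gamma = -\log(1-c)/\log 20 \in (0,1)$ universal, the oscillation of
\begin{equation*}
\tilde u_\ep(x) = \frac{u^+(x) - G_0(0)\,x_n}{\ep\,G_0(0)}
\end{equation*}
on $B_{20^{-m}}(x_0)$ is at most $(1-c)^m = 20^{-\gamma m}$; telescoping over the admissible dyadic scales gives $|\tilde u_\ep(x) - \tilde u_\ep(x_0)| \leq C|x-x_0|^\gamma$ for every $x \in B_1(x_0)$ with $|x-x_0| \geq \ep/\bar\ep$, as claimed.

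\emph{Main obstacle.} The one nontrivial point is the bookkeeping in Step~1: checking that at each dyadic level the rescaled solution still satisfies all of \eqref{dgcv}--\eqref{cgcv} with parameter $\ep_m = (1-c)^m 20^m \ep$. The source and drift scale favorably, and the condition on $G - G_0$ is invariant; the delicate hypothesis is $\|u^-\|_\infty \leq \ep_m^2$, which does not scale naturally. Unfolding the rescaling, it reduces to the compatibility $(1-c)^2 \geq 20^{-1}$, automatically satisfied since the constant $c$ produced by Lemma~\ref{main2cv} is a small universal Harnack-type constant. Once this compatibility is recorded, the rest of the proof is mechanical and identical to the model Laplacian case.
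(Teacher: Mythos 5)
Your plan tracks the paper's (largely implicit) route exactly: the paper itself states Corollary~\ref{corollary4cv} as a deduction from the degenerate Harnack inequality obtained from Lemma~\ref{main2cv}, referring back to the model-case argument after Theorem~\ref{HI} and~\ref{HIdg}, so your three-step scheme (one-step Harnack dichotomy at scale $r$ via rescaling and Lemma~\ref{main2cv}, dyadic iteration, repackaging as a H\"older bound with $\gamma=-\log(1-c)/\log 20$) is the intended one.

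The point you single out in your ``Main obstacle'' paragraph is genuine, and you resolve it correctly. After rescaling $v_m(y)=u(x_0+20^{-m}y)/20^{-m}$, the condition on $v_m^-$ reads $\|v_m^-\|_\infty\leq 20^m\ep^2$, while the hypothesis needed to apply Lemma~\ref{main2cv} with parameter $\ep_m=(1-c)^m 20^m\ep$ is $\|v_m^-\|_\infty\leq\ep_m^2=(1-c)^{2m}20^{2m}\ep^2$, so what is required is $\bigl((1-c)^2\cdot 20\bigr)^m\geq 1$, i.e.\ $(1-c)^2\geq 1/20$ — precisely your compatibility. The drift, the source, and the $G$-perturbation improve with the rescaling (the $G$-condition on a smaller punctured ball around $x_0$ is actually strengthened, not merely ``invariant'' as you write, and the reference value $G(\cdot,x_0)$ differs from $G(\cdot,0)$ only by $O(\ep^2)$ under \eqref{cgcv}, which is harmless), so $u^-$ is indeed the only hypothesis with an unfavorable factor. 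And, as you note, $(1-c)^2\geq 1/20$ is automatic since one may always decrease the universal constant coming out of the Harnack step in Lemma~\ref{main2cv}. This makes explicit a step the paper leaves to the reader with the phrase ``we can argue as in the nondegenerate case''; recording it is the right thing to do and it closes the only nonobvious gap. The remainder (the three-case split as in the proof of Theorem~\ref{HI}, the translation by $a_0e_n$ in the middle case, the dyadic telescoping and the stopping scale $\ep/\bar\ep$) is routine and correctly stated.
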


\subsection{Improvement of flatness} We prove here the improvement of flatness in the degenerate setting.
Recall that in this case one improves the flatness of $u^+$ only.

\begin{lem}
\label{improv4}Let $u$ satisfy
\begin{equation}  \label{flat_var}
U_0(x_n -\varepsilon) \leq u^+(x) \leq U_0(x_n + \varepsilon) \quad \text{in $B_1,$} \quad 0\in F(u),
\end{equation}
with
\begin{equation*}
\|a_{ij}-\delta_{ij}\|\leq \varepsilon,
\quad
\|f\|_{L^\infty(B_1)} \leq \varepsilon^4,
\quad
\|\mathbf{b}\|_{L^\infty(B_1)} \leq \varepsilon^2,
\end{equation*}
\begin{equation*}
\|G(\eta,\cdot) - G_0(\eta)\|_{L^\infty} \leq \ep^2, \quad 0 \leq \eta \leq C\ep^2,
\end{equation*}
and
\begin{equation*}
\|u^-\|_{L^\infty(B_1)} \leq \varepsilon^2.
\end{equation*}

If $0<r \leq r_1$ for $r_1$ universal, and $0<\varepsilon \leq \varepsilon_1$
for some $\varepsilon_1$ depending on $r$, then
\begin{equation}  \label{improvedflat_2_var}
U_0(x \cdot \nu_1 -r\frac{\varepsilon}{2}) \leq u^+(x) \leq U_0(x \cdot
\nu_1 +r\frac{\varepsilon }{2}) \quad \text{in $B_r,$}
\end{equation}
with $|\nu_1|=1,$ $|\nu_1 - e_n| \leq C\varepsilon$ for a universal constant
$C.$
\end{lem}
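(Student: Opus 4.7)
The plan is to mirror the proof of Lemma \ref{improv4_deg} from Section \ref{section5}, incorporating the variable-coefficient modifications already developed for Lemma \ref{improv1_var} in the non-degenerate case.

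\emph{Step 1 (Compactness).} I will argue by contradiction: assume there exist sequences $\ep_k \to 0$, operators $\mathcal L^k$, right-hand sides $f_k$, boundary maps $G_k$ and solutions $u_k$ of the corresponding problem $(\ref{fbcv})$ satisfying the hypotheses with $\ep = \ep_k$, but failing $(\ref{improvedflat_2_var})$. Set
\[ \tilde u_k(x) = \frac{u_k(x) - G_k(0,0)\,x_n}{\ep_k\,G_k(0,0)}, \qquad x \in B_1^+(u_k)\cup F(u_k). \]
The degenerate Harnack Corollary \ref{corollary4cv} gives uniform H\"older continuity of $\tilde u_k$ outside an $\ep_k/\bar\ep$-neighborhood of $F(u_k)$; combined with the Hausdorff convergence $F(u_k)\to\{x_n=0\}$, Arzel\`a--Ascoli produces (along a subsequence) a H\"older continuous limit $\tilde u$ on $\bar B_{1/2}\cap\{x_n\ge 0\}$. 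Along a further subsequence $G_k(\cdot,0)\to \tilde G_0$ in $C^1_{\rm loc}$ with $\tilde G_0(0)>0$, and $a^k_{ij}\to\delta_{ij}$.

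\emph{Step 2 (Linearized problem).} I will show that $\tilde u$ is a viscosity solution of
\begin{equation*}
\Delta \tilde u = 0 \ \text{in } B_{1/2}\cap\{x_n>0\}, \qquad \tilde u_n = 0 \ \text{on } B_{1/2}\cap\{x_n=0\}.
\end{equation*}
Interior harmonicity is proved exactly as in Step 2 of Lemma \ref{improv1_var}: rewrite $\sum a^k_{ij}D_{ij}\tilde u_k = F^k$ with $|F^k|=O(\ep_k)$ (absorbing $\mathbf b^k\cdot\nabla u_k$ and $f_k$), and use $\|a^k_{ij}-\delta_{ij}\|_\infty\le\ep_k$ together with interior $L^p$ Schauder estimates. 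For the Neumann condition, argue by contradiction against a test polynomial $\tilde\phi(x)=A+px_n+B\,Q(x-y)$ with $p>0$. Following Lemma \ref{improv4_deg}, build
\[ \phi_k(x) = a_k\,\Gamma_k^+(x) + K\ep_k^2\bigl(d_k^+(x)\bigr)^2, \qquad a_k = G_k(0,0)(1+\ep_k p), \]
with $\Gamma_k, d_k$ as in that lemma and $K$ a large universal constant chosen to dominate the $\mathbf b^k$ and $(a^k_{ij}-\delta_{ij})$ error terms. Since $\tilde\phi_k\to\tilde\phi$ uniformly, a translate $\psi_k(\cdot)=\phi_k(\cdot+\ep_k c_k e_n)$ touches $u_k$ from below at some $x_k$, which (by the strict interior subsolution property of $\mathcal L^k$) must lie on $F(u_k)\cap F(\psi_k)$. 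Extend by
\[ \Psi_k(x) = \begin{cases} \psi_k(x), & x\in \mathcal B, \\ c\ep_k^2\bigl(3d(x,\p\mathcal B)+d^2(x,\p\mathcal B)\bigr), & x\notin \mathcal B, \end{cases} \]
where the comparison estimate $u_k^- \le c\ep_k^2 \, d(\cdot,\p\mathcal B)^-$ near $x_k$ is the analogue of \eqref{negu} obtained via comparison with the $\mathcal L^k$-solution of $\mathcal L^k w=-\ep_k^4$ in a neighborhood. The contradiction then comes from the strict free boundary inequality
\[ (\Psi_k^+)_\nu - G_k\bigl((\Psi_k^-)_\nu, x_k\bigr) = a_k\bigl(1+O(\ep_k)\bigr) - G_k\bigl(O(\ep_k^2), x_k\bigr) > 0, \]
which holds for $k$ large because $p>0$ is fixed, $a_k - G_k(0,0) = G_k(0,0)\ep_k p$, and $\|G_k(\eta,\cdot)-G_k(\eta,0)\|_\infty = O(\ep_k^2)$ uniformly for $\eta\in[0,C\ep_k^2]$ by hypothesis.

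\emph{Step 3 (Contradiction).} Even reflection across $\{x_n=0\}$ converts the limiting problem into a harmonic function on $B_{1/2}$, yielding $|\tilde u(x)-x'\cdot\nu'|\le\bar Cr^2$ on $B_r\cap\{x_n\ge 0\}$ with $|\nu'|\le C$ (since $\tilde u(0)=0$ and $\tilde u_n(0)=0$). Setting $\nu_1=(e_n+\ep_k(\nu',0))/\sqrt{1+\ep_k^2|\nu'|^2}$ and mimicking the final step of Lemma \ref{improv4_deg}, one deduces
\[ U_0(x\cdot\nu_1-r\ep_k/2)\le u_k^+(x)\le U_0(x\cdot\nu_1+r\ep_k/2) \quad\text{in }B_r \]
for $k$ large and $r\le r_1$ universal, contradicting the assumed failure of $(\ref{improvedflat_2_var})$. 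The delicate point is Step 2: one must simultaneously absorb the drift $\mathbf b^k$ in the interior (handled by taking the $\ep_k^2(d_k^+)^2$ coefficient $K$ large, which is permitted because $\|f_k\|_\infty \le \ep_k^4 \ll \ep_k^2$), the $x$-oscillation of $G_k$ on the free boundary (manageable since it is $O(\ep_k^2)$ while $p>0$ supplies $\ep_k$-room), and the non-negligible negative phase (controlled by the Hopf-type estimate \eqref{negu}, reproved here with $\mathcal L^k$ in place of $\Delta$).
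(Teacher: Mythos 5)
Your proposal is correct and follows essentially the same three-step (compactness / linearized problem / contradiction) scheme the paper uses, adapting Lemma \ref{improv4_deg} with the variable-coefficient devices of Lemma \ref{improv1_var}. The only notable variation is cosmetic: you take the perturbing term to be $K\ep_k^2(d_k^+)^2$ with $K$ large universal, while the paper uses $\ep_k^{3/2}(d_k^+)^2$ (so that its Laplacian already dominates the $O(\ep_k^2)$ drift and coefficient-oscillation errors without introducing a large constant); both choices give a strict subsolution and vanish in the rescaled limit, so they are interchangeable.
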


\begin{proof}

\textbf{Step 1 -- Compactness.} As in Lemma \ref{improv4_deg}, it follows from Corollary \ref{corollary4cv} that as $\ep_k \rightarrow 0$ the graphs of the
\begin{equation*}
\tilde{u}_{k}(x)= \dfrac{u_k(x) -G_k(0,0)x_n}{G_k(0,0) \varepsilon_k}, \quad x \in
B_1^+(u_k) \cup F(u_k)
\end{equation*}converge (up to a
subsequence) in the Hausdorff distance to the graph of a H\"older
continuous function $\tilde{u}$ over $B_{1/2} \cap \{x_n \geq 0\}$. Here the $u_k$ solve our free boundary problem \eqref{fbcv} with coefficients $a_{ij}^k, {\bf b}^k$, right-hand-side $f_k$ and free boundary condition $G_k$ satisfying the assumptions of the lemma for a sequence of $\ep_k$'s going to 0.

\smallskip

\textbf{Step 2 -- Limiting Solution.} One shows that $\tilde u$
solves the following Neumann problem
\begin{equation}  \label{Neumann4_var}
\begin{cases}
\Delta \tilde u=0 & \text{in $B_{1/2} \cap \{x_n > 0\}$}, \\
\  &  \\
\tilde u_n =0 & \text{on $B_{1/2} \cap \{x_n =0\}$}.%
\end{cases}%
\end{equation}

We can easily adapt the proof of Lemma \ref{improv4_deg}, choosing 
$$\phi_k(x)= a_k \Gamma^+_k(x)+ (d_k^+(x))^2\ep_k^{3/2}, \quad a_k=G_k(0,0)(1+\ep_k p).$$
and 
 \begin{equation}\Psi_k(x) = \begin{cases}\phi_k(x + c_k\eps_k e_n) & \text{in $\mathcal B$}\\  \ & \ \\ c\eps^2_k (3d(x,\p \mathcal B) + d^2(x,\p \mathcal B)) & \text{outside of $\mathcal B,$}\end{cases}\ee with $$\mathcal B: =B_{\frac{1}{B\ep_k}}(y+e_n(\frac{1}{B\ep_k}-A\ep_k-\eps_kc_k)).$$ 
To check the subsolution condition at the free boundary for the function $\Psi_k(x)$, we need that 
$$(\Psi_k^+)_\nu>G_k((\Psi_k^-)_\nu,x)  , \quad \hbox{on $F(\Psi_k)$.}
$$

This is equivalent to show that, for $k$ large,

\begin{equation*}
G_k(0,0)(1+\varepsilon_k p) -G_k(c \varepsilon_k^2, x)> 0.
\end{equation*}
Since $p>0$, this follows immediately from the assumptions on $G_k$.
\smallskip

\textbf{Step 3 -- Contradiction.} In this step we can argue as in the final
step of the proof of Lemma 4.1 in \cite{D}.
\end{proof}

\section{Proofs of the main theorems for general free boundary problems} \label{section11}

The proof of Theorem \ref{flatmain2} and Theorem \ref{Lipmainvar} follow the
same scheme of the model case. In particular, for Theorem \ref{flatmain2}, we take care of choosing $\bar{r}^{\bar\gamma}<1/16$, say, while the other assumptions on $\bar r$ remain the same. Also, $\tilde \eps$ may have to be smaller, depending on $\gamma_0$. The dichotomy degenerate/nondegenerate is handled through Lemma \ref{finalcase} which extends to the variable coefficients case, with minor changes in the proof.

In the proof of Theorem \ref{Lipmainvar}, the blow-up limit $\tilde u$ solves the following global homogeneous two-phase free
boundary problem

\begin{equation}  \label{fbglobalvar}
\left \{
\begin{array}{ll}
\Delta \tilde u = 0, & \hbox{in $\{\tilde u>0\} \cup \{\tilde u \leq 0\}^0$}
\\
\  &  \\
\tilde u_\nu^+ =G_0 (\tilde u_\nu^-), & \hbox{on $F(\tilde u):= \p \{\tilde
u>0\},$} \\
&
\end{array}%
\right.
\end{equation}

 Now, Lemma \ref{Liouville} holds with identical proof for the free boundary condition $U_{\nu}^+=G_0(U_{\nu}^-)$, so that the proof of Theorem \ref{Lipmainvar} does not present any further difficulty.

\end{document}